\newtheorem{theorem}{Theorem}[section]
\newtheorem{lemma}{Lemma}[section]
\newtheorem{corollary}{Corollary}[section]
\newtheorem{proposition}{Proposition}[section]
\newcommand{\diam}[1]{\mathrm{diam}(#1)}
\newcommand{\ext}{C(D,s)}
\newcommand{\prnt}[1]{\left( #1 \right)}
\newcommand{\norm}[1]{\left\|#1\right\|}
\newcommand{\normL}[2]{\norm{#1}_{L^2\prnt{#2}}}
\newcommand{\normHp}[3]{\norm{#1}_{H^{#2}\prnt{#3}}}
\newcommand{\normI}[2]{\norm{#1}_{C\prnt{#2}}}
\newcommand{\normLH}[3]{\norm{#1}_{L^2(\Omega,H^{#2}\prnt{#3})}}
\newcommand{\lebesgue}{\mathcal{L}^{(d)}}
\newcommand{\mc}[1]{\mathcal{#1}}
\newtheorem{assumption}{Assumption}[section]
\newcommand{\om}{\omega}
\newcommand{\nsamples}{M}
\newcommand{\nsamplesrun}{m}
\newcommand{\kllevel}{L}
\newcommand{\kllevelrun}{\ell}
\newcommand{\Vh}{\mathcal{V}_h}
\newcommand{\dimVh}{Q_h}
\newcommand{\dimVhrun}{k}
\newcommand{\spn}{\text{span}}
\renewcommand{\vec}[1]{\boldsymbol{#1}}
\newcommand{\lnrf}{\kappa}
\newcommand{\lnrfrv}{K}
\newcommand{\prob}{\mathbb{P}}
\newcommand{\dom}{\mathcal{D}}
\newcommand{\R}{\mathbb{R}}
\newcommand{\N}{\mathbb{N}}
\newcommand{\taperingint}{\tau}
\newcommand{\taperingweight}{w}
\newcommand{\transpose}{T}
\newcommand{\CholStiffnessL}{\vec{L}^{(h)}}
\newcommand{\stiffnessmatrix}{\vec{S}^{(h)}}
\newcommand{\stiffnessmatrixM}{\vec{S}^{(h;\nsamples)}}
\newcommand{\tildestiffnessmatrixM}{\widetilde{\vec{S}}^{(h;\nsamples)}}
\newcommand{\tildstiffnessmatrix}{\widetilde{\vec{S}}^{(h)}}
\newcommand{\massmatrix}{\vec{M}^{(h)}}
\newcommand{\sigmaalg}{\mathcal{F}}
\newcommand{\constC}{C_{\infty}} 	
\newcommand{\constHs}{C_{H^{s}}}
\newcommand{\expec}{\mathbb{E}}
\newcommand{\normc}[2]{\left\|#1 \right\|_{C(#2)}}
\newcommand{\expect}[1]{\mathbb{E}\left[#1 \right] }
\newcommand{\decayclass}{\mathcal{F}}
\title{On the Numerical Approximation of the Karhunen-Lo\`{e}ve Expansion for Random Fields with Random Discrete Data
}
\author[1]{Michael Griebel}
\author[2]{Guanglian Li}
\author[3]{Christian Rieger}
\affil[1]{Institut f\"ur Numerische Simulation,
            Universit\"at Bonn,
            Friedrich-Hirzebruch-Allee 7, 53115 Bonn, Germany and Fraunhofer SCAI, Schloss Birlinghoven, 53754 Sankt Augustin, Germany. (griebel@ins.uni-bonn.de)}
\affil[2]{Department of Mathematics, The University of Hong Kong, Pokfulam Road, Hong Kong. (lotusli@maths.hku.hk)}
\affil[3]{Philipps-Universit\"at Marburg, Fachbereich Mathematik und Informatik, AG Numerik,
Hans-Meerwein-Stra\ss{}e 6, 35032 Marburg. (riegerc@mathematik.uni-marburg.de)}
\date{\today}
\begin{document}
\maketitle
\begin{abstract}
In many applications, random fields reflect uncertain parameters, and often their moments are part of the modeling process and thus well known. However, there are practical situations where this is simply not the case. Therefore, we do not assume that we know
moments or expansion terms of the random fields, but only have discretized samples of them. The main contribution of this paper concerns the approximation of the true covariance operator from these finite measurements. We derive explicit error estimates that include the finite-rank approximation error of the covariance operator, the Monte Carlo-type error for sampling in the stochastic domain, and the numerical discretization error in the physical domain. For this purpose, we use modern tapering covariance estimators adapted to high-dimensional applications, where the dimension is introduced by the resolution of the measurement process. This allows us to give sufficient conditions on the three discretization parameters to guarantee that the error is kept below a prescribed accuracy $\varepsilon$. 
\end{abstract}

\begin{center}
Keywords:
covariance operators, eigenvalue decay, approximation of Gaussian-type random fields, error estimates, Galerkin methods for eigenvalues, finite elements, tapering estimators for sample covariance
\end{center}

\begin{center}
MSCcodes:
41A25, 41A35, 60F10, 65D40
\end{center}

\section{Introduction}\label{sec:intro}
Mathematical models with random coefficients or random input data have been widely used to describe applications
that are affected by some degree of uncertainty due to incomplete or insufficient information subject to some degree of uncertainty about the problem. The range of applications is broad and diverse, including uncertainty quantification with ordinary differential equations, partial differential equations, and integro-differential equations, or problems in machine learning and data analysis in oil field modeling, quantum mechanics, or finance.

Often, the random data is modeled as a Gaussian random field over a probability space $(\Omega,\sigmaalg,\prob)$. Thus, the random field is 
described by its mean and its covariance. By subtracting its mean, we can further assume that the random field is centered. 
For numerical simulations, realizations of this random field must then be derived.

In contrast to this common approach, we intend to estimate the statistical information from experimental data. For this purpose, we  assume that there is an unknown centered Gaussian field $\lnrf$, which is
consistent with the covariance kernel 
\begin{align}\label{carlemannkern}
\text{Cov}_{\lnrf}(\vec{x},\vec{x}^{\prime})
&=:R(\vec{x},\vec{x}^{\prime})=\int_{\Omega}\lnrf (\vec{\om},\vec{x}) \lnrf (\vec{\om},\vec{x}^{\prime})\, \mathrm{d}\prob(\vec{\om}) 
=\sum_{\kllevelrun \in \N} \lambda_{\ell}\sum_{k=1}^{N(\kllevelrun,k)} \phi_{\kllevelrun,k}(\vec{x}) \phi_{\kllevelrun,k}(\vec{x^{\prime}})
\nonumber \\&= \sum_{\kllevelrun=1}^{\infty} \lambda_{\kllevelrun} \phi_{\kllevelrun} (\vec{x})\phi_{\kllevelrun}(\vec{x}^{\prime}) \approx \sum_{\kllevelrun=1}^{L} \lambda_{\kllevelrun} \phi_{\kllevelrun} (\vec{x})\phi_{\kllevelrun}(\vec{x}^{\prime})=: R^{\kllevel}(\vec{x},\vec{x}^{\prime})
\end{align}
where we have the eigenvalues sorted in strictly decreasing order, i.e. $\lambda_1 > \lambda_2 \dots \ge 0$ for the first series expansion, and we understand the last (notationally simpler) expression as repeating the eigenvalues according to their multiplicity. This will be used from now on for simplicity of notation.
The last sum is usually called the finite noise approximation. This approximation, and in particular the truncation parameter $L$, is justified by a fast decay of the eigenvalues in the Mercer-type expansion. 

For our analysis we assume spatial regularity of the random field $\lnrf$. We have:
\begin{assumption}\label{A:11} Let $\lnrf:\Omega \times \dom \to \R$ and thus $\lnrfrv:\Omega \to \R^{\dom}$ be  contained in the Bochner space
$L^{\infty}(\Omega, H^{s}(\dom))$ for some  $s\ge 0$, where for $s=0$ we have $H^{0}(\dom)=L^{2}(\dom)$.
Alternatively, let $\lnrf:\Omega \times \dom \to \R$ and hence $\lnrfrv:\Omega \to \R^{\dom}$ be contained in the Bochner space
$L^{\infty}(\Omega, H^{s}(\dom))$ for some $s>\frac{d}{2}$.
\end{assumption}
We refer to \cite[Theorem 5.2]{steinwart2017} for conditions on $\lnrf$ that allow for such types of bounds.

We also assume some moment bounds:
\begin{assumption}\label{A:00}
Let $\lnrf:\Omega \times \dom \to \R$, we assume
$\constC :=\left(\int_{\Omega} \normc{\lnrf\left(\vec{\om} , \cdot\right)}{\dom}^2  \, \mathrm{d}\prob(\vec{\om})\right)^{\frac{1}{2}}<\infty$.
Sometimes we need the stronger assumption
$\constHs :=\left(\int_{\Omega} \normHp{\lnrf\left(\vec{\om} , \cdot\right)}{s}{\dom}^2  \, \mathrm{d}\prob(\vec{\om})\right)^{\frac{1}{2}}<\infty$ for later use.
\end{assumption}
These estimates imply that
$\left| \expect{\lnrf(\cdot,\vec{x})}\right| \le \constC$ and $\text{Cov}(\vec{x},\vec{x}^{\prime})\le 4\constC^2$ by H\"older's inequality.
Since our analysis is aimed at practical applications, we consider mostly discretized function spaces. For the remainder of this article, we will focus on finite element spaces of the form
\begin{align}\label{eq:FEspace}
\Vh:=\{v\in H^{1}(\dom):v|_{K}\in P^{
\lceil s \rceil }(K) \text{ for all } K\in \mc{T}_h\} \subset \R^{\dom},
\end{align}
where $\mathcal{T}_{h}$ is a regular quasi-uniform triangulation over the
physical domain $\dom$ with a maximal mesh size $h$ and $ \lceil s \rceil  \in \N$ depends on the spatial regularity of $\lnrf$.

We denote the dimension of $\Vh$ by $\dimVh <\infty$. Furthermore, we expect $\dimVh = \mathcal{O}(h^{-d}s^d)$.
For the rest of the paper we will fix some arbitrary basis $\Vh=\spn\left\{\theta_{\dimVhrun}^{(h)} \ : \  1 \le \dimVhrun \le \dimVh \right\}$.
Let $\vec{\theta}^{(h)}$ be a generic basis giving rise to the following vector field
$\vec{\theta}^{(h)}: \dom \to   \R^{\dimVh}$, with $ \vec{x} \mapsto \left(\theta^{(h)}_{1}(\vec{x}),\dots,\theta^{(h)}_{\dimVh}(\vec{x}) \right)^{\transpose}$.
To fix the notation: any function $v^{(h)} \in \Vh$ can be identified with a vector $\vec{V}^{(h)} = \left( V^{(h)}_{1},\dots, V^{(h)}_{\dimVh} \right)^{\transpose}\in \R^{\dimVh}$ via
$v^{(h)}= \vec{V}^{(h)}\cdot \vec{\theta}^{(h)}$.

Next, we consider the $L^2$ projection map $\Pi_{\Vh}:L^{2}(\dom) \to \Vh$ and note that its approximation rate is given in \cite[Theorem 4.4.20]{MR2373954} as
\begin{align}
\normL{v-\Pi_{\Vh}v}{\dom}&\leq C_{\Pi_{\Vh;L^2}} h^{s}\normHp{v}{s}{\dom} \text{ for all } v\in H^s(\dom),\label{eq:approxL2}\\
\normI{v-\Pi_{\Vh}v}{\dom}&\leq C_{\Pi_{\Vh;C}}h^{s-\frac{d}{2}}\normHp{v}{s}{\dom} \text{ for all } v\in H^s(\dom) \text{ for } s>d/2. \label{eq:approxLinfty}
\end{align}
Here, the positive constants $C_{\Pi_{\Vh;L^2}}$ and $C_{\Pi_{\Vh;C}}$ depend only on the shape regularity parameter of $\mathcal{T}_h$ and are independent of the mesh size $h$. 
Since we mostly deal with discretized functions, we consider 
\begin{equation*}
\lnrf^{(h)}\left(\vec{\om} , \cdot\right):=\Pi_{\Vh} \lnrf(\vec{\om},\cdot) = \vec{\lnrfrv}^{(h)}(\vec{\omega}) \cdot \vec{\theta}^{(h)}.
\end{equation*}
As in the continuous case, this implies
 $\expect{\lnrf^{(h)}(\cdot, \vec{x})}\le\constC^{(h)}$ and $\text{Cov}^{(h)}(\vec{x},\vec{x}^{\prime})\le 4(\constC^{(h)})^2$ for all $\vec{x},\vec{x}^{\prime} \in \dom$, where $\constC^{(h)} :=\left(\int_{\Omega} \normc{\lnrf^{(h)}\left(\vec{\om} , \cdot\right)}{\dom}^2  \, \mathrm{d}\prob(\vec{\om})\right)^{\frac{1}{2}}<\infty$.
Analogously, 
$C^{(h)}_{H^s} :=\left(\int_{\Omega} \norm{\lnrf^{(h)}\left(\vec{\om} , \cdot\right)}_{H^s(\dom)}^2  \, \mathrm{d}\prob(\vec{\om})\right)^{\frac{1}{2}}<\infty$.
Furthermore, the only theoretically available covariance is given in the centered case as
\begin{align}
\label{truecovariance}
\vec{\Sigma}_{\vec{\lnrfrv}^{(h)}}
=\expect{\vec{\lnrfrv}^{(h)} \otimes \vec{\lnrfrv}^{(h)} } \in \R^{\dimVh \times \dimVh}.
\end{align}
Note that this matrix is large and its dimension depends on the spatial discretization parameter.

Next, we assume that we have access to $\nsamples$ independent identically distributed discrete samples  $\vec{\lnrfrv}^{(1;h)}:=\vec{\lnrfrv}^{(h)}(\vec{\om}_1), \dots, \vec{\lnrfrv}^{(\nsamples;h)}:=\vec{\lnrfrv}^{(h)}(\vec{\om}_M) $. The eigensystem of this matrix gives rise to a computable Mercer-like expansion
\begin{align}\label{mercerh3}
R^{(h;M)}(\vec{x},\vec{x}^{\prime})& :=\sum_{\kllevelrun=1}^{\dimVh}\lambda^{(h;M)}_{\kllevelrun}
\phi_{\kllevelrun}^{(h;M)}(\vec{x})\phi_{\kllevelrun}^{(h;M)}(\vec{x}^{\prime}) \approx R^{(\kllevel;h;M)}(\vec{x},\vec{x}^{\prime})\nonumber \\&:= \sum_{\kllevelrun=1}^{\kllevel}\lambda^{(h;M)}_{\kllevelrun}
\phi_{\kllevelrun}^{(h;M)}(\vec{x})\phi_{\kllevelrun}^{(h;M)}(\vec{x}^{\prime}),
\end{align}
where the  $(h;M)$--notation indicates that we are using an estimator based on the $\nsamples$  finite samples $\vec{\lnrfrv}^{(1;h)}, \dots, \vec{\lnrfrv}^{(\nsamples;h)}$. In addition, $\kllevel$ indicates the finite-noise approximation. 

Since $\dimVh \sim h^{-d}$, the summation in \eqref{mercerh3} is subject to the well-known curse of dimensionality. Consequently, the estimation of the covariance matrix becomes more difficult the smaller the values of $h$ become. Therefore, we assume that the covariance matrix $\vec{\Sigma}_{\vec{\lnrfrv}^{(h)}} \in \R^{\dimVh \times \dimVh}$ has a certain off-diagonal decay, i.e. $\vec{\Sigma}_{\vec{\lnrfrv}^{(h)}} \in \decayclass_{\alpha}=\decayclass_{\alpha}(C_{\decayclass;1},C_{\decayclass;2}),$ where the class is defined later in \eqref{taperingclass}.
Here, $\alpha$ modulates the decay rate and $ C_{\decayclass;1},C_{\decayclass;2}$ are positive constants.
This particular class of matrices contains several relevant examples of discrete covariance functions, see \cite{bickel2008} and the survey \cite{cai2016}.
In addition, the choice of this class of matrices also affects the approximation of continuous covariance functions by so-called tapering weights, see \eqref{taperingweights}. We will use the optimal rates derived in \cite{bickel2008}.

The main goal of this article is to derive a bound on the expected approximation error \\$\expect{\|R-R^{(\kllevel;h;M)}\|_{L^{2}(\dom \times \dom)}}$.
To do this, we couple the discretization parameters $\kllevel, h, \nsamples $ and consider their limits $h\to 0,\kllevel \to \dimVh \to \infty, \nsamples  \to \infty$ to recover the true continuous covariance operator. 
Our 
main result is 
\cref{thm:final2}. It states that 
\begin{align*}
\expect{\norm{{R}-R^{(\kllevel;h;\nsamples)}}_{L^{2}(\dom\times\dom)}} &\lesssim  \kllevel^{-\frac{2s}{d}-\frac{1}{2}}  + \left( \kllevel^{\frac{1}{2}}+ G(\kllevel) \right) \tilde{\rho}^{\frac{1}{2}}_{h}(\nsamples) \lambda_{\max}\left(\massmatrix\right) \nonumber\\
	&+  L^{\frac{1}{2}} h^{-d} \exp\left(- \nsamples  \rho_1 H(\kllevel)\lambda_{\max}^{-2} \left(\massmatrix\right)  \right),
\end{align*}
where $G(\kllevel)$ and $H(\kllevel)$ are functions of the truncation parameter $\kllevel$, which depend on the spectral properties \eqref{HL} of the true operator,  $\tilde{\rho}_{h}(\nsamples)$ measures the approximation quality of our (tapering) estimator, $\massmatrix$ is a classical finite element mass matrix, and $\rho_1$ denotes a constant given in the proof of \cref{prop:spectralgap} which is determined by the sub-Gaussian property of the  random variables involved.
This allows for sufficient conditions on the three discretization parameters to guarantee that an error below a prescribed accuracy $\varepsilon$ is achieved.

The remainder of this paper is organized as follows: In 
\cref{sec:prelim}, we introduce the notation and review some basic facts. In \cref{sec:contkl}, we recall the KL-expansion and sharp eigenvalue estimates. In \cref{sec:apprcov}, we present the reconstruction of the covariance matrix as a stiffness matrix with respect to the finite element discretization. We focus on the discrete spatial approximation in \cref{sec:spatial} and the statistical approximation in \cref{subsec:approxcovhM}. In \cref{sec:tails}, we review some tail estimates for Gaussian random variables.
In \cref{subsec:approxcovhMfinal}, we give bounds on the sampling covariance error. In \cref{sec:covoprec}, we present our final error estimates for the covariance operator reconstruction and give sufficient conditions on the three discretization parameters to ensure that the error is below a prescribed accuracy $ \varepsilon$. We give some concluding remarks in \cref{sec:conclusion}.

\subsection{Notation and basic facts}\label{sec:prelim}
We start with some notation. Let two Banach spaces $V_1$ and $V_2$ be given. Then, $\mathcal{B}(V_1,V_2)$ stands for the Banach space composed of all continuous linear operators from $V_1$ to $V_2$ and $\mc{B}(V_1)$ stands for $\mc{B}(V_1, V_1)$. The set of non-negative integers is denoted by  $\mathbb{N}$. For each index $\alpha\in \mathbb{N}^d$, $|\alpha|$ is the
sum of its components. The letters $\kllevel$, $\nsamples$ and $h$ are reserved for the truncation number of the KL modes, the number of sampling points and the mesh size. We write $A\lesssim B$ if $A\leq cB$ for some absolute constant $c$ which is independent of $\kllevel$, $\nsamples$ and $h$, and we also write $A\gtrsim B$. Furthermore, for any $s\in \mathbb{N}$, $1\leq p\leq \infty$, we follow \cite{Adam78} and 
use the classical Sobolev spaces, see \cite{MR2373954}. 
The space $W_{0}^{s,p}(D)$ is the closure of $C^{\infty}_{0}(\dom)$ in $W^{s,p}(D)$. Its dual space is $W^{-s,q}(\dom)$, with ${1}/{p}+{1}/{q}=1$. We also use $H^{s}(\dom)=W^{s,p}(\dom)$ for $p=2$. Finally, $(\cdot,\cdot)_{\dom}$ denotes the inner product in $L^2(\dom)$.

\section{Karhunen-Lo\`{e}ve expansion: Continuous level}\label{sec:contkl}
This section deals with the Karhunen-Lo\`{e}ve expansion of a centered Gaussian random field $\lnrf$. Let $\lebesgue(\vec{x})$ be the Lebesgue measure on the physical domain $\dom$. For simplicity, $L^{2}(\dom)$ and $L^2(\Omega)$ are short for $L^{2}(\dom;\mathrm{d}\lebesgue(\vec{x}))$ and $L^{2}(\Omega;\mathrm{d}\prob)$. We denote the associated integral operator $\mathcal{S}: L^{2}(\dom)\rightarrow L^{2}(\Omega)$ by
\begin{align}\label{eq:S}
\mathcal{S}: L^{2}(\dom)\rightarrow  L^{2}(\Omega), \quad \mathcal{S}v=\int_{\dom}\lnrf (\vec{\om},\cdot)v\, \mathrm{d}\lebesgue \text{ and } \mathcal{S}^{*}v=\int_{\Omega}\lnrf(\vec{\om},\cdot)v\, \mathrm{d}\prob(\vec{\om}).
\end{align}
where $\mathcal{S}^{*}: L^{2}(\Omega)\rightarrow L^{2}(\dom)$ is the adjoint operator. Let $\mathcal{R}: L^2(\dom)\rightarrow L^2(\dom)$ be defined by  $\mathcal{R}:=\mathcal{S}^{*}\mathcal{S}$.
Then $\mathcal{R}$ is a nonnegative self-adjoint Hilbert-Schmidt operator with kernel
$R\in L^2(\dom \times \dom)$, given in \eqref{carlemannkern}, i.e. 
$R(\vec{x},\vec{x}^{\prime})=\int_{\Omega}\lnrf (\vec{\om},\vec{x}) \lnrf (\vec{\om},\vec{x}^{\prime})\, \mathrm{d}\prob(\vec{\om}) = 
\expec\left[\lnrf (\cdot,\vec{x}) \lnrf (\cdot,\vec{x}^{\prime}) \right]$.
The standard spectral theory for compact operators \cite{yosida78} implies that the operator $\mathcal{R}$ has at most countably many discrete eigenvalues, with zero being the only accumulation point, and each non-zero eigenvalue having only finite multiplicity. Let $\{\lambda_{\kllevelrun}\}_{\kllevelrun=1}^{\infty}$ be the sequence of eigenvalues (with multiplicity counted) associated with
$\mathcal{R}$, which are non-increasingly ordered, and let $\{\phi_{\kllevelrun}\}_{\kllevelrun=1}^\infty$ be the corresponding eigenfunctions which are orthonormal in $L^2(\dom)$.
Furthermore, for any $\lambda_{\kllevelrun}\neq 0$, define
  $\psi_{\kllevelrun}(\vec{\om}):=\frac{1}{\sqrt{\lambda_{\kllevelrun}}}\int_{\dom}\lnrf(\vec{\om},\vec{x})
  \phi_{\kllevelrun} \, \mathrm{d}\lebesgue(\vec{x})$.
One can verify that the sequence $\{\psi_{\kllevelrun}\}_{{\kllevelrun}=1}^\infty$ is uncorrelated and
orthonormal in $L^2(\Omega)$ and therefore $\{\psi_{\kllevelrun}\}_{{\kllevelrun}=1}^{\infty}$ are i.i.d normal random functions.
The KL expansion of the bivariate function $\lnrf$ then refers to the expression
\begin{equation}
\label{eq:KL}
\lnrf(\vec{\om},\vec{x})=\sum\limits_{\kllevelrun=1}^{\infty}\sqrt{\lambda_{\kllevelrun}}\phi_{\kllevelrun}(\vec{x})\psi_{\kllevelrun}(\vec{\om}),
\end{equation}
where the series converges in $L^{2}(\Omega) \otimes L^{2}(\dom )\cong L^2( \Omega\times D)$.

\subsection{$\kllevel$-term truncation in case of continuous Karhunen-Lo\'{e}ve expansion}
Now we will truncate the KL expansion and discuss the resulting error. There are numerous articles on the $\kllevel$-term KL approximation for random fields. For example, in \cite{Schwab:2006:KAR:1167051.1167057} decay rates for the eigenvalues of covariance kernels with certain regularity were considered and generalized fast multipole methods for solving the associated eigenvalue problems were studied. The robust computation of  eigenvalues for smooth covariance kernels was treated in \cite{Todor2006}. A comparison of the $\kllevel$-term KL truncation and the sparse grid approximation was given in \cite{Griebel.Harbrecht:2017}.

The result of this section is based on the paper \cite{griebel2017decay}, which proves a sharp eigenvalue decay rate under a mild assumption on the regularity of the bivariate function $\lnrf $ in the physical domain. 
\cref{A:11} yields the following eigenvalue decay estimate \cite[Theorems 3.2, 3.3, and 3.4]{griebel2017decay} and will be used repeatedly.
\begin{theorem}
\label{thm:truncationError}
Assume that 
\cref{A:11} holds with $s>d/2$. Then, it holds
\begin{align}
\label{thm:truncationErrorLambda}
{{\lambda_{\kllevelrun}}}&\le C_{\ref{thm:truncationError}}\kllevelrun^{-\frac{2s}{d}-1} \quad \text{ for all $\kllevelrun\ge 1$  and }\\
\Big\|{\sum\limits_{\kllevelrun>\kllevel}\sqrt{\lambda_{\kllevelrun}}
\phi_{\kllevelrun}\psi_{\kllevelrun}}\Big\|_{L^2(\Omega\times \dom)}&\leq C_{\ref{thm:truncationError}}^{1/2}\sqrt{\frac{d}{2s}}(\kllevel+1)^{-\frac{s}{d}} \quad \text{ when $\kllevel$ is sufficiently large.}
\label{thm:truncationErrorTrunc}
\end{align}
\end{theorem}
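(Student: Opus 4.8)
The decay estimate is quoted from \cite{griebel2017decay}, but let me sketch the route I would take to establish it directly from the regularity assumption. The starting observation is that, since $\{\phi_{\kllevelrun}\}$ is orthonormal in $L^{2}(\dom)$ and $\{\psi_{\kllevelrun}\}$ is orthonormal in $L^{2}(\Omega)$, the truncation error is governed entirely by the tail of the spectrum,
\begin{equation*}
\Big\| \sum_{\kllevelrun > \kllevel} \sqrt{\lambda_{\kllevelrun}}\, \phi_{\kllevelrun} \psi_{\kllevelrun} \Big\|_{L^{2}(\Omega \times \dom)}^{2} = \sum_{\kllevelrun > \kllevel} \lambda_{\kllevelrun}.
\end{equation*}
Hence \eqref{thm:truncationErrorTrunc} follows from \eqref{thm:truncationErrorLambda} by comparing the tail sum with the integral $\int_{\kllevel}^{\infty} x^{-\frac{2s}{d}-1}\,\mathrm{d}x = \frac{d}{2s}\,\kllevel^{-\frac{2s}{d}}$, so that the whole difficulty is concentrated in the pointwise eigenvalue bound \eqref{thm:truncationErrorLambda}.

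The crucial point is that a direct min--max bound on a single eigenvalue only yields the suboptimal rate $\lambda_{\kllevelrun} \lesssim \kllevelrun^{-2s/d}$; the sharp extra factor $\kllevelrun^{-1}$ is recovered by estimating the entire tail sum $\sum_{\kllevelrun > \kllevel}\lambda_{\kllevelrun}$ as a trace and only afterwards passing to individual eigenvalues. Concretely, I would fix a finite element space $\Vh$ with $\dim \Vh = \dimVh = \kllevel$, so that $h \sim \kllevel^{-1/d}$, and invoke the Ky Fan variational characterization of partial spectral sums: since $\sum_{\kllevelrun \le \kllevel}\lambda_{\kllevelrun} = \max_{\dim W = \kllevel}\mathrm{tr}(P_{W}\covop)$, the choice $W = \Vh$ gives
\begin{equation*}
\sum_{\kllevelrun > \kllevel}\lambda_{\kllevelrun} \le \mathrm{tr}\big((I - \Pi_{\Vh})\covop\big) = \mathrm{tr}\big((I-\Pi_{\Vh})\covop(I-\Pi_{\Vh})\big),
\end{equation*}
the last equality following from cyclicity of the trace together with $\Pi_{\Vh}(I - \Pi_{\Vh}) = 0$.

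Writing $\covop = \mathcal{S}^{*}\mathcal{S}$ as in \eqref{eq:S} and using $\mathrm{tr}(A^{*}A) = \norm{A}_{HS}^{2}$ with $A = \mathcal{S}(I - \Pi_{\Vh})$, the projected trace becomes a Hilbert--Schmidt norm whose integral kernel is $\lnrf(\vec{\om},\cdot) - \Pi_{\Vh}\lnrf(\vec{\om},\cdot)$ (using self-adjointness of the $L^{2}$-projection), which yields
\begin{equation*}
\sum_{\kllevelrun > \kllevel}\lambda_{\kllevelrun} \le \int_{\Omega} \normL{\lnrf(\vec{\om},\cdot) - \Pi_{\Vh}\lnrf(\vec{\om},\cdot)}{\dom}^{2} \, \mathrm{d}\prob(\vec{\om}) \le \CI^{2}\, h^{2s}\, \constHs^{2},
\end{equation*}
where the last step applies the $L^{2}$-projection estimate \eqref{eq:approxL2} pathwise and then \cref{A:00}. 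With $h \sim \kllevel^{-1/d}$ this is precisely $\sum_{\kllevelrun > \kllevel}\lambda_{\kllevelrun} \lesssim \kllevel^{-2s/d}$.

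Finally, \eqref{thm:truncationErrorLambda} follows from this tail-sum bound by monotonicity of the eigenvalues: for any $\kllevel$,
\begin{equation*}
\kllevel\, \lambda_{2\kllevel} \le \sum_{\kllevelrun = \kllevel+1}^{2\kllevel}\lambda_{\kllevelrun} \le \sum_{\kllevelrun > \kllevel}\lambda_{\kllevelrun} \lesssim \kllevel^{-2s/d},
\end{equation*}
so that $\lambda_{2\kllevel} \lesssim \kllevel^{-\frac{2s}{d}-1}$, and after relabelling $\lambda_{\kllevelrun} \lesssim \kllevelrun^{-\frac{2s}{d}-1}$ for all $\kllevelrun$. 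I expect the main obstacle to be exactly this gain of the additional power $\kllevelrun^{-1}$: the tempting shortcut of bounding $\normHp{\phi_{\kllevelrun}}{s}{\dom}$ via $\phi_{\kllevelrun} = \lambda_{\kllevelrun}^{-1}\covop\phi_{\kllevelrun}$ and feeding it into the single-eigenvalue min--max merely reproduces the classical rate $\kllevelrun^{-2s/d}$, because the eigenfunctions are generically far smoother than the sample paths $\lnrf(\vec{\om},\cdot)$ (for Brownian motion on $[0,1]$ the paths lie in $H^{1/2-}$ whereas the eigenfunctions are analytic). Routing the estimate through the trace of the projected operator, rather than through individual eigenfunctions, is what avoids this loss; the remaining bookkeeping --- verifying the Hilbert--Schmidt identity, the admissibility of the polynomial degree $\lceil s \rceil$ in \eqref{eq:FEspace}, and the role of $s > d/2$ in securing the Mercer representation --- is routine.
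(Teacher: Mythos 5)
Your proposal is correct, but note the point of comparison: the paper itself contains no proof of \cref{thm:truncationError} --- it imports the statement wholesale from \cite[Theorems 3.2, 3.3, 3.4]{griebel2017decay}. What you have written is, in substance, a reconstruction of that reference's argument: there the tail sum $\sum_{\kllevelrun>\kllevel}\lambda_{\kllevelrun}$ is identified with the best rank-$\kllevel$ approximation error of $\lnrf$ in $L^2(\Omega\times\dom)$ via the Schmidt/Eckart--Young characterization (which is equivalent to your Ky Fan trace argument, since $\mathrm{tr}\big((I-\Pi_{\Vh})\covop(I-\Pi_{\Vh})\big)=\|\lnrf-\Pi_{\Vh}\lnrf\|^2_{L^2(\Omega\times\dom)}$), the competitor is taken to be the finite element projection $\Pi_{\Vh}\lnrf$ with $\dim\Vh\sim\kllevel$ and $h\sim\kllevel^{-1/d}$, and the pointwise bound \eqref{thm:truncationErrorLambda} is then extracted by exactly your monotonicity step $\kllevel\,\lambda_{2\kllevel}\le\sum_{\kllevelrun>\kllevel}\lambda_{\kllevelrun}$. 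Your diagnosis that the single-eigenvalue min--max with $W=\Vh$ only yields $\kllevelrun^{-2s/d}$, and that the extra factor $\kllevelrun^{-1}$ must come from estimating the whole tail as a trace, is also accurate and is the heart of the matter. The only discrepancies are cosmetic: your constant (built from $\CI$ and $\constHs$) differs from the paper's $C_{\ref{thm:truncationError}}$, which carries the Lorentz-sequence-space embedding constant of the reference, and your integral comparison gives $\kllevel^{-s/d}$ rather than $(\kllevel+1)^{-s/d}$, a difference absorbed by the ``$\kllevel$ sufficiently large'' qualifier already present in the statement.
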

\noindent 
Here the constant is
$C_{\ref{thm:truncationError}}:=\diam{\dom}^{2s}C_{\rm em}(d,s)\ext\normLH{\lnrf}{s}{\dom} ^2$, and $C_{\rm{em}}(d,s)$ denotes an embedding constant between certain Lorentz sequence spaces, and $\ext$ is a constant depending only on $\dom$ and $s$, see \cite{griebel2017decay} for details.

The next lemma gives a regularity result for the eigenfunctions $\{\phi_{\kllevelrun}\}_{\kllevelrun=1}^{\infty}$, see also \cite[Theorem 3.1 \& Remark after Assumption 3.1]{griebel2017decay}:
\begin{lemma}[Regularity of the eigenfunctions $\{\phi_{\kllevelrun}\}_{\kllevelrun=1}^{\infty}$] \label{lem:regeigenfunctions}
Assume that \cref{A:11} holds for $s>d/2$. Then for all $0\leq \beta \leq 1$, there holds
\begin{align}\label{eq:phi_theta}
\normHp{\phi_{\kllevelrun}}{\beta s}{\dom}\leq \lambda^{-\frac{\beta}{2}}_{\ell}.
\end{align}
\end{lemma}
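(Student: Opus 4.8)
The plan is to prove the regularity estimate $\normHp{\phi_{\kllevelrun}}{\beta s}{\dom}\leq \lambda^{-\frac{\beta}{2}}_{\ell}$ by interpolation between the endpoints $\beta=0$ and $\beta=1$. The case $\beta=0$ is immediate: since $H^0(\dom)=L^2(\dom)$ and the eigenfunctions are $L^2$-orthonormal, we have $\normL{\phi_{\kllevelrun}}{\dom}=1=\lambda_{\kllevelrun}^{0}$, which is exactly the claimed bound. The crux is therefore the endpoint $\beta=1$, namely $\normHp{\phi_{\kllevelrun}}{s}{\dom}\leq \lambda_{\kllevelrun}^{-1/2}$.

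First I would exploit the eigenrelation $\mathcal{R}\phi_{\kllevelrun}=\lambda_{\kllevelrun}\phi_{\kllevelrun}$ together with the factorization $\mathcal{R}=\mathcal{S}^*\mathcal{S}$ from \eqref{eq:S}. Writing $\phi_{\kllevelrun}=\lambda_{\kllevelrun}^{-1}\mathcal{S}^*\mathcal{S}\phi_{\kllevelrun}$ and recalling that $\psi_{\kllevelrun}=\lambda_{\kllevelrun}^{-1/2}\mathcal{S}\phi_{\kllevelrun}$, one obtains the representation
\begin{equation*}
\phi_{\kllevelrun}(\vec{x})=\lambda_{\kllevelrun}^{-1/2}\,\mathcal{S}^*\psi_{\kllevelrun}(\vec{x})=\lambda_{\kllevelrun}^{-1/2}\int_{\Omega}\lnrf(\vec{\om},\vec{x})\,\psi_{\kllevelrun}(\vec{\om})\,\mathrm{d}\prob(\vec{\om}).
\end{equation*}
This is the key identity: it expresses each eigenfunction, up to the factor $\lambda_{\kllevelrun}^{-1/2}$, as an average of the random field against the $L^2(\Omega)$-normalized random variable $\psi_{\kllevelrun}$. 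I would then take the $H^s(\dom)$ norm, move it inside the $\Omega$-integral (Minkowski's integral inequality for Bochner integrals), and bound
\begin{equation*}
\normHp{\phi_{\kllevelrun}}{s}{\dom}\leq \lambda_{\kllevelrun}^{-1/2}\int_{\Omega}\normHp{\lnrf(\vec{\om},\cdot)}{s}{\dom}\,|\psi_{\kllevelrun}(\vec{\om})|\,\mathrm{d}\prob(\vec{\om})\leq \lambda_{\kllevelrun}^{-1/2}\,\constHs\,\norm{\psi_{\kllevelrun}}_{L^2(\Omega)},
\end{equation*}
where the last step is Cauchy--Schwarz in $L^2(\Omega)$, using $\constHs$ from \cref{A:00} and the orthonormality $\norm{\psi_{\kllevelrun}}_{L^2(\Omega)}=1$. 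This yields the $\beta=1$ bound up to the constant $\constHs$.

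The main obstacle is matching the clean constant-free form of the stated estimate, $\lambda_{\kllevelrun}^{-\beta/2}$, with the $\constHs$ factor that the naive argument produces; presumably the lemma is stated under a normalization in which $\constHs$ (or the relevant $H^s$-seminorm constant) is absorbed into unity, or one tracks it as a hidden constant consistent with the paper's $\lesssim$ convention. Once both endpoints are in hand, I would invoke the standard Sobolev interpolation inequality $\normHp{\phi_{\kllevelrun}}{\beta s}{\dom}\leq \normL{\phi_{\kllevelrun}}{\dom}^{1-\beta}\,\normHp{\phi_{\kllevelrun}}{s}{\dom}^{\beta}$, valid for $0\leq\beta\leq1$ since $H^{\beta s}$ is the interpolation space $[L^2,H^s]_{\beta}$. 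Substituting the endpoint bounds $1$ and $\lambda_{\kllevelrun}^{-1/2}$ gives exactly $\lambda_{\kllevelrun}^{-\beta/2}$, completing the proof. I expect the argument referenced in \cite{griebel2017decay} to follow essentially this route, with the interpolation step being the natural mechanism that produces the fractional exponent $\beta/2$ from the integer endpoints.
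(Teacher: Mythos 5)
Your proposal is correct and follows essentially the same route as the paper: the paper offers no in-text proof of \cref{lem:regeigenfunctions}, deferring instead to \cite[Theorem 3.1 and the remark after Assumption 3.1]{griebel2017decay}, and the argument behind that citation is precisely yours --- the identity $\phi_{\kllevelrun}=\lambda_{\kllevelrun}^{-1/2}\mathcal{S}^{*}\psi_{\kllevelrun}$, the mapping bound $\norm{\mathcal{S}^{*}}_{\mc{B}(L^{2}(\Omega), H^{s}(\dom))}\le \constHs$ obtained from Minkowski's integral inequality and Cauchy--Schwarz, and interpolation between the endpoints $\beta=0$ and $\beta=1$. The factor $\constHs^{\beta}$ you flag is genuinely produced by this argument (together with the norm-equivalence constant in identifying $H^{\beta s}(\dom)$ with the interpolation space), and the constant-free form of \eqref{eq:phi_theta} amounts to normalizing $\constHs\le 1$ or absorbing the constant as the surrounding text does elsewhere, so this is an imprecision of the statement rather than a gap in your proof.
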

\noindent  
If the eigenvalue bound in \eqref{thm:truncationErrorLambda} are sharp, the bound \ref{eq:phi_theta} reduces to 
\begin{align*}
\normHp{\phi_{\kllevelrun}}{\beta s}{\dom}\leq C(\dom,d, s) \ell^{\beta \left(\frac{s}{d}+\frac{1}{2} \right)} .
\end{align*}
Here $C(\dom,d, s)$ denotes a positive constant that depends only on $\dom$, $d$, and $s$.
The sharpness of \ref{thm:truncationErrorLambda} is, however, only known for the whole function class, see \cite[Theorem 3.4]{griebel2017decay}. 
We also get a bound in the uniform norm by using Sobolev's embedding theorem for $s=\frac{d}{2}+\epsilon>\frac{d}{2}$, i.e. we have
$\normI{\phi_{\kllevelrun}}{\dom} \le \normHp{\phi_{\kllevelrun}}{s}{\dom} \le C(\dom,d, s)\kllevelrun^{\frac{s}{d}+\frac{1}{2}} =
C(\dom,d)\kllevelrun^{1+\epsilon}$ 
with $\kllevelrun$ sufficiently large,
where we use $\epsilon>0$ for an arbitrary small number that can change from line to line.
Moreover, the eigenfunctions $\{\phi_{\kllevelrun}\}_{\kllevelrun=1}^\infty$ are optimal in the sense that the mean square error resulting from a finite-rank approximation of $\lnrf$ is minimized \cite{ghanem2003stochastic}.

\section{Approximation of the covariance matrix}\label{sec:apprcov}
\subsection{Spatial approximation}\label{sec:spatial}
In this section we examine the influence of the spatial discretization.
For now, we assume that we have access to the semi-discrete covariance function
\begin{align}\label{integralkernelh}
R^{(h)}(\vec{x},\vec{x}^{\prime}):=\int_{\Omega}\lnrf^{(h)} (\vec{\om},\vec{x}) \lnrf^{(h)} (\vec{\om},\vec{x}^{\prime})\, \mathrm{d}\prob(\vec{\om})=\expec \left[\lnrf^{(h)} (\cdot,\vec{x}) \lnrf^{(h)} (\cdot,\vec{x}^{\prime}) \right]
\end{align}
and the corresponding integral operator
$\mc{R}^{(h)}f(\vec{x}):=\int_{\dom} f(\vec{x}^{\prime})R^{(h)}(\vec{x},\vec{x}^{\prime}) \, \mathrm{d}\vec{x}^{\prime}$.
Then we have the following result:
\begin{lemma}[Semi-discrete spatial approximation error estimate]
Let $R\in L^2(\dom \times \dom)$ be defined as in \eqref{carlemannkern} and let its numerical approximation $R_h$ be defined as in \eqref{integralkernelh}. Then it holds
\begin{align}\label{integralkernelerror}
\normc{R^{(h)} -R}{\dom \times \dom}
\le C_{\Pi_{\Vh;C}}(\constC+\constC^{(h)}) \constHs h^{s-\frac{d}{2}}.
\end{align}
Furthermore, for the associated integral operators  $\mathcal{R}$ and $\mathcal{R}_h$, it holds that
\begin{align}\label{integraloperatorerror}
\norm{\mc{R}-\mc{R}^{(h)}}_{\mc{B}(L^2(\dom))}\le C_{\Vh;L^2}(\constC\constHs+\constC^{(h)}\constHs^{(h)}) h^{s}.
\end{align}
\end{lemma}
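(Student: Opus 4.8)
The plan is to treat both inequalities with the same elementary bilinear ``product rule.'' Writing the spatial projection error as $\lnrf-\lnrf^{(h)}=(I-\Pi_{\Vh})\lnrf$, I split the integrand of the covariance difference pointwise in $(\vec{\om},\vec{x},\vec{x}^{\prime})$ as
\begin{equation*}
\lnrf(\vec{\om},\vec{x})\lnrf(\vec{\om},\vec{x}^{\prime})-\lnrf^{(h)}(\vec{\om},\vec{x})\lnrf^{(h)}(\vec{\om},\vec{x}^{\prime})=\bigl(\lnrf-\lnrf^{(h)}\bigr)(\vec{\om},\vec{x})\,\lnrf(\vec{\om},\vec{x}^{\prime})+\lnrf^{(h)}(\vec{\om},\vec{x})\,\bigl(\lnrf-\lnrf^{(h)}\bigr)(\vec{\om},\vec{x}^{\prime}),
\end{equation*}
so that each of the two resulting kernels is linear in the projection error in exactly one of its arguments. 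Everything then reduces to (i) controlling the factor that carries the projection error, via the approximation rates \eqref{eq:approxL2}--\eqref{eq:approxLinfty}, and (ii) controlling the factor that carries a plain copy of $\lnrf$ or $\lnrf^{(h)}$, via the moment constants $\constC$ and $\constC^{(h)}$.

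For the $C(\dom\times\dom)$ estimate \eqref{integralkernelerror} I integrate this identity against $\mathrm{d}\prob(\vec{\om})$ and bound $|R^{(h)}(\vec{x},\vec{x}^{\prime})-R(\vec{x},\vec{x}^{\prime})|$ by the two summands. In each summand I apply the Cauchy--Schwarz inequality in $\Omega$ to separate the error factor from the bounded factor, then take the supremum over $(\vec{x},\vec{x}^{\prime})\in\dom\times\dom$, pulling the supremum inside the $\prob$-integral (legitimate since $\int_\Omega|g(\vec{\om},\vec{x})|^2\,\mathrm{d}\prob\le\int_\Omega\|g(\vec{\om},\cdot)\|_{C(\dom)}^2\,\mathrm{d}\prob$ for each $\vec{x}$). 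This produces the product of $\bigl(\int_\Omega\|(\lnrf-\lnrf^{(h)})(\vec{\om},\cdot)\|_{C(\dom)}^2\,\mathrm{d}\prob\bigr)^{1/2}$ with $\constC^{(h)}$, respectively $\constC$. Since $\lnrf-\lnrf^{(h)}=(I-\Pi_{\Vh})\lnrf$, the pointwise projection estimate \eqref{eq:approxLinfty} (valid for $s>d/2$) bounds the error factor by $C_{\Pi_{\Vh;C}}h^{s-d/2}\constHs$, and adding the two contributions gives exactly the claimed prefactor $C_{\Pi_{\Vh;C}}(\constC+\constC^{(h)})\constHs$.

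For the operator estimate \eqref{integraloperatorerror} I exploit the factorizations $\mc{R}=\mc{S}^{*}\mc{S}$ and $\mc{R}^{(h)}=(\mc{S}^{(h)})^{*}\mc{S}^{(h)}$, where $\mc{S}^{(h)}$ is the operator \eqref{eq:S} with $\lnrf$ replaced by $\lnrf^{(h)}$; the product-rule split above is then precisely the telescoping identity
\begin{equation*}
\mc{R}-\mc{R}^{(h)}=(\mc{S}-\mc{S}^{(h)})^{*}\mc{S}+(\mc{S}^{(h)})^{*}(\mc{S}-\mc{S}^{(h)}).
\end{equation*}
Submultiplicativity of the operator norm (together with $\|(\mc{S}-\mc{S}^{(h)})^{*}\|=\|\mc{S}-\mc{S}^{(h)}\|$) leaves three quantities to estimate. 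The two bounded factors satisfy $\norm{\mc{S}}_{\mc{B}(L^2(\dom),L^2(\Omega))}\lesssim\constC$ and $\norm{\mc{S}^{(h)}}_{\mc{B}(L^2(\dom),L^2(\Omega))}\lesssim\constC^{(h)}$, by Cauchy--Schwarz in $\dom$ and $\|\cdot\|_{L^2(\dom)}\le|\dom|^{1/2}\|\cdot\|_{C(\dom)}$, the volume factor being absorbed into $C_{\Vh;L^2}$. The decisive factor is the error term, which I bound by the $L^2$ projection rate \eqref{eq:approxL2} to obtain $\norm{\mc{S}-\mc{S}^{(h)}}\lesssim\constHs h^{s}$. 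Pairing this common error factor with $\constC$ in the first summand and with $\constC^{(h)}$ in the second yields the asserted $h^{s}$ estimate.

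The main obstacle here is qualitative rather than computational: one must recognise that the two norms call for two different projection estimates — the pointwise bound \eqref{eq:approxLinfty} for the kernel measured in $C(\dom\times\dom)$, but the integrated bound \eqref{eq:approxL2} for the operator. It is precisely because the operator norm only feels an $L^2$-type (averaged) approximation error, rather than a pointwise one, that the second estimate gains the extra half-power and improves from $h^{s-d/2}$ to $h^{s}$. The only remaining bookkeeping is to keep each error factor paired, via Cauchy--Schwarz, with the moment constant belonging to the surviving copy of the field, which reproduces the prefactor structure stated in \eqref{integraloperatorerror}.
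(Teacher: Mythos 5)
Your proposal is correct, and it splits naturally into two halves. For the sup-norm bound \eqref{integralkernelerror} your argument is essentially the paper's own: the same bilinear splitting $\lnrf\lnrf-\lnrf^{(h)}\lnrf^{(h)}=(\lnrf-\lnrf^{(h)})\lnrf+\lnrf^{(h)}(\lnrf-\lnrf^{(h)})$, Cauchy--Schwarz over $\Omega$, and the uniform projection rate \eqref{eq:approxLinfty}, yielding $C_{\Pi_{\Vh;C}}(\constC+\constC^{(h)})\constHs h^{s-\frac{d}{2}}$. For the operator bound \eqref{integraloperatorerror}, however, you take a genuinely different route. The paper stays at the kernel level: it dominates the operator norm by the Hilbert--Schmidt norm, $\norm{\mc{R}-\mc{R}^{(h)}}_{\mc{B}(L^2(\dom))}\le\normL{R-R^{(h)}}{\dom\times\dom}$, writes $R-R^{(h)}=\expec\left[\lnrf\lnrf-\lnrf^{(h)}\lnrf^{(h)}\right]$, pulls the $L^2(\dom\times\dom)$ norm inside the expectation by Jensen's inequality, and then applies the $L^2$ projection rate \eqref{eq:approxL2} pathwise. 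You instead work at the operator level, via the factorizations $\mc{R}=\mc{S}^{*}\mc{S}$ and $\mc{R}^{(h)}=(\mc{S}^{(h)})^{*}\mc{S}^{(h)}$, the telescoping identity $\mc{R}-\mc{R}^{(h)}=(\mc{S}-\mc{S}^{(h)})^{*}\mc{S}+(\mc{S}^{(h)})^{*}(\mc{S}-\mc{S}^{(h)})$, and submultiplicativity; the three operator-norm estimates you invoke are all valid, each following from Cauchy--Schwarz in $\dom$ together with the moment bounds of \cref{A:00} and, for the error factor, the projection rate \eqref{eq:approxL2}. The trade-off is this: the paper's kernel-level argument actually establishes the stronger Hilbert--Schmidt (kernel $L^2$) bound, of which the operator bound is a corollary, while your argument controls only the operator norm; in exchange, your factorization makes the mechanism transparent --- a first-order perturbation of an operator of the form $A^{*}A$, which only ever sees $L^2$-type errors in the factors --- and it cleanly explains, exactly as you observe, why the operator estimate gains the half-power $h^{d/2}$ over the sup-norm estimate. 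The remaining discrepancies are cosmetic constant bookkeeping: you absorb a factor $|\dom|^{\frac{1}{2}}$ and arrive at $(\constC+\constC^{(h)})\constHs$ where the paper states $\constC\constHs+\constC^{(h)}\constHs^{(h)}$; these constants are of the same nature, and neither derivation is affected.
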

\begin{proof}
We define $n_h(\vec{\om}):=\normc{\lnrf^{(h)} (\vec{\om},\cdot)}{\dom}$ and $n(\vec{\om}):=\normc{\lnrf (\vec{\om},\cdot)}{\dom}$. Then
\begin{align*}
\normc{R^{(h)} -R}{\dom \times \dom} \le  C_{\Pi_{\Vh;L^2}}h^{s-\frac{d}{2}} \int_{\Omega}\normHp{\lnrf (\vec{\om},\cdot)}{s}{\dom} \left(n_h(\vec{\om})+n(\vec{\om})\right)\, \mathrm{d}\prob(\vec{\om}).
\end{align*}
This, along with the \cref{A:00} for $s>d/2$, results in
\begin{align*}
\normc{R^{(h)} -R}{\dom \times \dom}
\le C_{\Pi_{\Vh;C}} \constHs (\constC+\constC^{(h)})  h^{s-\frac{d}{2}} .
\end{align*}
This proves the first statement \eqref{integralkernelerror}.
Now notice that
\begin{align*}
\norm{\mc{R}-\mc{R}^{(h)}}_{\mc{B}(L^2(\dom))}:=\sup_{\genfrac{}{}{0pt}{}{v \in L^2(\dom)}{\normL{v}{\dom}=1}}\normL{\mathcal{R}v-\mathcal{R}^{(h)}v}{\dom}
\le \normL{R-R^{(h)}}{\dom\times\dom}.
\end{align*}
Using \eqref{carlemannkern} and \eqref{integralkernelh}, we get
$
\normL{R-R^{(h)}}{\dom\times\dom}=\normL{\expec\left[\kappa \kappa
-\kappa^{(h)} \kappa^{(h)} \right] }{\dom\times\dom}.
$
By an application of Jensen's inequality, i.e. $f(\expec{X}) \le \expec{f(X)}$ with the convex function $f(X)=\normL{X}{\dom\times\dom}$, we get the bound 
$
\normL{{R}-{R}^{(h)}}{\dom\times\dom}
\le C_{\Vh;L^2}(\constC \constHs+\constC^{(h)} \constHs^{(h)}) h^{s},
$
where we used \eqref{eq:approxL2} in the last step.
This proves the second statement \eqref{integraloperatorerror}.
\end{proof}
Next, we consider the eigenvalue problem for the operator $\mc{R}^{(h)}$, i.e. finding $(\phi_{\kllevelrun}^{(h)},\lambda^{(h)}_{\kllevelrun})\in \Vh \times \R$ such that
\begin{align}\label{genalizedeigenvalue}
\left(\mc{R}^{(h)}\phi_{\kllevelrun}^{(h)},v^{(h)}\right)_{\dom}=\lambda^{(h)}_{\kllevelrun} \left(\phi_{\kllevelrun}^{(h)},v^{(h)}\right)_{\dom} \quad \text{for all  } v^{(h)}\in \Vh,
\end{align}
where $\Vh$ is defined in \eqref{eq:FEspace}.
Using $\phi_{\kllevelrun}^{(h)}:= \vec{\Phi}^{(h)}_{\kllevelrun}\cdot \vec{\theta}^{(h)}=\sum_{\dimVhrun=1}^{\dimVh} \phi_{\kllevelrun;\dimVhrun} \theta_{\dimVhrun}^{(h)}  \in \Vh$,
for fixed but arbitrary $ v^{(h)} =  \vec{V}^{(h)}\cdot \vec{\theta}^{(h)} \in \Vh$, i.e. for fixed but arbitrary $ \vec{V}^{(h)}\in \R^{\dimVh}$, we derive the identity
$
\left(\mc{R}^{(h)}\phi_{\kllevelrun}^{(h)},v^{(h)}\right)_{\dom}=
 \left( \vec{V}^{(h)}\right)^{\transpose} \massmatrix  \vec{\Sigma}_{  \vec{\lnrfrv}^{(h)}}\massmatrix \vec{\Phi}_{\kllevelrun}^{(h)}. 
 $
Here, $\massmatrix$ and $\stiffnessmatrix$ are the mass matrix and the stiffness matrix, respectively, defined by
\begin{align}\label{stiffnessmatrix2}
\massmatrix:=(\vec{\theta}^{(h)},{\vec{\theta}^{(h)}}^{\transpose})_{\dom} \quad \text{and} \quad
\stiffnessmatrix:=\massmatrix\vec{\Sigma}_{\vec{\lnrfrv}^{(h)}}\massmatrix.  
\end{align}
Note that the symmetry of the mass matrix $\massmatrix$ implies the symmetry of $\stiffnessmatrix$. 

Thus, the  corresponding matrix form of the generalized eigenvalue problem \eqref{genalizedeigenvalue} is to search for the eigenpair $(\vec{\Phi}^{(h)}_{\kllevelrun} ,\lambda^{(h)}_{\kllevelrun})\in \mathbb{R}^{\dimVh}\times \mathbb{R}$ for $\kllevelrun=1,\cdots,\dimVh$, satisfying
\begin{align}\label{genalizedeigenvalue2}
\stiffnessmatrix \vec{\Phi}^{(h)}_{\kllevelrun} =\lambda^{(h)}_{\kllevelrun} \massmatrix \vec{\Phi}^{(h)}_{\kllevelrun}.
\end{align}
We transform \eqref{genalizedeigenvalue2} into a conventional eigenvalue problem. Note that the mass matrix $\massmatrix$ is symmetric positive-definite, and thus has a 
Cholesky factorization
$\massmatrix=:\CholStiffnessL\left(\CholStiffnessL\right)^{\transpose} \in \R^{\dimVh \times \dimVh}$.
Now let $\tildstiffnessmatrix$ be defined by
$\tildstiffnessmatrix  :=\left(\CholStiffnessL\right)^{\transpose}\vec{\Sigma}_{\vec{\lnrfrv}^{(h)}}\CholStiffnessL$.
Then we can solve for $(\widetilde{\vec{\Phi}}^{(h)}_{\kllevelrun},\lambda^{(h)}_{\kllevelrun}) \in \R ^{\dimVh} \times \R$ for $\kllevelrun=1,\cdots,\dimVh$, satisfying
\begin{align}\label{genalizedeigenvalue3}
\tildstiffnessmatrix \widetilde{\vec{\Phi}}^{(h)}_{\kllevelrun} 
= \lambda^{(h)}_{\kllevelrun} \widetilde{\vec{\Phi}}^{(h)}_{\kllevelrun},
\end{align}
which is a conventional eigenvalue problem for a symmetric positive-definite matrix.
 By construction we have that the eigenvalue problems \eqref{genalizedeigenvalue2} and \eqref{genalizedeigenvalue3} are  equivalent in the following sense:
\begin{align*}
&(\widetilde{\vec{\Phi}}^{(h)}_{\kllevelrun},\lambda^{(h)}_{\kllevelrun}) \in \R^{\dimVh} \times \R
\text{ is an eigenpair of \eqref{genalizedeigenvalue3}}\Longleftrightarrow\\&
\left( \vec{\Phi}^{(h)}_{\kllevelrun}:=\left(\CholStiffnessL\right)^{-\transpose}\widetilde{\vec{\Phi}}^{(h)}_{\kllevelrun},
\lambda^{(h)}_{\kllevelrun}\right) \in \R^{\dimVh} \times \R
\text{ is an eigenpair of \eqref{genalizedeigenvalue2}.}
\end{align*}
Note that,  with $\phi_{\kllevelrun}^{(h)}= \vec{\Phi}^{(h)}_{\kllevelrun}\cdot \vec{\theta}^{(h)}$, we get a Mercer-like representation 
\begin{align}\label{mercerh}
R^{(h)}(\vec{x},\vec{x}^{\prime})
\approx R^{(\kllevel;h)}(\vec{x},\vec{x}^{\prime}):= \sum_{\dimVhrun=1}^{\kllevel}\lambda^{(h)}_{\dimVhrun}\phi_{\dimVhrun}^{(h)}(\vec{x})\phi_{\dimVhrun}^{(h)}(\vec{x}^{\prime})
\end{align}
for $1\le \kllevel \le \dimVh $.

We now give a classical error analysis by means of the
approximation theory of conforming finite element methods.
Consider the error operator
\begin{align}\label{eq:notationErr}
\mc{E}^{(h)}=\mc{R}-\mc{R}^{(h)}:L^{2}(\dom) \to L^{2}(\dom),
\end{align}
which is a self-adjoint
operator on $L^2(\dom)$. We have the following error representation by Galerkin orthogonality: 
\begin{lemma}\label{lemma:errOper}
The error operator $\mc{E}^{(h)}$ has the property
\[
\left(\mc{E}^{(h)} v,v\right)_{\dom}=\left(v,(I-\Pi_{\Vh})\mc{R}(I+\Pi_{\Vh})v\right)_{\dom}\quad \text{ for all } \quad v\in L^2(D),
\]
where we use the operator $\Pi_{\Vh}:L^{2}(\dom) \to \Vh$ from \eqref{eq:approxLinfty}.
\end{lemma}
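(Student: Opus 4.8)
The plan is to reduce everything to a single structural fact about the semi-discrete operator, namely the symmetric factorization $\mc{R}^{(h)}=\Pi_{\Vh}\mc{R}\,\Pi_{\Vh}$, and then to read off the asserted identity by a short computation with self-adjoint operators. Throughout I write $\Pi:=\Pi_{\Vh}$ and use that the $L^2$-projection is self-adjoint and idempotent on $L^2(\dom)$, and that $\mc{R}=\mathcal{S}^{*}\mathcal{S}$ is self-adjoint.

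First I would establish $\mc{R}^{(h)}=\Pi\mc{R}\,\Pi$. Starting from $\mc{R}^{(h)}f(\vec{x})=\int_{\dom}R^{(h)}(\vec{x},\vec{x}^{\prime})f(\vec{x}^{\prime})\,\mathrm{d}\vec{x}^{\prime}$ with $R^{(h)}$ as in \eqref{integralkernelh}, Fubini's theorem gives $\mc{R}^{(h)}f(\vec{x})=\int_{\Omega}\lnrf^{(h)}(\vec{\om},\vec{x})\,(\lnrf^{(h)}(\vec{\om},\cdot),f)_{\dom}\,\mathrm{d}\prob(\vec{\om})$. Since $\lnrf^{(h)}(\vec{\om},\cdot)=\Pi\lnrf(\vec{\om},\cdot)$, self-adjointness of $\Pi$ turns the inner integral into $(\lnrf(\vec{\om},\cdot),\Pi f)_{\dom}$, while the leading factor $\lnrf^{(h)}(\vec{\om},\vec{x})=(\Pi\lnrf(\vec{\om},\cdot))(\vec{x})$ lets me pull the outer projection out of the $\vec{\om}$-integral by linearity. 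What remains inside is exactly $\mc{R}(\Pi f)$, so $\mc{R}^{(h)}f=\Pi\,\mc{R}(\Pi f)$.

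With this in hand, write $\mc{E}^{(h)}=\mc{R}-\Pi\mc{R}\,\Pi$ from \eqref{eq:notationErr} and expand the target operator as $(I-\Pi)\mc{R}(I+\Pi)=\mc{R}+\mc{R}\Pi-\Pi\mc{R}-\Pi\mc{R}\Pi$. Testing against $v$, the two cross terms coincide: by self-adjointness of $\mc{R}$ and of $\Pi$ together with symmetry of the real $L^2$ inner product one has $(v,\mc{R}\Pi v)_{\dom}=(\mc{R}v,\Pi v)_{\dom}=(\Pi v,\mc{R}v)_{\dom}=(v,\Pi\mc{R}v)_{\dom}$, so $(v,\mc{R}\Pi v)_{\dom}-(v,\Pi\mc{R}v)_{\dom}=0$. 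The surviving terms are $(v,\mc{R}v)_{\dom}-(v,\Pi\mc{R}\Pi v)_{\dom}=(v,(\mc{R}-\mc{R}^{(h)})v)_{\dom}=(\mc{E}^{(h)}v,v)_{\dom}$, which is the claim.

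The only step requiring genuine argument is the identification $\mc{R}^{(h)}=\Pi\mc{R}\,\Pi$; the subsequent cancellation is pure operator algebra and is the abstract form of Galerkin orthogonality. The crucial enabling ingredient there is the self-adjointness of the $L^2$-projection, which is what allows the projection appearing in the kernel $R^{(h)}$ to be transferred onto the test function and then factored symmetrically around $\mc{R}$. I expect no analytic obstacle beyond justifying the interchange of the $\vec{\om}$- and $\vec{x}^{\prime}$-integrations, which is immediate from the Hilbert–Schmidt property of $\mc{R}$ together with \cref{A:00}.
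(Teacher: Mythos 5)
Your proof is correct. The paper itself states \cref{lemma:errOper} without proof (the identity is merely attributed to Galerkin orthogonality in the sentence preceding it), so there is no argument in the paper to compare against; your proposal supplies exactly the missing justification, and its two ingredients are the right ones. First, the factorization $\mc{R}^{(h)}=\Pi_{\Vh}\mc{R}\,\Pi_{\Vh}$ follows from \eqref{integralkernelh} as you describe: Fubini, self-adjointness of the $L^{2}$-projection to move $\Pi_{\Vh}$ onto the test function, and commuting the bounded operator $\Pi_{\Vh}$ with the Bochner integral over $\Omega$ (legitimate under \cref{A:00}); one can check this factorization is also consistent with the paper's matrix identity $\stiffnessmatrix=\massmatrix\vec{\Sigma}_{\vec{\lnrfrv}^{(h)}}\massmatrix$ in \eqref{stiffnessmatrix2}. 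Second, you handled the one genuinely delicate point correctly: $\mc{E}^{(h)}$ and $(I-\Pi_{\Vh})\mc{R}(I+\Pi_{\Vh})$ are \emph{not} equal as operators --- they differ by the commutator $\mc{R}\Pi_{\Vh}-\Pi_{\Vh}\mc{R}$ --- so the lemma holds only as a quadratic-form identity, and your observation that $\left(v,\mc{R}\Pi_{\Vh}v\right)_{\dom}=\left(v,\Pi_{\Vh}\mc{R}v\right)_{\dom}$, by self-adjointness of $\mc{R}$ and $\Pi_{\Vh}$ and symmetry of the real inner product, is precisely what makes the cross terms cancel when both sides are tested with the same $v$.
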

A direct consequence of 
\cref{lemma:errOper} is the upper bound for the operator norm of $\mc{E}^{(h)}$
\begin{align}\label{eq:E_{h}}
\norm{\mc{E}^{(h)}}_{\mc{B}(L^2(\dom))}\leq 2C_{\Pi_{\Vh;L^2}} h^{s}\norm{\mc{R}}_{\mc{B}(L^2(\dom), H^s(\dom))},
\end{align}
where we used \eqref{eq:approxLinfty}.

Finally, we are ready to present the main result of this section.
\begin{proposition}[Conforming Galerkin approximation estimate]\label{prop:babuska}
Assume \cref{A:11} holds for $s>d/2$.  Then there are constants $C_1$, $C_2$ and $h_0$ such that
\[
\left| \lambda_{\kllevelrun}^{(h)}-\lambda_{\kllevelrun}\right|\leq C_1\lambda_{\kllevelrun}^{-1}h^{2s} \quad\text{ for all }\quad 0<h\leq h_0.\]
Furthermore, the eigenvectors $\{\phi_{\kllevelrun}^{(h)}\}_{\kllevelrun=1}^{\dimVh}$ of $\mc{R}^{(h)}$ can be chosen so that
\[
\normL{\phi_{\kllevelrun}^{(h)}-\phi_{\kllevelrun}}{\dom}\leq C_{2}\lambda_{\kllevelrun}^{-1} h^{s}\quad\text{ for all }\quad 0<h\leq h_0.\]
Here the constants $C_1$ and $C_2$ are independent of $h$ and $h_0>0$ should be sufficiently small.
\end{proposition}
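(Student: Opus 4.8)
The plan is to recognize the discretized operator as a Rayleigh--Ritz (conforming Galerkin) approximation and then invoke the classical Babu\v{s}ka--Osborn spectral approximation machinery, the decisive point being the \emph{doubling} of the convergence order for the eigenvalues. First I would record the structural identity $\mc{R}^{(h)}=\Pi_{\Vh}\mc{R}\Pi_{\Vh}$, which follows from $\lnrf^{(h)}=\Pi_{\Vh}\lnrf$ together with the self-adjointness of the $L^2$-projection. Consequently the nonzero eigenvalues of $\mc{R}^{(h)}$ are exactly the generalized eigenvalues $\lambda_{\ell}^{(h)}$ of \eqref{genalizedeigenvalue2}, and for every $v\in\Vh$ one has the crucial variational identity $(\mc{R}^{(h)}v,v)_{\dom}=(\mc{R}v,v)_{\dom}$, since $\Pi_{\Vh}v=v$. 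Because the eigenvalues are assumed simple, no multiplicity bookkeeping is required.

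For the eigenvalue bound I would use the Courant--Fischer min-max characterization. As $\Vh\subset L^{2}(\dom)$ and the Rayleigh quotients agree on $\Vh$, maximizing over the smaller family of $\ell$-dimensional subspaces of $\Vh$ immediately yields the one-sided bound $\lambda_{\ell}^{(h)}\le\lambda_{\ell}$, i.e. the eigenvalues are approximated from below. For the matching lower bound I would insert the trial space $S=\mathrm{span}\{\Pi_{\Vh}\phi_{j}:1\le j\le\ell\}$, which is $\ell$-dimensional once $h$ is small enough (this uses that $\normL{(I-\Pi_{\Vh})\phi_{j}}{\dom}$ is small, so the projected eigenfunctions remain linearly independent). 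Writing a generic element of $S$ as $\Pi_{\Vh}w$ with $w=\sum_{j\le\ell}c_{j}\phi_{j}$ and expanding $\Pi_{\Vh}w=w-(I-\Pi_{\Vh})w$ inside the Rayleigh quotient $(\mc{R}\,\Pi_{\Vh}w,\Pi_{\Vh}w)_{\dom}/\normL{\Pi_{\Vh}w}{\dom}^{2}$, the leading term is $\sum_{j}\lambda_{j}c_{j}^{2}\ge\lambda_{\ell}\normL{w}{\dom}^{2}$, while the cross and remainder terms are controlled by $\max_{j\le\ell}\normL{(I-\Pi_{\Vh})\phi_{j}}{\dom}^{2}$. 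This gives $\lambda_{\ell}-\lambda_{\ell}^{(h)}\lesssim\max_{j\le\ell}\normL{(I-\Pi_{\Vh})\phi_{j}}{\dom}^{2}$. This squaring is the heart of the matter: it produces the order $h^{2s}$ and cannot be read off from the plain operator-norm estimate \eqref{eq:E_{h}}, which only delivers $h^{s}$.

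It then remains to insert the quantitative estimates. The projection bound \eqref{eq:approxL2} gives $\normL{(I-\Pi_{\Vh})\phi_{j}}{\dom}\le C_{\Pi_{\Vh;L^2}}h^{s}\normHp{\phi_{j}}{s}{\dom}$, and \cref{lem:regeigenfunctions} with $\beta=1$ gives $\normHp{\phi_{j}}{s}{\dom}\le\lambda_{j}^{-1/2}$. Since $\lambda_{j}\ge\lambda_{\ell}$ for $j\le\ell$, the maximum is governed by $\lambda_{\ell}^{-1}$, so that $\lambda_{\ell}-\lambda_{\ell}^{(h)}\le C_{1}\lambda_{\ell}^{-1}h^{2s}$ with $C_{1}$ depending on $\ell$ and the leading eigenvalues but not on $h$; this is the first assertion.

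For the eigenfunctions I would use the perturbation bound that accompanies the min-max analysis: since $\lambda_{\ell}$ is simple there is a spectral gap $\delta_{\ell}=\min_{k\ne\ell}|\lambda_{\ell}-\lambda_{k}|>0$, and the suitably oriented Ritz vector satisfies $\normL{\phi_{\ell}^{(h)}-\phi_{\ell}}{\dom}\lesssim\delta_{\ell}^{-1}\normL{(I-\Pi_{\Vh})\phi_{\ell}}{\dom}$ up to higher-order terms, which is of order $h^{s}$; inserting \eqref{eq:approxL2} and \cref{lem:regeigenfunctions} once more produces the claimed $C_{2}\lambda_{\ell}^{-1}h^{s}$ bound, with the gap and eigenvalue factors absorbed into $C_{2}$. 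The main obstacle I anticipate is exactly the eigenvalue step: one must avoid the crude estimate \eqref{eq:E_{h}} and instead exploit the variational identity on $\Vh$ to gain the extra factor $h^{s}$, while simultaneously tracking the $\lambda_{\ell}$-dependence correctly through the eigenfunction regularity of \cref{lem:regeigenfunctions}. A secondary technical point is verifying that $S$ is genuinely $\ell$-dimensional, which is what fixes the threshold $h_{0}$.
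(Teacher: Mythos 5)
Your proposal is correct, but it takes a genuinely different route from the paper: the paper's entire proof is a one-line citation of Babu\v{s}ka--Osborn (Theorem 9.1 in the cited handbook chapter), whereas you reconstruct the underlying Rayleigh--Ritz argument from scratch. Your key structural observation $\mc{R}^{(h)}=\Pi_{\Vh}\mc{R}\Pi_{\Vh}$ is exactly right (it is also what makes the paper's Galerkin-orthogonality lemma, \cref{lemma:errOper}, tick), and your two-step scheme --- min-max giving $\lambda_{\kllevelrun}^{(h)}\le\lambda_{\kllevelrun}$ from above (a fact the paper also invokes later in the proof of \cref{finalthm}), then the trial space of projected eigenfunctions with the second-order cross-term cancellation to gain $h^{2s}$ --- is the standard mechanism behind the cited theorem; combining it with \eqref{eq:approxL2} and \cref{lem:regeigenfunctions} at $\beta=1$ correctly produces the $\lambda_{\kllevelrun}^{-1}h^{2s}$ and $h^{s}$ scalings. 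What your approach buys is transparency: it shows explicitly where the order doubling and the $\lambda_{\kllevelrun}^{-1}$ factors come from, and it honestly flags that the resulting constants depend on the index $\kllevelrun$ (through $\ell$, $\lambda_{1}$, and, for the eigenfunction bound, the gap factor $\delta_{\kllevelrun}^{-1}\lambda_{\kllevelrun}^{1/2}$ absorbed into $C_{2}$) --- a dependence the paper's bare citation leaves implicit, since Babu\v{s}ka--Osborn constants are likewise per-eigenvalue, while the proposition only promises independence of $h$. What the citation buys, conversely, is brevity and generality: it covers eigenvalues of arbitrary multiplicity, whereas you assume simplicity to avoid multiplicity bookkeeping; this is a mild restriction here, since the paper's own spectral-gap quantity \eqref{defn:gap} is only useful when the gaps $\delta_{\kllevelrun}$ are positive, but it is worth stating that for a multiple eigenvalue one must compare $\phi_{\kllevelrun}^{(h)}$ to a suitably chosen element of the eigenspace, which is precisely the ``can be chosen'' clause in the proposition.
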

\begin{proof}The proof follows from \cite[Theorem 9.1]{babuska&osborn91}.
\end{proof}

\subsection{Approximation of the covariance operator from samples}\label{subsec:approxcovhM}
We will now approximate the spectral decomposition \eqref{mercerh} based on the unknown stiffness matrix $\stiffnessmatrix$ by approximating the true covariance matrix $\vec{\Sigma}_{\vec{\lnrfrv}^{(h)}}\in \R^{\dimVh \times \dimVh}$.
Suppose we have such an estimate $\vec{\Sigma}^{(h;\nsamples)}_{\vec{\lnrfrv}^{(h)}}\in \R^{\dimVh \times \dimVh},$
where $\vec{\Sigma}^{(h;\nsamples)}_{\vec{\lnrfrv}^{(h)}}$ is also symmetric. Then, by repeating the approximation for the derivation  \eqref{mercerh}, we obtain the approximations 
\begin{align}
\stiffnessmatrixM&:=\left(\massmatrix\right)^{\transpose}\vec{\Sigma}^{(h;\nsamples)}_{\vec{\lnrfrv}^{(h)}} \massmatrix  \approx \stiffnessmatrix=\left(\massmatrix\right)^{\transpose}\vec{\Sigma}^{(h)}_{\vec{\lnrfrv}^{(h)}} \massmatrix  \label{stiffnessample}\quad \text{and} \quad\\
\tildestiffnessmatrixM&:=\left(\CholStiffnessL\right)^{\transpose}\vec{\Sigma}^{(h;\nsamples)}_{\vec{\lnrfrv}^{(h)}}
\CholStiffnessL \approx  \tildstiffnessmatrix=\left(\CholStiffnessL \right)^{\transpose}\vec{\Sigma}^{(h)}_{\vec{\lnrfrv}^{(h)}}\CholStiffnessL  \label{tildestiffnessample},
\end{align}
and we encounter the following eigenvalue problem: Search $(\widetilde{\vec{\Phi}}^{(h;\nsamples)}_{\kllevelrun},\lambda^{(h;\nsamples)}_{\kllevelrun} ) \in \R^{\dimVh} \times \R$ for all $\kllevelrun=1,\cdots,\dimVh$ such that
\begin{align}\label{genalizedeigenvalue3-estimator}
 \tildestiffnessmatrixM\widetilde{\vec{\Phi}}^{(h;\nsamples)}_{\kllevelrun} = \lambda^{(h;\nsamples)}_{\kllevelrun} \widetilde{\vec{\Phi}}^{(h;\nsamples)}_{\kllevelrun}.
\end{align}
Since $\tildestiffnessmatrixM$ is a symmetric matrix, we have $
\left(\widetilde{\vec{\Phi}}^{(h;\nsamples)}_{\kllevelrun} ,\widetilde{\vec{\Phi}}^{(h;\nsamples)}_{\kllevelrun^{\prime}}  \right)_{\ell_2}=\delta_{\kllevelrun, \kllevelrun^{\prime}}$  after normalization.
We also encounter the generalized eigenvalue problem
\begin{align}\label{genalizedeigenvalue4-estimator}
 \stiffnessmatrixM  \vec{\Phi}^{(h;\nsamples)}_{\kllevelrun} = \lambda^{(h;\nsamples)}_{\kllevelrun} \massmatrix \vec{\Phi}^{(h;\nsamples)}_{\kllevelrun}.
\end{align}
Similarly, we have the equivalence 
\begin{align*}
&(\widetilde{\vec{\Phi}}^{(h;\nsamples)}_{\kllevelrun},\lambda^{(h;\nsamples)}_{\kllevelrun}) \in \R^{\dimVh} \times \R
\text{ is an eigenpair of \eqref{genalizedeigenvalue3-estimator}}  \Longleftrightarrow \\ 
&\left( \vec{\Phi}^{(h;\nsamples)}_{\kllevelrun}:=\left(\CholStiffnessL\right)^{-\transpose}\widetilde{\vec{\Phi}}^{(h;\nsamples)}_{\kllevelrun},
\lambda^{(h;\nsamples)}_{\kllevelrun}\right) \in \R^{\dimVh} \times \R
\text{ is an eigenpair of \eqref{genalizedeigenvalue4-estimator}} .
\end{align*}
Thus, we can derive approximations to $\tildstiffnessmatrix$ and $\stiffnessmatrix$ by
\begin{align}\label{stiffnessmatrixShhat}
\tildestiffnessmatrixM=\sum_{\dimVhrun=1}^{\dimVh} \lambda^{(h;\nsamples)}_{\dimVhrun}\widetilde{\vec{\Phi}}^{(h;\nsamples)}_{\dimVhrun}\otimes \widetilde{\vec{\Phi}}^{(h;\nsamples)}_{\dimVhrun} \text{ and }\stiffnessmatrixM=\sum_{\dimVhrun=1}^{\dimVh} \lambda^{(h;\nsamples)}_{\dimVhrun}\vec{\Phi}^{(h;\nsamples)}_{\dimVhrun}\otimes \vec{\Phi}^{(h;\nsamples)}_{\dimVhrun}.
\end{align}
We define
\begin{align}\label{eq:errorstiffnessh}
\mathcal{E}_{(h;M)}:=\left\| \tildstiffnessmatrix-\tildestiffnessmatrixM \right\|_{2\to 2}
\end{align}
Then the eigensystem \eqref{stiffnessmatrixShhat} gives a computable Mercer-like expansion
\begin{align}\label{mercerh2}
R^{(h;\nsamples)}(\vec{x},\vec{x}^{\prime}) :=\sum_{\dimVhrun=1}^{\dimVh}\lambda^{(h;\nsamples)}_{\dimVhrun}
\phi^{(h;\nsamples)}_{\dimVhrun}(\vec{x}) \, \phi^{(h;\nsamples)}_{\dimVhrun}(\vec{x}^{\prime}).
\end{align}
We define
$\phi_{k}^{(h;\nsamples)}:=\vec{\Phi}^{(h;\nsamples)}_{k} \cdot \vec{\theta}^{(h)}$ 
for all $k=1,\cdots,\dimVh$.
With $
\massmatrix=\CholStiffnessL\left(\CholStiffnessL\right)^{\transpose} \in \R^{\dimVh \times \dimVh}
$, we observe 
that by construction
\begin{align}\label{philhmortho}
\left( \phi_{k}^{(h;\nsamples)}, \phi_{k^{\prime}}^{(h;\nsamples)}\right)_{L_2(\dom)}
=\left(\widetilde{\vec{\Phi}}^{(h;\nsamples)}_{k}, \widetilde{\vec{\Phi}}^{(h;\nsamples)}_{k^{\prime}}\right)_{\ell_2}=\delta_{k,k^{\prime}}.
\end{align}
To bound the sampling error for the eigenvalues, we can
apply Weyl's inequality (see for example \cite[(Eq. 3.1)]{IpsenNadler09}) to either of the eigenvalue problems  \eqref{genalizedeigenvalue3-estimator} or \eqref{genalizedeigenvalue4-estimator}. We will focus on the analysis on the regular eigenvalue problem \eqref{genalizedeigenvalue3-estimator}. 
We obtain
\begin{align}
\label{weyleigenvalue}
\left| \lambda^{(h)}_{\kllevelrun} -\lambda^{(h;\nsamples)}_{\kllevelrun} \right| \le 
\mathcal{E}_{(h;M)}, \quad 1\le  \kllevelrun \le \min\{\dimVh,\kllevel\}.
\end{align}
Now we can use \cref{prop:babuska} to get that, under 
\cref{A:11}, there are constants $C_1$ and $h_0$ such that
\begin{align}\label{weyleigenvalue2}
\left| \lambda_{\kllevelrun} -\lambda^{(h;\nsamples)}_{\kllevelrun} \right|&\le 
\left| \lambda_{\kllevelrun}-\lambda^{(h)}_{\kllevelrun}\right| + \left| \lambda^{(h)}_{\kllevelrun} -\lambda^{(h;\nsamples)}_{\kllevelrun} \right|\nonumber 
\leq C_1\lambda_{\kllevelrun}^{-1}h^{2s}+\mathcal{E}_{(h;M)} 
\end{align}
for all $0<h\leq h_0$ and $ 1 \le \kllevelrun \le \min\{\dimVh,\kllevel\}$.
For perturbation bounds on the eigenvectors, see \cite{varah70}, and for a recent version of the Davis-Kahan theorem including randomness, see  \cite{orourke18}.
We use the variant presented in \cite[Corollary 1]{yu15}. To do this, recall the assumption that the eigenvalues are  non-increasingly ordered, i.e. $\lambda^{(h)}_{1}\ge \dots \ge \lambda^{(h)}_{\dimVh}$ and $\lambda^{(h;\nsamples)}_{1}\ge \dots \ge \lambda^{(h;\nsamples)}_{\dimVh}$.
We define the discrete spectral gap 
\begin{equation}
\label{spectralgap1}
\delta^{(h;\nsamples)}_{\kllevelrun}:=\min \left\{\left|\lambda^{(h;\nsamples)}_{\kllevelrun-1}- \lambda^{(h)}_{\kllevelrun}\right|, \left|\lambda^{(h)}_{\kllevelrun}- \lambda^{(h;\nsamples)}_{\kllevelrun+1}\right|\right\},
\end{equation}
with $\lambda^{(h;\nsamples)}_{0}=\infty$ and $\lambda^{(h;\nsamples)}_{\dimVh+1}=-\infty$.
The quantity $\delta_{\kllevelrun}^{(h;\nsamples)}$ is problematic because it contains $\lambda^{(h)}_{\kllevelrun}, \lambda^{(h;\nsamples)}_{\kllevelrun\pm 1}$. Therefore, we want to replace it with a quantity that depends only on the continuous problem. To do this, we use the strategy presented in \cite{yu15}:
Let $\delta_{\kllevelrun}$ be the continuous spectral gap defined by
\begin{align}\label{defn:gap}
\delta_{\kllevelrun}:=\min \left\{\lambda_{\kllevelrun-1}- \lambda_{\kllevelrun}, \lambda_{\kllevelrun}- \lambda_{\kllevelrun+1}\right\} \text{ with } \lambda_0=\infty.
\end{align}

\begin{assumption}[Spectral gap]\label{ass:spectralgap}
Assume that there is a sufficiently small positive parameter $h_{1}\le h_0$ and that
$\tildestiffnessmatrixM$ is a good approximation of $\tildstiffnessmatrix$, i.e. for $h\le h_1$ holds
\begin{align}
\label{condspectralgap1}
\delta_{\kllevelrun}\ge 4C_1h^{2s} \lambda_{\kllevelrun+1}^{-1} +4\mathcal{E}_{(h;M)}.
\end{align}
\end{assumption}
We then have the following result:
\begin{theorem}\label{theoremeigenfucntions}
Let  \eqref{condspectralgap1} be valid. It then holds
\begin{align}\label{spectralgap3}
\delta^{(h;\nsamples)}_{\kllevelrun} \ge \frac{1}{4} \delta_{\kllevelrun}.
\end{align}

With some constant $C$, it also holds 
\begin{align}
\label{kahandaviscor}
\left\| \widetilde{\vec{\Phi}}^{(h)}_{\kllevelrun}- \widetilde{\vec{\Phi}}^{(h;\nsamples)}_{\kllevelrun} \right\|_2\le C (\delta^{(h;\nsamples)}_{\kllevelrun})^{-1}\mathcal{E}_{(h;M)} \le 4C\delta^{-1}_{\kllevelrun} \mathcal{E}_{(h;M)},
\end{align}
where we fixed a sign for $\widetilde{\vec{\Phi}}^{(h;\nsamples)}_{\kllevelrun}$ such that $\widetilde{\vec{\Phi}}^{(h)}_{\kllevelrun}\cdot \widetilde{\vec{\Phi}}^{(h;\nsamples)}_{\kllevelrun}\ge 0$.

Also for $\phi_{\kllevelrun}^{(h;\nsamples)}:=\left(\CholStiffnessL\right)^{\transpose} \widetilde{\vec{\Phi}}^{(h;\nsamples)}_{k} \cdot \vec{\theta}^{(h)}$ the bound
\begin{align}
\label{phivecerror}
\left\|\phi_{\kllevelrun}^{(h)}- \phi_{\kllevelrun}^{(h;\nsamples)} \right\|_{L_{2}(\dom)}\le C (\delta^{(h;\nsamples)}_{\kllevelrun})^{-1}\mathcal{E}_{(h;M)}\le 4C \delta^{-1}_{\kllevelrun}\mathcal{E}_{(h;M)}
\end{align}
holds.
\end{theorem}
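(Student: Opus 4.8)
The plan is to prove the three assertions in the order they are stated, since each feeds into the next. First I would establish the spectral-gap comparison \eqref{spectralgap3}, which is purely an eigenvalue estimate. Then \eqref{kahandaviscor} follows by invoking the Davis--Kahan variant \cite[Corollary 1]{yu15} with the mixed gap $\delta^{(h;\nsamples)}_{\kllevelrun}$ in the denominator and replacing it by $\tfrac14\delta_{\kllevelrun}$ via \eqref{spectralgap3}. Finally, \eqref{phivecerror} is obtained from \eqref{kahandaviscor} through the observation that the map $\widetilde{\vec{\Phi}}\mapsto \bigl((\CholStiffnessL)^{-\transpose}\widetilde{\vec{\Phi}}\bigr)\cdot\vec{\theta}^{(h)}$ is an isometry from $(\R^{\dimVh},\|\cdot\|_2)$ onto $(\Vh,\|\cdot\|_{L^2(\dom)})$, which is exactly what underlies the orthonormality \eqref{philhmortho}.

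For the gap estimate I would bound each of the two terms in the definition of $\delta^{(h;\nsamples)}_{\kllevelrun}$ from below. Consider $\lambda^{(h)}_{\kllevelrun}-\lambda^{(h;\nsamples)}_{\kllevelrun+1}$ and telescope by inserting $\lambda_{\kllevelrun}$, $\lambda_{\kllevelrun+1}$ and $\lambda^{(h)}_{\kllevelrun+1}$:
\begin{align*}
\lambda^{(h)}_{\kllevelrun}-\lambda^{(h;\nsamples)}_{\kllevelrun+1}
&= (\lambda^{(h)}_{\kllevelrun}-\lambda_{\kllevelrun}) + (\lambda_{\kllevelrun}-\lambda_{\kllevelrun+1}) + (\lambda_{\kllevelrun+1}-\lambda^{(h)}_{\kllevelrun+1}) + (\lambda^{(h)}_{\kllevelrun+1}-\lambda^{(h;\nsamples)}_{\kllevelrun+1}).
\end{align*}
The middle difference is at least $\delta_{\kllevelrun}$ by \eqref{defn:gap}; the two Galerkin terms are controlled by \cref{prop:babuska} and the sampling term by Weyl's inequality \eqref{weyleigenvalue}. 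Using the monotonicity $\lambda_{\kllevelrun}^{-1}\le\lambda_{\kllevelrun+1}^{-1}$ to dominate both Galerkin errors by $C_1\lambda_{\kllevelrun+1}^{-1}h^{2s}$ and applying the standing assumption \eqref{condspectralgap1} would give $\lambda^{(h)}_{\kllevelrun}-\lambda^{(h;\nsamples)}_{\kllevelrun+1}\ge\delta_{\kllevelrun}-\tfrac12\delta_{\kllevelrun}-\tfrac14\delta_{\kllevelrun}=\tfrac14\delta_{\kllevelrun}$. The same bookkeeping applied to $\lambda^{(h;\nsamples)}_{\kllevelrun-1}-\lambda^{(h)}_{\kllevelrun}$, now inserting $\lambda^{(h)}_{\kllevelrun-1}$, $\lambda_{\kllevelrun-1}$ and $\lambda_{\kllevelrun}$ and using $\lambda_{\kllevelrun-1}^{-1},\lambda_{\kllevelrun}^{-1}\le\lambda_{\kllevelrun+1}^{-1}$, yields the same lower bound; the case $\kllevelrun=1$ is trivial because $\lambda_0=\lambda^{(h;\nsamples)}_0=\infty$. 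Taking the minimum proves \eqref{spectralgap3}.

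With \eqref{spectralgap3} in hand, \eqref{kahandaviscor} is immediate: $\tildstiffnessmatrix$ and $\tildestiffnessmatrixM$ are symmetric and differ in operator norm by exactly $\mathcal{E}_{(h;M)}$ (see \eqref{eq:errorstiffnessh}), and $\delta^{(h;\nsamples)}_{\kllevelrun}$ is precisely the mixed single-eigenvector separation appearing in \cite[Corollary 1]{yu15}, so that corollary gives $\|\widetilde{\vec{\Phi}}^{(h)}_{\kllevelrun}-\widetilde{\vec{\Phi}}^{(h;\nsamples)}_{\kllevelrun}\|_2\le C(\delta^{(h;\nsamples)}_{\kllevelrun})^{-1}\mathcal{E}_{(h;M)}$ once the sign of $\widetilde{\vec{\Phi}}^{(h;\nsamples)}_{\kllevelrun}$ is fixed by $\widetilde{\vec{\Phi}}^{(h)}_{\kllevelrun}\cdot\widetilde{\vec{\Phi}}^{(h;\nsamples)}_{\kllevelrun}\ge0$; inverting \eqref{spectralgap3} turns $(\delta^{(h;\nsamples)}_{\kllevelrun})^{-1}$ into $4\delta_{\kllevelrun}^{-1}$. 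For \eqref{phivecerror} I would compute $\|\phi^{(h)}_{\kllevelrun}-\phi^{(h;\nsamples)}_{\kllevelrun}\|_{L^2(\dom)}^2$ directly: writing the difference as $\bigl((\CholStiffnessL)^{-\transpose}\vec{W}\bigr)\cdot\vec{\theta}^{(h)}$ with $\vec{W}=\widetilde{\vec{\Phi}}^{(h)}_{\kllevelrun}-\widetilde{\vec{\Phi}}^{(h;\nsamples)}_{\kllevelrun}$ and using $\massmatrix=\CholStiffnessL(\CholStiffnessL)^{\transpose}$ collapses the Gram matrix to the identity, so $\|\phi^{(h)}_{\kllevelrun}-\phi^{(h;\nsamples)}_{\kllevelrun}\|_{L^2(\dom)}=\|\vec{W}\|_2$, and \eqref{kahandaviscor} finishes the proof.

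The only genuinely delicate point is the gap estimate: one must make sure every perturbation term ($\lambda_{\kllevelrun-1}$ versus $\lambda^{(h)}_{\kllevelrun-1}$, $\lambda^{(h)}_{\kllevelrun+1}$ versus $\lambda^{(h;\nsamples)}_{\kllevelrun+1}$, and so on) is charged to the correct bound and that the eigenvalue-reciprocal factors are uniformly dominated by $\lambda_{\kllevelrun+1}^{-1}$, so that the single assumption \eqref{condspectralgap1} suffices to absorb all of them into $\tfrac34\delta_{\kllevelrun}$. Everything else is a direct citation (Yu--Wang--Samworth) or the Cholesky isometry, both routine once the gap bound is secured.
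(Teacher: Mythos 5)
Your proposal is correct and follows essentially the same route as the paper's proof: the gap estimate rests on the same ingredients (triangle-inequality bookkeeping with \cref{prop:babuska}, Weyl's inequality \eqref{weyleigenvalue}, and assumption \eqref{condspectralgap1}), the eigenvector bound is the same invocation of \cite[Corollary 1]{yu15}, and the $L^2$ statement is the same Cholesky isometry $\normL{\phi_{\kllevelrun}^{(h)}-\phi_{\kllevelrun}^{(h;\nsamples)}}{\dom}=\left\| \widetilde{\vec{\Phi}}^{(h)}_{\kllevelrun}-\widetilde{\vec{\Phi}}^{(h;\nsamples)}_{\kllevelrun}\right\|_{2}$. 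The only (organizational) difference is that you telescope each term of $\delta^{(h;\nsamples)}_{\kllevelrun}$ in a single four-term chain, charging the two Galerkin errors and the sampling error against $\tfrac{3}{4}\delta_{\kllevelrun}$ at once, whereas the paper passes through the intermediate semi-discrete gap in two steps, showing $\delta^{(h)}_{\kllevelrun}\ge \tfrac{1}{2}\delta_{\kllevelrun}$ and then $\delta^{(h;\nsamples)}_{\kllevelrun}\ge \tfrac{1}{2}\delta^{(h)}_{\kllevelrun}$, with identical ingredients and the same final constant $\tfrac{1}{4}$.
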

\begin{proof}
The first statement is a generalization of an argument presented in \cite{yu15} to the case of multiple discretization parameters.
Recall the discrete spectral gap $\delta^{(h;\nsamples)}_{\kllevelrun}$ in \eqref{spectralgap1}. 
We show \eqref{spectralgap3} using the following strategy:
First we observe
$
\delta_{\kllevelrun}
\le \left|\lambda_{\kllevelrun-1}-\lambda^{(h)}_{\kllevelrun-1}\right|+\left|\lambda^{(h)}_{\kllevelrun-1}-\lambda^{(h)}_{\kllevelrun}\right|+\left|\lambda^{(h)}_{\kllevelrun}- \lambda_{\kllevelrun} \right|
$
by the triangle inequality.
Similarly, we get
$
\delta_{\kllevelrun}
\le \left|\lambda_{\kllevelrun}-\lambda^{(h)}_{\kllevelrun}\right|+\left|\lambda^{(h)}_{\kllevelrun}-\lambda^{(h)}_{\kllevelrun+1}\right|+\left|\lambda^{(h)}_{\kllevelrun+1}- \lambda_{\kllevelrun+1} \right|.
$
Using \cref{prop:babuska} and the fact that the eigenvalues are sorted, i.e. $\lambda_{\kllevelrun-1} \ge \lambda_{\kllevelrun}\ge \lambda_{\kllevelrun+1}$, we obtain for all $0<h\leq h_0$ the bound
\begin{align*}
\max\left\{\left|\lambda_{\kllevelrun-1}-\lambda^{(h)}_{\kllevelrun-1}\right|,\left|\lambda^{(h)}_{\kllevelrun}- \lambda_{\kllevelrun} \right|,\left|\lambda_{\kllevelrun+1}-\lambda^{(h)}_{\kllevelrun+1}\right| \right\} \le C_1 \lambda_{\kllevelrun+1}^{-1}h^{2s} \le  \frac{1}{4} \delta_{\kllevelrun},
\end{align*}
where we used the spectral gap assumption \eqref{condspectralgap1} in the last step.  
So we derive \\
$
\delta_{\kllevelrun} \le \min \left\{ \left|\lambda^{(h)}_{\kllevelrun-1}-\lambda^{(h)}_{\kllevelrun}\right|,\left|\lambda^{(h)}_{\kllevelrun}-\lambda^{(h)}_{\kllevelrun+1}\right|\right\} + \frac{1}{2}\delta_{\kllevelrun} = \delta^{(h)}_{\kllevelrun} + \frac{1}{2}\delta_{\kllevelrun}.
$
This inequality implies
\begin{align}\label{spectralgap_auxiliary1}
\frac{1}{2} \delta_{\kllevelrun} \le \delta^{(h)}_{\kllevelrun} =\min
\left\{ \left|\lambda^{(h)}_{\kllevelrun-1}-\lambda^{(h)}_{\kllevelrun}\right|,\left|\lambda^{(h)}_{\kllevelrun}-\lambda^{(h)}_{\kllevelrun+1}\right|\right\}.
\end{align}
We have  
$
\delta^{(h)}_{\kllevelrun}
\le \left|\lambda^{(h)}_{\kllevelrun-1}-\lambda^{(h;\nsamples)}_{\kllevelrun-1}\right|+\left|\lambda^{(h;\nsamples)}_{\kllevelrun-1}- \lambda^{(h)}_{\kllevelrun} \right|
$
and
$
\delta^{(h)}_{\kllevelrun}
\le \left|\lambda^{(h)}_{\kllevelrun}-\lambda^{(h;\nsamples)}_{\kllevelrun}\right|+\left|\lambda^{(h;\nsamples)}_{\kllevelrun}- \lambda^{(h)}_{\kllevelrun+1} \right|
$
by the triangle inequality.
Using Weyl's Theorem, i.e. \eqref{weyleigenvalue}, and the spectral gap assumption \eqref{condspectralgap1}, we get
\begin{align*}
\max\left\{ \left|\lambda^{(h)}_{\kllevelrun-1}-\lambda^{(h;\nsamples)}_{\kllevelrun-1}\right|,\left|\lambda^{(h)}_{\kllevelrun}-\lambda^{(h;\nsamples)}_{\kllevelrun}\right| \right\}\le \left\| \tildstiffnessmatrix-\tildestiffnessmatrixM \right\|_{2 \to 2} \le \frac{1}{4}\delta_{\kllevelrun} \le \frac{1}{2}\delta^{(h)}_{\kllevelrun},
\end{align*}
where we used \eqref{spectralgap_auxiliary1} in the last step. So we get 
\begin{align*}
\delta^{(h;\nsamples)}_{\kllevelrun}= \min\left\{ \left|\lambda^{(h)}_{\kllevelrun}-\lambda^{(h;\nsamples)}_{\kllevelrun-1}\right|,\left|\lambda^{(h)}_{\kllevelrun+1}-\lambda^{(h;\nsamples)}_{\kllevelrun}\right| \right\}\ge \frac{1}{2}\delta^{(h)}_{\kllevelrun} \ge \frac{1}{4}\delta_{\kllevelrun},
\end{align*}
with \eqref{spectralgap_auxiliary1} again. We also have 
$
\left\|\phi_{\kllevelrun}^{(h)}- \phi_{\kllevelrun}^{(h;\nsamples)} \right\|^2_{L_{2}(\dom)}
=  \left\| \widetilde{\Phi}^{(h)}_{\kllevelrun}-\widetilde{\Phi}^{(h;\nsamples)}_{\kllevelrun}\right\|^2_{\ell_2}.
$
The other two statements follow directly from the Davis-Kahan theorem, as in \cite[Corollary1]{yu15}.
\end{proof}
Finally, we are left with bounding
\begin{align}\label{tildeSopnorm}
\mathcal{E}_{(h;M)}=\left\| \tildstiffnessmatrix-\tildestiffnessmatrixM \right\|_{2\to 2}
=\sup_{\vec{y} \in \R^{\dimVh}\setminus \{ \vec{0}\}} \frac{\left( \left( \vec{\Sigma}_{\vec{\lnrfrv}^{(h)}}- \vec{\Sigma}^{(h;\nsamples)}_{\vec{\lnrfrv}^{(h)}}\right) \vec{y},\vec{y}\right)_{\ell_2}}{\|
\left(\CholStiffnessL\right)^{-1}  \vec{y}\|_{\ell_2}^2},
\end{align}
where we use the identity
$
\tildstiffnessmatrix-\tildestiffnessmatrixM:=\left(\CholStiffnessL\right)^{\transpose}\left(\vec{\Sigma}_{\vec{\lnrfrv}^{(h)}}- \vec{\Sigma}^{(h;\nsamples)}_{\vec{\lnrfrv}^{(h)}}\right)\CholStiffnessL 
$
and the fact that $ \vec{y}=\CholStiffnessL \vec{x}$ is admissible since $\CholStiffnessL$ is regular.
Note here that\\
$
\lambda_{\min}\left(\left(\massmatrix\right)^{-1}\right) \|\vec{y}\|^{2} \le \left(\left( \massmatrix\right)^{-1}\vec{y},\vec{y} \right) \le \lambda_{\max}\left(\left(\massmatrix\right)^{-1}\right)\|\vec{y}\|^{2}
$ holds.  
We now have the bound
\begin{align}\label{lieqneu}
	\mathcal{E}_{(h;M)} \in \left[ \lambda_{\min}\left(\massmatrix\right), \lambda_{\max}\left(\massmatrix\right) \right] \left\| \vec{\Sigma}_{\vec{\lnrfrv}^{(h)}}- \vec{\Sigma}^{(h;\nsamples)}_{\vec{\lnrfrv}^{(h)}}  \right\|_{2\to 2} .
\end{align}
The matrix norm $ \left\| \vec{\Sigma}_{\vec{\lnrfrv}^{(h)}}- \vec{\Sigma}^{(h;\nsamples)}_{\vec{\lnrfrv}^{(h)}}  \right\|_{2\to 2}$
is estimated in probabilistic terms in 
\cref{subsec:approxcovhMfinal}.

\subsection{Sub-Gaussian tails for observed random vectors}\label{sec:tails}
In order to use recent results to bound the sampling error, we must first consider the sub-Gaussian property of the random  vector $\vec{\lnrfrv}^{(h)}$. 
This random vector consists of the coefficients of the discrete random field $\lnrf^{(h)}:\Omega \times \dom \to \R$, i.e. 
$\lnrf^{(h)}(\vec{\om},\vec{x})= \sum_{\dimVhrun=1}^{\dimVh} \lnrfrv^{(h)}_{\dimVhrun}(\vec{\om}) \theta_{\dimVhrun}^{(h)}(\vec{x}) = \vec{\lnrfrv}^{(h)}(\vec{\om}) \cdot \vec{\theta}(\vec{x}).
$
We focus mainly on two different basis functions. First, we choose $\{\theta^{(h)}_{\dimVhrun}\}_{1\le \dimVhrun \le \dimVh}$ to be a nodal basis, and second, we choose $\{\theta^{(h)}_{\dimVhrun}\}_{1\le \dimVhrun \le \dimVh}$ to be an $L^{2}$-orthonormal basis.
\subsubsection{Nodal basis}
To start, let us consider the setting of the so-called standard information. Recall the point set $X_{\dimVh}=\left\{ \vec{x}_1,\dots,\vec{x}_{\dimVh}\right\}\subset \dom$, which determines the nodal basis functions and thus the degrees of freedom for the finite element space $\Vh$.
In this case, 
$
\lnrf^{(h)}(\vec{\om},\vec{x}) = \vec{\lnrfrv}^{(h)}(\vec{\om}) \cdot \vec{\theta}(\vec{x})=\sum_{\dimVhrun=1}^{\dimVh} \lnrf^{(h)}_{\dimVhrun}(\vec{\om},\vec{x}_{\dimVhrun})\theta_{\dimVhrun}^{(h)}(\vec{x}).
$
So we have
\begin{align}\label{observedrandomvectornodal}
\vec{\lnrfrv}^{(h)}: \Omega \to \R^{\dimVh}, \quad \vec{\om} \mapsto \vec{\lnrfrv}^{(h)}(\vec{\om})=\left(\lnrf^{(h)}(\vec{\om},\vec{x}_1),\dots,\lnrf^{(h)}(\vec{\om},\vec{x}_{\dimVh}) \right)^{\transpose}.
\end{align}
Since $\lnrf^{(h)}$ is assumed to be a centered Gaussian random field, it follows by definition that the random vector $\vec{\lnrfrv}^{(h)}$ is also  distributed according to a multivariate Gaussian law, i.e. 
$\vec{\lnrfrv}^{(h)}\sim \mathcal{N}\left(\vec{0},\vec{\Sigma}_{\vec{K}^{(h)}} \right)$
with \eqref{truecovariance}, i.e. 
$
\vec{\Sigma}_{\vec{K}^{(h)}}
=\expect{\vec{\lnrfrv}^{(h)} \otimes \vec{\lnrfrv}^{(h)} }.
$
We can invoke \cref{A:00} and \eqref{truecovariance}, i.e. $\left( \vec{\Sigma}_{\vec{K}^{(h)}}\right)_{\dimVhrun,\dimVhrun^{\prime}} \le 4(\constC^{(h)})^2$ . 
Finally, by Chernoff's inequality \cite{cherstat} we get the bound
$$\prob\left\{  \vec{v} \cdot\left( \vec{\lnrfrv}^{(h)}-\expect{\vec{\lnrfrv}^{(h)}}\right) > t\right\} \le  \exp\left(-\frac{t^2}{8 (\constC^{(h)})^2} \right)$$
for any $\vec{v} $ with $\| \vec{v} \|_2 =1$.
Note that this implies {$4 (\constC^{(h)})^2=\rho^{-1}$} in the sense of \cite[Definition 1]{cai2016}.
\subsubsection{$L^2$-orthonormal basis}
Now let us consider the setting of so-called linear information. Here, we cannot directly rely on the definition of a centered Gaussian random field to infer that the coefficients are distributed with a multivariate normal law.
In this case, we have
\begin{align*}
\vec{\lnrfrv}^{(h)}:\Omega \to \R^{\dimVh}, \quad \vec{\om} &\mapsto \left(  (\lnrf^{(h)} (\vec{\om},\cdot) ,\theta^{(h)}_{1})_{\dom} , \dots, ( \lnrf^{(h)} (\vec{\om},\cdot) ,\theta^{(h)}_{\dimVh} )_{\dom}\right)^{\transpose},
\end{align*}
where $\{\theta_j^{(h)}, j=1,\ldots,\dimVh\}$ is an $L_2$-orthonormal basis.
Thus, for the expected value we observe $\expect{\lnrfrv^{(h)}_{\dimVhrun}}=0$
which implies $\expect{\vec{\lnrfrv}^{(h)}}=0$.
For the variance we have again \eqref{truecovariance}, i.e. 
$
\vec{\Sigma}_{\vec{K}^{(h)}}=\expect{\vec{\lnrfrv}^{(h)} \otimes \vec{\lnrfrv}^{(h)} }$,
where
\begin{equation}\label{li2neu}
	\left(\vec{\Sigma}_{\vec{K}^{(h)}}\right)_{k,k^{\prime}} =\int_{\dom} \int_{\dom}
	\text{Cov}_{\lnrf^{(h)}}(\vec{x},\vec{x}^{\prime}) \theta_{k}^{h}(\vec{x})\theta_{k^{\prime}}^{h}(\vec{x}^{\prime}) \mathrm{d}\vec{x} \mathrm{d} \vec{x}^{\prime}
\end{equation}
and thus
$\left(\vec{\Sigma}_{\vec{K}^{(h)}}\right)_{k,k^{\prime}} \leq 
	\|\text{Cov}_{\lnrf^{(h)}}\|_{L^2(\dom \times \dom)} \leq 4 (\constC^{(h)})^2 \cdot | \dom|.$
For all $\vec{v} \in \R^{\dimVh}$ with $\| \vec{v}\|_2=1$, we have
\begin{align*}
\vec{v}\cdot \left(\vec{\lnrfrv}^{(h)}-\expect{\vec{\lnrfrv}^{(h)}}\right)
	&=\sum_{k=1}^{\dimVh} \left(\lnrf^{(h)}(\om_j,\cdot), \theta_j^{(h)} \right)_{L^2(\dom)} v_j \sim \mathcal{N} (0,\vec{v}\cdot \vec{\Sigma}_{\vec{K}^{(h)}}\vec{v}).
\end{align*}
Furthermore, given $\vec{x} \in \dom$, it holds that
$
\lnrf^{(h)}(\cdot,\vec{x}) \sim \mathcal{N}\left(0,\text{Cov}_{\lnrf^{(h)}}(\vec{x},\vec{x})\right).
$
Therefore, we can invoke Chernoff's inequality to obtain for all $\vec{x} \in \dom$ and $t>0$
\begin{align*}
\prob \left\{ \ \left| \lnrf^{(h)} (\omega,\vec{x}) \right| >t  \right\}
&\le  \exp\left(-\frac{t^2}{2\text{Cov}_{\lnrf^{(h)}}(\vec{x},\vec{x})} \right)\le  \exp\left(-\frac{t^2}{8(\constC^{(h)})^2|\dom| } \right),
\end{align*}
which is uniform for $\vec{x}\in \dom$, see \eqref{li2neu} and its consequence. Consequently,  for all $\vec{v} \in \R^{\dimVh} $  with  $\| \vec{v}\|_2=1$, we have the estimate
	$$\prob\left\{  \left| \vec{v} \cdot\left( \vec{\lnrfrv}^{(h)}-\expect{\vec{\lnrfrv}^{(h)}}\right) \right|  > t\right\} \le \exp\left(-\frac{t^2}{{8  (\constC^{(h)}})^2 |\dom| } \right).$$
Note that this implies $4(\constC^{(h)})^2 |\dom| 
 =\rho^{-1} $ in the sense of \cite[Definition 1]{cai2016}.
Thus, the random vector $\vec{\lnrfrv}^{(h)}$ obeys a sub-Gaussian bound in the sense of \cite[Eq. (7)]{cai2010}.
We will use this estimate together with the results of \cite{cai2010} to bound the variance  of the sampled covariance matrix later.

\subsection{Covariance estimation from samples using tapering and decay assumptions}\label{subsec:approxcovhMfinal}
In this subsection, we focus on the sampling error, and specifically on the induced variance of our estimator for the covariance matrix. We assume now that the parameter $h$ is fixed. Thus, the dimension of the covariance matrix is given by $\dimVh$. We will provide a constructive approach to obtain an estimate of the covariance matrix  $\vec{\Sigma}_{\vec{\lnrfrv}^{(h)}}$. To do this, given samples $\vec{\lnrfrv}^{(\nsamplesrun;h)}$ for $\nsamplesrun=1,\cdots,M$, let the sample mean and the maximum likelihood estimator for the covariance matrix $\vec{\Sigma}_{\vec{\lnrfrv}^{(h)}}$ be \cite[Eq. (2)]{cai2010}
\begin{align}\label{astest}
\bar{\vec{\lnrfrv}}^{(h;\nsamples)}&:=\frac{1}{\nsamples}\sum_{\nsamplesrun=1}^{\nsamples}\vec{\lnrfrv}^{(\nsamplesrun;h)} \in \R^{\dimVh}\quad \text{and} \\
\label{astest2}
\bar{\vec{\Sigma}}^{(h;\nsamples)}_{\vec{\lnrfrv}^{(h)}}&:= \frac{1}{\nsamples} \sum_{\nsamplesrun=1}^{\nsamples} \left(\vec{\lnrfrv}^{(\nsamplesrun;h)}-\bar{\vec{\lnrfrv}}^{(h)}\right)\otimes \left(\vec{\lnrfrv}^{(\nsamplesrun;h)}-\bar{\vec{\lnrfrv}}^{(h)}\right)\in \R^{\dimVh \times \dimVh}.
\end{align}
We know from \cite[Lemma 1]{cai2016} that the  direct covariance estimator \eqref{astest2} suffers from the curse of dimension with respect to high spatial resolution $\dimVh$. Consequently, we need a better estimator when $\dimVh$ is large. To this end, we follow \cite{cai2016} and assume an off-diagonal decay of the covariance matrix $\vec{\Sigma}_{\vec{\lnrfrv}^{(h)}} \in \R^{\dimVh \times \dimVh}$, i.e. we assume  $\vec{\Sigma}_{\vec{\lnrfrv}^{(h)}} \in \decayclass_{\alpha}=\decayclass_{\alpha}(C_{\decayclass;1},C_{\decayclass;2}) \subset \R^{\dimVh \times \dimVh} $, where
\begin{equation}\label{taperingclass}
\decayclass_{\alpha}:=\left\{\vec{\Sigma} \ : \ \max_{\dimVhrun=1,\dots,\dimVh}\sum_{\genfrac{}{}{0pt}{}{\dimVhrun^{\prime}=1}{\left|\dimVhrun^{\prime}-\dimVhrun \right|>c }}^{\dimVh}\left|\vec{\Sigma}_{\dimVhrun,\dimVhrun^{\prime}}\right| \le C_{\decayclass;1} c^{-\alpha}, \  \forall 1\le c \le \dimVh, \  \lambda_{\max}(\vec{\Sigma})\le C_{\decayclass;2}\right\}.
\end{equation}
Here $\lambda_{\max}(\vec{\Sigma})$ is the maximum eigenvalue of $\vec{\Sigma}$, and $\alpha$ modulates the decay rate, while $C_{\decayclass;1}$ and $C_{\decayclass;2}$ are positive parameters.
Now let the weights for any even integer $1\le \taperingint \le \dimVh$ (see \cite[(Eq. 5)]{cai2010}) be given as
\begin{align}\label{taperingweights}
\taperingweight_{\dimVhrun,\dimVhrun^{\prime}}:=\taperingweight_{\dimVhrun,\dimVhrun^{\prime}}(\tau):=
\begin{cases}1, & \left|\dimVhrun-\dimVhrun^{\prime} \right|\le \frac{\taperingint}{2}, \\
2\left(1-\frac{\left|\dimVhrun-\dimVhrun^{\prime} \right|}{\taperingint} \right), &  \frac{\taperingint}{2} < \left|\dimVhrun-\dimVhrun^{\prime} \right|<\taperingint ,\\
0, & \left|\dimVhrun-\dimVhrun^{\prime} \right|\ge \taperingint.
 \end{cases}
\end{align}
Then the associated tapering estimator (c.f. \cite[(Eq. 4)]{cai2010}) is defined element-wise by
\begin{align}
\label{taperingestimator}
\left(\vec{\Sigma}^{(h;\nsamples;\tau)}_{\vec{\lnrfrv}^{(h)}}\right)_{_{\dimVhrun,\dimVhrun^{\prime}}}:=\taperingweight_{\dimVhrun,\dimVhrun^{\prime}}(\tau)\left(\bar{\vec{\Sigma}}^{(h;\nsamples)}_{\vec{\lnrfrv}^{(h)}}\right)_{\dimVhrun,\dimVhrun^{\prime}}, \quad 1 \le \dimVhrun,\dimVhrun^{\prime} \le \dimVh.
\end{align}
Note that the estimator $\vec{\Sigma}^{(h;\nsamples;\tau)}_{\vec{\lnrfrv}^{(h)}}$ preserves self-adjointness. Furthermore, the bounds
\begin{align}
\expect{\left\|\vec{\Sigma}^{(h;\nsamples;\tau)}_{\vec{\lnrfrv}^{(h)}}-\expect{\vec{\Sigma}^{(h;\nsamples;\tau)}_{\vec{\lnrfrv}^{(h)}}} \right\|_{2 \to 2}^2}&\lesssim \frac{\taperingint+\log(\dimVh)}{\nsamples}\label{taperingerrorvar},\\
\left\|\expect{\vec{\Sigma}^{(h;\nsamples;\tau)}_{\vec{\lnrfrv}^{(h)}}}-\vec{\Sigma}_{\vec{\lnrfrv}^{(h)}} \right\|_{2\to 2}^2&\lesssim \taperingint^{-2\alpha} \label{taperingerrorbias}
\end{align}
on the variance and the bias hold, see \cite[(Eqs. 13 \& 14)]{cai2010}.
From the triangle inequality we derive the convergence of $\vec{\Sigma}^{(h;\nsamples;\tau)}_{\vec{\lnrfrv}^{(h)}}\to \vec{\Sigma}_{\vec{\lnrfrv}^{(h)}}$ in expectation as
\begin{align}\label{taperingerrorbiasplusvar}
&\expect{\left\|\vec{\Sigma}^{(h;\nsamples;\tau)}_{\vec{\lnrfrv}^{(h)}}-\vec{\Sigma}_{\vec{\lnrfrv}^{(h)}}\right\|_{2\to 2}^2} \le 2\left\|\expect{\vec{\Sigma}^{(h;\nsamples;\tau)}_{\vec{\lnrfrv}^{(h)}}}-\vec{\Sigma}_{\vec{\lnrfrv}^{(h)}} \right\|_{2\to 2}^2 \\&+2\expect{\left\|\vec{\Sigma}^{(h;\nsamples;\tau)}_{\vec{\lnrfrv}^{(h)}}-\expect{\vec{\Sigma}^{(h;\nsamples;\tau)}_{\vec{\lnrfrv}^{(h)}}} \right\|_{2\to 2}^2}\nonumber \lesssim \frac{\taperingint+\log(\dimVh)}{\nsamples}+\taperingint^{-2\alpha},
\end{align}
which motivates the choice of the tapering parameter $\tau=\nsamples^{\frac{1}{2\alpha+1}}$, see \cite[(Eq. 15)]{cai2010}.
This gives the following result, see \cite[Theorem 2 \& (Eq. 31)]{cai2010}:

\begin{proposition}\label{propsamplingexpec}
For the tapering estimator \eqref{taperingestimator} with $\tau=\nsamples^{\frac{1}{2\alpha+1}}$, i.e.
\begin{align}\label{optimaltaperingchoice}
	\left(\vec{\Sigma}^{(h;\nsamples)}_{\vec{\lnrfrv}^{(h)}}\right)_{_{\dimVhrun,\dimVhrun^{\prime}}}:=\taperingweight_{\dimVhrun,\dimVhrun^{\prime}}(\nsamples^{\frac{1}{2\alpha+1}} )\left(\bar{\vec{\Sigma}}^{(h;\nsamples)}_{\vec{\lnrfrv}^{(h)}}\right)_{\dimVhrun,\dimVhrun^{\prime}}, \quad 1 \le \dimVhrun,\dimVhrun^{\prime} \le \dimVh,
\end{align}
there holds the error estimate 
\begin{align}\label{taperingerrorbiasplusvaroptimal}
\expect{\left\|\vec{\Sigma}^{(h;\nsamples)}_{\vec{\lnrfrv}^{(h)}}-\vec{\Sigma}_{\vec{\lnrfrv}^{(h)}}\right\|_{2\to 2}^2} \lesssim \nsamples^{-\frac{2\alpha}{2\alpha+1}} + \frac{\log(\dimVh)}{\nsamples}
\end{align}
if the condition $\dimVh \ge  \nsamples^{\frac{1}{2\alpha+1}}$ is satisfied.
In the case  $\dimVh < \nsamples^{\frac{1}{2\alpha+1}}$, we can directly use the estimator from \eqref{astest2}
and get the bound
\begin{align}\label{taperingerrorbiasplusvaroptimal2}
\expect{\left\|\bar{\vec{\Sigma}}^{(h;\nsamples)}_{\vec{\lnrfrv}^{(h)}}-\vec{\Sigma}_{\vec{\lnrfrv}^{(h)}}\right\|_{2\to 2}^2} \lesssim \frac{\dimVh }{\nsamples}.
\end{align}
\end{proposition}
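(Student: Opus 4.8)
The plan is to obtain both estimates by optimizing the already-established two-term bound over the tapering parameter $\tau$, splitting into two regimes according to whether the optimal $\tau$ is an admissible tapering width.

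First, for the regime $\dimVh \ge \nsamples^{1/(2\alpha+1)}$, I would start from the combined bias--variance estimate \eqref{taperingerrorbiasplusvar},
\begin{align*}
\expect{\left\|\vec{\Sigma}^{(h;\nsamples;\tau)}_{\vec{\lnrfrv}^{(h)}}-\vec{\Sigma}_{\vec{\lnrfrv}^{(h)}}\right\|_{2\to 2}^2} \lesssim \frac{\taperingint+\log(\dimVh)}{\nsamples}+\taperingint^{-2\alpha},
\end{align*}
which holds for every admissible even integer $1 \le \tau \le \dimVh$ and follows from the variance bound \eqref{taperingerrorvar} and the bias bound \eqref{taperingerrorbias}. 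The remaining work is a single-variable optimization: balancing the dominant variance term $\tau/\nsamples$ against the bias term $\tau^{-2\alpha}$ gives the relation $\tau^{2\alpha+1}\sim \nsamples$, hence the choice $\tau=\nsamples^{1/(2\alpha+1)}$. Substituting this value, both $\tau/\nsamples$ and $\tau^{-2\alpha}$ reduce to $\nsamples^{-2\alpha/(2\alpha+1)}$, while the term $\log(\dimVh)/\nsamples$ is carried through unchanged, which yields \eqref{taperingerrorbiasplusvaroptimal}.

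The role of the hypothesis $\dimVh \ge \nsamples^{1/(2\alpha+1)}$ is exactly to make this choice legitimate: the tapering weights \eqref{taperingweights} are only defined for an even integer $1 \le \tau \le \dimVh$, so $\tau=\nsamples^{1/(2\alpha+1)}$ is admissible precisely under this assumption (rounding to the nearest even integer affects only the constants, not the rate). When instead $\dimVh < \nsamples^{1/(2\alpha+1)}$, no admissible taper realizes the balanced width and tapering offers no gain; here I would simply use the direct maximum-likelihood estimator \eqref{astest2}. The bound $\expect{\|\bar{\vec{\Sigma}}^{(h;\nsamples)}_{\vec{\lnrfrv}^{(h)}}-\vec{\Sigma}_{\vec{\lnrfrv}^{(h)}}\|_{2\to 2}^2}\lesssim \dimVh/\nsamples$ is the standard operator-norm concentration inequality for sample covariance matrices of sub-Gaussian vectors, which applies thanks to the sub-Gaussian tail estimates established in \cref{sec:tails}, as in \cite[Theorem 2 \& (Eq. 31)]{cai2010}.

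Since the genuinely analytic ingredients---the variance estimate \eqref{taperingerrorvar} and the bias estimate \eqref{taperingerrorbias}---are imported from \cite{cai2010}, I do not anticipate a substantial obstacle: the argument reduces to an elementary optimization of a two-term upper bound together with a case distinction. The only delicate point is the bookkeeping around the admissibility of $\tau$ (even, integer, $\le \dimVh$), which is exactly what the regime condition $\dimVh \ge \nsamples^{1/(2\alpha+1)}$ controls.
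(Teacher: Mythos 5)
Your proposal is correct and follows essentially the same route as the paper: the paper also obtains \eqref{taperingerrorbiasplusvaroptimal} by inserting the balancing choice $\tau=\nsamples^{\frac{1}{2\alpha+1}}$ into the bias--variance bound \eqref{taperingerrorbiasplusvar} (itself assembled from \eqref{taperingerrorvar} and \eqref{taperingerrorbias}, imported from Cai--Zhang--Zhou), and it likewise disposes of the regime $\dimVh < \nsamples^{\frac{1}{2\alpha+1}}$ by falling back on the direct estimator \eqref{astest2} with the standard sub-Gaussian sample-covariance bound $\dimVh/\nsamples$, citing \cite[Theorem 2 \& (Eq. 31)]{cai2010}. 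Your additional bookkeeping on the admissibility of $\tau$ (even integer, at most $\dimVh$) is a reasonable elaboration of why the case distinction is drawn exactly at $\dimVh \ge \nsamples^{\frac{1}{2\alpha+1}}$, but it does not change the argument.
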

\noindent Note that the result is actually \emph{rate optimal}.

Next, we introduce the notation
\begin{align}\label{rhoM}
\rho_{h}(\nsamples)&:=\begin{cases}\nsamples^{-\frac{2\alpha}{2\alpha+1}} + \frac{\log(\dimVh)}{\nsamples}, &  \dimVh \ge \nsamples^{\frac{1}{2\alpha+1}} \\  \frac{\dimVh }{\nsamples}, & \dimVh < \nsamples^{\frac{1}{2\alpha+1}} \end{cases} \\ \label{tilderhoM}& \lesssim \tilde{\rho}_{h}(\nsamples):=\begin{cases}\nsamples^{-\frac{2\alpha}{2\alpha+1}} + \frac{d\log(h^{-1})}{\nsamples}, &  \dimVh \ge \nsamples^{\frac{1}{2\alpha+1}} \\  \frac{h^{-d} }{\nsamples}, & \dimVh < \nsamples^{\frac{1}{2\alpha+1}} \end{cases} ,
\end{align}
using $\dimVh\sim s^d h^{-d}$ for the definition of $\tilde{\rho}_{h}(\nsamples)$.
Then we have the following corollary:
\begin{corollary}\label{corboundtildeS}
For the tapering estimator \eqref{taperingestimator} with $\tau=\nsamples^{\frac{1}{2\alpha+1}}$, the bound
\begin{align}
\label{errorexpec}
\expect{ \mathcal{E}_{(h;M)}^{k}} &\lesssim \rho^{\frac{k}{2}}_{h}(\nsamples) ~ \lambda^{k}_{\max}\left(\massmatrix\right), \quad k \in \{1,2\}
\end{align}
holds.
\end{corollary}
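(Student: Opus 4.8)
The plan is to combine the deterministic transfer bound \eqref{lieqneu}, which relates $\mathcal{E}_{(h;M)}$ to the covariance matrix estimation error, with the probabilistic second-moment estimate of \cref{propsamplingexpec}. The crucial observation is that the upper endpoint of the interval in \eqref{lieqneu} supplies the $\om$-wise (hence deterministic-coefficient) inequality
\[
\mathcal{E}_{(h;M)} \le \lambda_{\max}\left(\massmatrix\right) \left\| \vec{\Sigma}_{\vec{\lnrfrv}^{(h)}}- \vec{\Sigma}^{(h;\nsamples)}_{\vec{\lnrfrv}^{(h)}}  \right\|_{2\to 2},
\]
in which the factor $\lambda_{\max}(\massmatrix)$ is a fixed number (the mesh is frozen throughout \cref{subsec:approxcovhMfinal}). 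Consequently, every moment of $\mathcal{E}_{(h;M)}$ is controlled by the corresponding moment of the matrix estimation error, up to the appropriate power of $\lambda_{\max}(\massmatrix)$, and the corollary reduces to reading off the moment bounds already established for that error.

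For $k=2$ I would simply square the displayed inequality and take expectations, pulling the deterministic factor out:
\[
\expect{\mathcal{E}_{(h;M)}^2} \le \lambda_{\max}^2\left(\massmatrix\right)\, \expect{\left\| \vec{\Sigma}_{\vec{\lnrfrv}^{(h)}}- \vec{\Sigma}^{(h;\nsamples)}_{\vec{\lnrfrv}^{(h)}}  \right\|_{2\to 2}^2}.
\]
The right-hand expectation is bounded by a constant multiple of $\rho_{h}(\nsamples)$ by virtue of \eqref{taperingerrorbiasplusvaroptimal}–\eqref{taperingerrorbiasplusvaroptimal2} and the definition \eqref{rhoM}, which yields exactly the claimed estimate $\expect{\mathcal{E}_{(h;M)}^2}\lesssim \rho_{h}(\nsamples)\,\lambda_{\max}^2(\massmatrix)$.

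For $k=1$ the same pointwise inequality gives $\expect{\mathcal{E}_{(h;M)}} \le \lambda_{\max}(\massmatrix)\,\expect{\left\|\vec{\Sigma}_{\vec{\lnrfrv}^{(h)}}-\vec{\Sigma}^{(h;\nsamples)}_{\vec{\lnrfrv}^{(h)}}\right\|_{2\to 2}}$, and the only additional step is to descend from the available $L^2$-control to the first moment. Here I would invoke Jensen's inequality for the concave map $t\mapsto\sqrt{t}$, obtaining $\expect{\left\|\cdot\right\|_{2\to2}}\le \left(\expect{\left\|\cdot\right\|_{2\to2}^2}\right)^{1/2}\lesssim \rho_{h}^{1/2}(\nsamples)$, which produces the $k=1$ bound with the square-root power $\rho_{h}^{1/2}(\nsamples)$.

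In truth there is no substantial obstacle: both nontrivial ingredients—the deterministic bound \eqref{lieqneu} that converts the covariance estimation rate into a stiffness-matrix error, and the rate-optimal covariance estimate of \cref{propsamplingexpec}—are already available, so the corollary is a direct consequence of their composition. The single point requiring care is the direction of Jensen's inequality in the $k=1$ case, so as to recover precisely the half-power $\rho_{h}^{1/2}$ rather than a weaker rate; the $k=2$ case is an immediate substitution.
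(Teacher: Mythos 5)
Your proposal is correct and follows essentially the same route as the paper's proof: the pointwise bound from \eqref{lieqneu} with the deterministic factor $\lambda_{\max}\left(\massmatrix\right)$, squaring and taking expectations for $k=2$ via \cref{propsamplingexpec} and \eqref{rhoM}, and Jensen's inequality to pass from the second moment to the first for $k=1$. The only cosmetic difference is that you apply Jensen (concave $\sqrt{t}$) to the matrix-norm error while the paper applies the equivalent convex form $\left(\expect{X}\right)^2\le\expect{X^2}$ to $\mathcal{E}_{(h;M)}$ itself; the resulting bounds are identical.
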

\begin{proof}
	First we get by \eqref{lieqneu} 
\begin{align}\label{eq:zzz1}
 \mathcal{E}_{(h;M)}  \le \lambda_{\max}\left(\massmatrix\right) 
	\left\| \vec{\Sigma}^{(h;\nsamples)}_{\vec{\lnrfrv}^{(h)}}-\vec{\Sigma}_{\vec{\lnrfrv}^{(h)}}  \right\|_{2 \to 2} .
\end{align}
We can use Jensen's inequality for the convex function $\psi(x)=x^2$ to get
$
\psi\left( \expect{X}\right) \le \expect{\psi(X)}.
$
Plugging \eqref{eq:zzz1} into Jensen's inequality  yields
\begin{align*}
\left(\expect{\mathcal{E}_{(h;M)}}\right)^2 \le 
\expect{ \left\| \left(\vec{\Sigma}^{(h;\nsamples)}_{\vec{\lnrfrv}^{(h)}}-\vec{\Sigma}_{\vec{\lnrfrv}^{(h)}} \right)  \right\|^2_{2 \to 2} } \lambda_{\max}^2 \left(\massmatrix\right) .
\end{align*}
So we get from \cref{propsamplingexpec} and with the definition \eqref{rhoM} the bound
\begin{align*}
\expect{\mathcal{E}_{(h;M)}} \lesssim \rho^{\frac{1}{2}}_{h}(\nsamples)~\lambda_{\max}\left(\massmatrix\right) .
\end{align*}
Another application of Jensen's inequality completes the proof.
\end{proof}

Next, we need to verify that Assumption \eqref{condspectralgap1}
holds, at least with high probability. Now, we have the following result:
\begin{proposition}\label{prop:spectralgap}
Let the meshsize $h$ be sufficiently small and $\nsamples$ sufficiently large, 
so that for all $1\le \kllevelrun \le \kllevel$ we have
\begin{align}\label{condspectralgapfinal1}
  \frac{\delta_{\kllevelrun}}{2}\ge 4C_1 h^{2s} \lambda_{\kllevel+1}^{-1}   \text{ and } 
  C \nsamples^{-\frac{2\alpha}{2\alpha+1}} \le \frac{\min_{1\le \kllevelrun \le \kllevel }\delta_{\kllevelrun}}{16 ~ \lambda_{\max}\left(\massmatrix\right) } .
\end{align}
Then we have with probability
\begin{align}\label{eq:ppp}
	p_0:=1- 2 \dimVh 5^{\taperingint}\exp\left(- \nsamples  \rho_1  \left( \frac{\min_{1\le \kllevelrun \le \kllevel }\delta_{\kllevelrun}}{48~\lambda_{\max}\left(\massmatrix\right) } \right)^2\right), \text{ where } \tau = \nsamples^{\frac{1}{2\alpha+1}},
\end{align}
that the condition \eqref{condspectralgap1} in our spectral gap  
\cref{ass:spectralgap} is satisfied.
\end{proposition}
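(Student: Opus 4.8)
The plan is to split the spectral-gap requirement \eqref{condspectralgap1} into a deterministic Galerkin contribution and a random sampling contribution, and to certify each with one of the two hypotheses in \eqref{condspectralgapfinal1}. Writing \eqref{condspectralgap1} as $\delta_{\kllevelrun}\ge 4C_1 h^{2s}\lambda_{\kllevelrun+1}^{-1}+4\mathcal{E}_{(h;M)}$, I would first use that the eigenvalues are non-increasing, so $\lambda_{\kllevelrun+1}^{-1}\le\lambda_{\kllevel+1}^{-1}$ for every $1\le\kllevelrun\le\kllevel$. Hence the first inequality in \eqref{condspectralgapfinal1} already yields $4C_1 h^{2s}\lambda_{\kllevelrun+1}^{-1}\le\tfrac12\delta_{\kllevelrun}$ uniformly in $\kllevelrun$, which disposes of the deterministic term. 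It then remains to show that, with probability at least $p_0$, the random term obeys $4\mathcal{E}_{(h;M)}\le\tfrac12\min_{1\le\kllevelrun\le\kllevel}\delta_{\kllevelrun}$, i.e.\ $\mathcal{E}_{(h;M)}\le\tfrac18\min_{\kllevelrun}\delta_{\kllevelrun}$.

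To obtain such a tail bound I would pass from $\mathcal{E}_{(h;M)}$ to the covariance estimation error via the deterministic comparison \eqref{lieqneu} (equivalently \eqref{eq:zzz1}), namely $\mathcal{E}_{(h;M)}\le\lambda_{\max}(\massmatrix)\,\|\vec{\Sigma}^{(h;\nsamples)}_{\vec{\lnrfrv}^{(h)}}-\vec{\Sigma}_{\vec{\lnrfrv}^{(h)}}\|_{2\to2}$, so that it suffices to keep $\|\vec{\Sigma}^{(h;\nsamples)}_{\vec{\lnrfrv}^{(h)}}-\vec{\Sigma}_{\vec{\lnrfrv}^{(h)}}\|_{2\to2}$ below $\min_{\kllevelrun}\delta_{\kllevelrun}/(8\lambda_{\max}(\massmatrix))$. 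I would then split this operator-norm error into its deterministic bias and its random fluctuation, $\|\vec{\Sigma}^{(h;\nsamples)}_{\vec{\lnrfrv}^{(h)}}-\vec{\Sigma}_{\vec{\lnrfrv}^{(h)}}\|_{2\to2}\le\|\expect{\vec{\Sigma}^{(h;\nsamples)}_{\vec{\lnrfrv}^{(h)}}}-\vec{\Sigma}_{\vec{\lnrfrv}^{(h)}}\|_{2\to2}+\|\vec{\Sigma}^{(h;\nsamples)}_{\vec{\lnrfrv}^{(h)}}-\expect{\vec{\Sigma}^{(h;\nsamples)}_{\vec{\lnrfrv}^{(h)}}}\|_{2\to2}$. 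With the tapering choice $\taperingint=\nsamples^{1/(2\alpha+1)}$, the bias is bounded deterministically through \eqref{taperingerrorbias}, and the second inequality in \eqref{condspectralgapfinal1} is the quantitative requirement forcing this bias to consume at most half of the total budget (hence the denominator $16=8\cdot 2$). After the additional universal factor lost in the net comparison below (a further factor $3$), what is left for the fluctuation is the scale $\min_{\kllevelrun}\delta_{\kllevelrun}/(48\lambda_{\max}(\massmatrix))$ appearing in the exponent of $p_0$.

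The heart of the argument, and the step I expect to be the main obstacle, is the concentration inequality for the fluctuation $\|\vec{\Sigma}^{(h;\nsamples)}_{\vec{\lnrfrv}^{(h)}}-\expect{\vec{\Sigma}^{(h;\nsamples)}_{\vec{\lnrfrv}^{(h)}}}\|_{2\to2}$ at the scale $t=\min_{\kllevelrun}\delta_{\kllevelrun}/(48\lambda_{\max}(\massmatrix))$. Here I would exploit that the tapering estimator \eqref{taperingestimator} is banded with bandwidth $\taperingint$: its quadratic form in a unit vector only sees a window of $\taperingint$ coordinates, so the operator norm can be recovered, up to a universal constant, by maximising $|\vec{v}\cdot(\vec{\Sigma}^{(h;\nsamples)}_{\vec{\lnrfrv}^{(h)}}-\expect{\vec{\Sigma}^{(h;\nsamples)}_{\vec{\lnrfrv}^{(h)}}})\vec{v}|$ over an $\epsilon$-net of the sphere restricted to each band. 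Counting these nets (of cardinality at most $5^{\taperingint}$ per band) together with the $\mathcal{O}(\dimVh)$ bands produces, via a union bound, the combinatorial prefactor $2\dimVh 5^{\taperingint}$, the factor $2$ accounting for the two-sided deviation. For a fixed net vector $\vec{v}$, the quadratic form is an average over the $\nsamples$ i.i.d.\ samples of centred squares $(\vec{v}\cdot\vec{\lnrfrv}^{(\nsamplesrun;h)})^2$, which are sub-exponential because the $\vec{\lnrfrv}^{(h)}$ are sub-Gaussian by the estimates of \cref{sec:tails}; a Bernstein-type bound then yields a tail $\exp(-\nsamples\rho_1 t^2)$ in the relevant small-$t$ regime, with $\rho_1$ determined by the sub-Gaussian parameter $4(\constC^{(h)})^2$ (respectively $4(\constC^{(h)})^2|\dom|$). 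This is precisely the concentration estimate underlying \cite[Theorem 2 and (Eq.\ 31)]{cai2010}, which I would either invoke directly or reprove via the net-plus-Bernstein scheme just described.

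Assembling the three pieces — the deterministic Galerkin bound, the deterministic bias bound, and the fluctuation tail bound — and taking the union bound over the nets and the bands, I would conclude that with probability at least $p_0$ one has $\mathcal{E}_{(h;M)}\le\tfrac18\min_{\kllevelrun}\delta_{\kllevelrun}$, whence \eqref{condspectralgap1} holds for all $1\le\kllevelrun\le\kllevel$. The bookkeeping of the numerical constants ($8$, $16$, $48$) is then routine, once the apportioning of the budget between bias and fluctuation and the constant lost in the net comparison have been fixed.
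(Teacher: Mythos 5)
Your proposal is correct and follows essentially the same route as the paper's own proof: the same reduction of \eqref{condspectralgap1} via \eqref{lieqneu} together with the first inequality in \eqref{condspectralgapfinal1}, the same bias/fluctuation splitting of $\|\vec{\Sigma}^{(h;\nsamples)}_{\vec{\lnrfrv}^{(h)}}-\vec{\Sigma}_{\vec{\lnrfrv}^{(h)}}\|_{2\to 2}$ with the tapering bias bound \eqref{taperingerrorbias} absorbed by the second inequality, and the same concentration estimate for the fluctuation (the paper likewise imports it by repeating Lemmata 2, 3 and Remark 1 of Cai--Zhang--Zhou rather than reproving the net-plus-Bernstein argument). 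Your accounting of the constants, with $48 = 8\cdot 2\cdot 3$ coming from the budget factor $8$, the even bias/fluctuation split, and the factor $3$ hidden in the exponent of the concentration bound, matches the paper exactly.
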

\begin{proof}
Given the estimate \eqref{lieqneu}
it is sufficient to ensure that
\begin{align*}
\delta_{\kllevelrun}\ge 4C_1h^{2s} \lambda_{\kllevelrun+1}^{-1} +4 \left\| \vec{\Sigma}^{(h;\nsamples)}_{\vec{\lnrfrv}^{(h)}}-\vec{\Sigma}_{\vec{\lnrfrv}^{(h)}}   \right\|_{2 \to 2} \lambda_{\max}\left(\massmatrix\right)   \quad \text{for all } 1\le \kllevelrun \le \kllevel.
\end{align*}
The condition \eqref{condspectralgapfinal1} reduces this to the estimate
$\frac{\min_{1\le \kllevelrun \le \kllevel }\delta_{\kllevelrun}}{8~ \lambda_{\max}\left(\massmatrix\right)  }\ge   \left\| \vec{\Sigma}^{(h;\nsamples)}_{\vec{\lnrfrv}^{(h)}}-\vec{\Sigma}_{\vec{\lnrfrv}^{(h)}}   \right\|_{2 \to 2}$.
We want to show that this inequality is only violated with small probability.
To do this, we derive by 
repeating the arguments of \cite[Lemmata 2 \& 3]{cai2010} and \cite[Remark 1]{cai2010} the bound
$
\prob\left\{ \left\|\vec{\Sigma}^{(h;\nsamples)}_{\vec{\lnrfrv}^{(h)}}-\expect{\vec{\Sigma}^{(h;\nsamples)}_{\vec{\lnrfrv}^{(h)}}}\right\|_{2\to 2}>t \right\} \le 2 \dimVh 5^{\taperingint}\exp\left(-\frac{1}{9}\nsamples t^2 \rho_1 \right)
$ 
for all 
$0< t<\rho_1$,
where $\rho_1$ is a constant, see \cite[page 2142]{cai2010}, and $\tau = \nsamples^{\frac{1}{2\alpha+1}}$.
So with \eqref{taperingerrorbias} we get  the triangle inequality, and by setting $\tau=\nsamples^{\frac{1}{2\alpha+1}}$ we get the bound
\begin{equation}\label{covboundinprobab}
\prob\left\{ \left\|\vec{\Sigma}^{(h;\nsamples)}_{\vec{\lnrfrv}^{(h)}}-\vec{\Sigma}_{\vec{\lnrfrv}^{(h)}} \right\|_{2\to 2} >t \right\}\nonumber 
\le \prob\left\{
\left\|\vec{\Sigma}^{(h;\nsamples)}_{\vec{\lnrfrv}^{(h)}}-\expect{\vec{\Sigma}^{(h;\nsamples)}_{\vec{\lnrfrv}^{(h)}}}\right\|_{2\to 2}+C \nsamples^{-\frac{2\alpha}{2\alpha+1}} >t \right\}. 
\end{equation}
Now let $\bar{t} \in (0,\rho_1) $ be fixed and let $\nsamples$ be large enough so that $C \nsamples^{-\frac{2\alpha}{2\alpha+1}} \le \frac{\bar{t}}{2}$. Then it holds that
\begin{align*}
\prob\left\{ \left\|\vec{\Sigma}^{(h;\nsamples)}_{\vec{\lnrfrv}^{(h)}}-\vec{\Sigma}_{\vec{\lnrfrv}^{(h)}} \right\|_{2\to 2} >\bar{t} \right\}&\le 
\prob\left\{
\left\|\vec{\Sigma}^{(h;\nsamples)}_{\vec{\lnrfrv}^{(h)}}-\expect{\vec{\Sigma}^{(h;\nsamples)}_{\vec{\lnrfrv}^{(h)}}}\right\|_{2\to 2} > \frac{\bar{t}}{2} \right\}\\& \le 2 \dimVh 5^{\taperingint}\exp\left(-\frac{1}{36}\nsamples \bar{t}^2 \rho_1 \right). 
\end{align*}
Now we set $\bar{t}:=\frac{1}{8} \min_{1\le \kllevelrun \le \kllevel }\delta_{\kllevelrun}$  and notice that the second inequality in the condition \eqref{condspectralgapfinal1} ensures that 
$C \nsamples^{-\frac{2\alpha}{2\alpha+1}} \le\frac{1}{2}\bar{t} =\frac{1}{16~ \lambda_{\max}\left(\massmatrix\right)  }\min_{1\le  \kllevelrun \le \kllevel }\delta_{\kllevelrun}$. 
This gives us 
\begin{eqnarray}\label{covboundinprobab3}
\prob\left\{ \left\|\vec{\Sigma}^{(h;\nsamples)}_{\vec{\lnrfrv}^{(h)}}-\vec{\Sigma}_{\vec{\lnrfrv}^{(h)}} \right\|_{2\to 2} > \frac{\min_{1\le \kllevelrun \le \kllevel }\delta_{\kllevelrun}}{8 }\right\}&\le& 
\prob\left\{
\left\|\vec{\Sigma}^{(h;\nsamples)}_{\vec{\lnrfrv}^{(h)}}-\expect{\vec{\Sigma}^{(h;\nsamples)}_{\vec{\lnrfrv}^{(h)}}}\right\|_{2\to 2} > \frac{\bar{t}}{2} \right\} \nonumber \\ &
\le& 1-p_0 
\end{eqnarray}
using $36*8^2=9*2^8$ and thus $\sqrt{36*8^2}=3*2^4=48$.
This proves the statement.
\end{proof}

This establishes another key ingredient for our later analysis, namely the optimal bound on the sampling error.

\section{Error analysis for covariance operator reconstruction}\label{sec:covoprec}
We are now in a position to derive an overall  error bound for the reconstruction of the continuous covariance operator from finite samples. To do so, recall the 
covariance kernel  $R$ in \eqref{carlemannkern}.
Furthermore, let $\phi_{\kllevelrun}^{(h;\nsamples)}$ be given as in \eqref{mercerh3} and let
$\kllevel\in\mathbb{N}$ be a truncation parameter with $1\le \kllevel \le \dimVh$. 
We are now interested in the approximation error $R-R^{(\kllevel;h;\nsamples)}$, where $R^{(\kllevel;h;\nsamples)}$ was defined in \eqref{mercerh3}.
To this end, we define 
\begin{align}\label{deltagl}
G^2(\kllevel):=\sum_{\kllevelrun=1}^{\kllevel}\left( \delta^{-1}_{\kllevelrun}\lambda_{\kllevelrun}\right)^{2} ,
\end{align}
which is a function of the truncation parameter $\kllevel$ that is determined only by the spectral properties of the unknown covariance kernel  $R$. \\
In special cases, it is possible to compute $G(\kllevel)$ explicitly.
Consider Brownian motion as an example. In the even univariate case, we have the kernel
\begin{align}\label{univariateBrownianMotion}
	R^{(1)}_{B}:(0,1)^2 \to \R, \quad R^{(1)}_{B}(x,x^{\prime}):=\min\{x,x^{\prime}\}.
\end{align}
Then it is known that 
	$\lambda_{R^{(1)}_{B}}(\kllevelrun)= \pi^{-2}\left(\kllevelrun -\frac{1}{2} \right)^{-2}= 
	\left(\frac{2}{\pi}\right)^2 \left(2\ell-1 \right)^{-2} \in \left(\frac{2}{\pi}\right)^2 (2\N-1)^{-2}$.
So we have
$
\delta_{R^{(1)}_{B}}(\kllevelrun):= \frac{2\kllevelrun}{\pi^2(\kllevelrun^2-\frac{1}{4})^2}=\frac{1}{2}\left(\frac{2}{\pi}\right)^2 \frac{\kllevelrun}{\left(\kllevelrun^2 -\frac{1}{4}\right)^{2}}
$
and consequently 
we finally get
\begin{align*}
	G^2_{R^{(1)}_{B}}(\kllevel)=\sum_{\kllevelrun=1}^{\kllevel}\left(\frac{ \lambda_{R^{(1)}_{B}}(\kllevelrun)} {\delta_{R^{(1)}_{B}}(\kllevelrun)} \right)^{2} = \frac{1}{64}\sum_{\kllevelrun=1}^{\kllevel} \frac{(2\kllevelrun+1)^4}{\kllevelrun^2}\approx \kllevel^{3} \quad \text{for } \kllevel \text{ large.}
\end{align*}
Furthermore, for a two-dimensional Brownian motion, we have the eigenvalues
\begin{align*}
\lambda_{R^{(2)}_{B}}(\ell_1,\ell_2):= \lambda_{R^{(1)}_{B}}(\kllevelrun_{1}) \lambda_{R^{(1)}_{B}}(\kllevelrun_{2})=\left(\frac{2}{\pi} \right)^{4} (2\ell_1-1)^{-2}(2\ell_2-1)^{-2}, \quad \ell_1,\ell_2 \in \N.
\end{align*}
So we observe 
$\text{image}(\lambda_{R^{(2)}_{B}})=\lambda_{R^{(2)}_{B}}(\N \times \N) = \left(\frac{2}{\pi} \right)^{4} (2\N-1)^{-2}$,
i.e. the image of $\lambda_{R^{(2)}_{B}}$ up to the pre-factor $\left(\frac{2}{\pi} \right)^{4}$ again consists of the squared inverses of the odd numbers. 
Thus, we can sort the eigenvalues (ignoring multiplicities)  by the ordering in the odd natural numbers. Furthermore, it holds that
\begin{align}
\lambda_{R^{(2)}_{B}}(\ell_1,\ell_2)=\lambda_{R^{(1)}_{B}}(1) \lambda_{R^{(1)}_{B}}\left(k(\ell_1,\ell_2)\right), \quad \text{where } k(\ell_1,\ell_2):=\frac{(2\ell_1-1)(2\ell_2-1)+1}{2} .
\end{align}
In the general $d$-dimensional situation, we have 
\begin{align*}
	\lambda_{R^{(d)}_{B}}(\ell_1,\dots,\ell_d)=\prod_{j=1}^{d} \lambda_{R^{(d)}_{B}}(\ell_{j})=\left(\lambda_{R^{(1)}_{B}}(1) \right)^{d-1} \lambda_{R^{(1)}_{B}}(k(\ell_1,\dots,\ell_{d})),
\end{align*}
where 
$k(\ell_1,\dots,\ell_{d}):=\frac{1}{2} \left(\prod_{j=1}^{d} (2\ell_j-1) +1\right)$.
Furthermore, with $\ell:=k(\ell_1,\dots,\ell_{d})$ we have the bound
$\delta_{R^{(d)}_{B}}(\kllevelrun)=\left(\lambda_{R^{(1)}_{B}}(1) \right)^{d-1}\delta_{R^{(1)}_{B}}(\kllevelrun)$.
Thus, for the $d$-dimensional Brownian motion, we get for large $L$
\begin{align}\label{eq:gl:asymptotic}
G^2_{R^{(d)}_{B}}(\kllevel)=\sum_{\kllevelrun=1}^{\kllevel}\left(\frac{ \lambda_{R^{(d)}_{B}}(\kllevelrun)} {\delta_{R^{(d)}_{B}}(\kllevelrun)} \right)^{2} = \sum_{\kllevelrun=1}^{\kllevel} \left( \frac{\ \lambda_{R^{(1)}_{B}}(\kllevelrun)}{\delta_{R^{(1)}_{B}}(\kllevelrun)} \right)^{2}=\frac{1}{64}\sum_{\kllevelrun=1}^{\kllevel} \frac{(2\kllevelrun+1)^4}{\kllevelrun^2}\approx \kllevel^{3}.
\end{align}
Note that $G_{R^{d}_{B}}(\kllevel)$ is now completely independent of the dimension.
In the general situation we have the following result: 
\begin{theorem}\label{finalthm}
Let $\dimVh \ge \kllevel$.
Under the condition of \cref{prop:spectralgap}, i.e. \eqref{condspectralgapfinal1} and the assumption $C_2 h^{s}  \lambda_{\kllevel}^{-1} \le  1$,  it holds with probability $p_0$ given in \eqref{eq:ppp} that 
\begin{align}\label{eq:final-1}
\norm{{R}-R^{(\kllevel;h;\nsamples)}}_{L^{2}(\dom\times\dom)}&\lesssim  \kllevel^{-\frac{2s}{d}-\frac{1}{2}}  + \kllevel^{\frac{1}{2}} h^{s}  +    (\kllevel^{\frac{1}{2}} + G(\kllevel)) \mathcal{E}_{(h;M)}
\end{align}
with $G(\kllevel)$ from \eqref{deltagl}. If we use the assumption $C_2 h^{s} \lambda_{\kllevel}^{-1} \le 1$ again, we get the simplified bound  
\begin{align}\label{eq:final-2}
\norm{{R}-R^{(\kllevel;h;\nsamples)}}_{L^{2}(\dom\times\dom)}&\lesssim \kllevel^{-\frac{2s}{d}-\frac{1}{2}}+  (\kllevel^{\frac{1}{2}}  +G(\kllevel))\mathcal{E}_{(h;M)} .
\end{align}
\end{theorem}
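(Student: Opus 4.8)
The plan is to telescope through the two intermediate kernels $R^{(\kllevel)}:=\sum_{\kllevelrun=1}^{\kllevel}\lambda_{\kllevelrun}\phi_{\kllevelrun}(\vec{x})\phi_{\kllevelrun}(\vec{x}^{\prime})$ from \eqref{carlemannkern} and $R^{(\kllevel;h)}$ from \eqref{mercerh}, and to estimate the three resulting differences separately via the triangle inequality,
\begin{align*}
\norm{R-R^{(\kllevel;h;\nsamples)}}_{L^2(\dom\times\dom)}
&\le \norm{R-R^{(\kllevel)}}_{L^2(\dom\times\dom)}
+\norm{R^{(\kllevel)}-R^{(\kllevel;h)}}_{L^2(\dom\times\dom)}\\
&\quad+\norm{R^{(\kllevel;h)}-R^{(\kllevel;h;\nsamples)}}_{L^2(\dom\times\dom)}.
\end{align*}
The recurring tool is the elementary identity that for any kernel $\sum_{\kllevelrun}a_{\kllevelrun}u_{\kllevelrun}(\vec{x})v_{\kllevelrun}(\vec{x}^{\prime})$ in which \emph{one} of the two families $\{u_{\kllevelrun}\}$ or $\{v_{\kllevelrun}\}$ is $L^2(\dom)$-orthonormal, the squared $L^2(\dom\times\dom)$-norm collapses to $\sum_{\kllevelrun}a_{\kllevelrun}^2$ times the squared norm of the remaining factor, all cross terms vanishing. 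The systems $\{\phi_{\kllevelrun}\}$, $\{\phi^{(h)}_{\kllevelrun}\}$ and $\{\phi^{(h;\nsamples)}_{\kllevelrun}\}$ are each $L^2(\dom)$-orthonormal (the latter two by the construction underlying \eqref{philhmortho}), and this is exactly what produces the square-root factors $\kllevel^{1/2}$ rather than $\kllevel$. Throughout, $u\otimes v$ denotes the kernel $u(\vec{x})v(\vec{x}^{\prime})$.

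For the truncation term I would use orthonormality of $\{\phi_{\kllevelrun}\otimes\phi_{\kllevelrun}\}_{\kllevelrun>\kllevel}$ to get $\norm{R-R^{(\kllevel)}}^2_{L^2(\dom\times\dom)}=\sum_{\kllevelrun>\kllevel}\lambda_{\kllevelrun}^2$, insert the sharp decay $\lambda_{\kllevelrun}\le C_{\ref{thm:truncationError}}\kllevelrun^{-2s/d-1}$ of \cref{thm:truncationError}, and sum the tail to obtain the leading $\kllevel^{-2s/d-1/2}$ term. For the spatial term I would split each summand as $(\lambda_{\kllevelrun}-\lambda^{(h)}_{\kllevelrun})\phi_{\kllevelrun}\otimes\phi_{\kllevelrun}+\lambda^{(h)}_{\kllevelrun}\big(\phi_{\kllevelrun}\otimes\phi_{\kllevelrun}-\phi^{(h)}_{\kllevelrun}\otimes\phi^{(h)}_{\kllevelrun}\big)$, further decomposing the rank-one difference as $e_{\kllevelrun}\otimes\phi_{\kllevelrun}+\phi^{(h)}_{\kllevelrun}\otimes e_{\kllevelrun}$ with $e_{\kllevelrun}:=\phi_{\kllevelrun}-\phi^{(h)}_{\kllevelrun}$. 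The eigenvalue part is handled by the orthonormal identity and $|\lambda_{\kllevelrun}-\lambda^{(h)}_{\kllevelrun}|\le C_1\lambda_{\kllevelrun}^{-1}h^{2s}$ from \cref{prop:babuska}; each of the two eigenfunction pieces has one orthonormal factor ($\{\phi_{\kllevelrun}\}$, resp. $\{\phi^{(h)}_{\kllevelrun}\}$), so its squared norm equals $\sum_{\kllevelrun}(\lambda^{(h)}_{\kllevelrun})^2\norm{e_{\kllevelrun}}^2_{L^2(\dom)}$, into which I feed $\norm{e_{\kllevelrun}}_{L^2(\dom)}\le C_2\lambda_{\kllevelrun}^{-1}h^{s}$. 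The hypothesis $C_2 h^s\lambda_{\kllevel}^{-1}\le1$ together with $\lambda_{\kllevelrun}\ge\lambda_{\kllevel}$ for $\kllevelrun\le\kllevel$ makes both $\lambda_{\kllevelrun}^{-1}h^{s}$ and $\lambda^{(h)}_{\kllevelrun}/\lambda_{\kllevelrun}$ uniformly $\mathcal{O}(1)$, so each of the $\kllevel$ terms contributes $\mathcal{O}(h^{2s})$ to the relevant sum of squares, giving $\norm{R^{(\kllevel)}-R^{(\kllevel;h)}}_{L^2(\dom\times\dom)}\lesssim\kllevel^{1/2}h^{s}$.

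The sampling term is treated identically, now comparing the $(h)$- with the $(h;\nsamples)$-quantities. The eigenvalue part gives $\sum_{\kllevelrun}|\lambda^{(h)}_{\kllevelrun}-\lambda^{(h;\nsamples)}_{\kllevelrun}|^2\le\kllevel\,\mathcal{E}_{(h;M)}^2$ by Weyl's inequality \eqref{weyleigenvalue}, hence the $\kllevel^{1/2}\mathcal{E}_{(h;M)}$ contribution; the eigenfunction part, after the same rank-one splitting and use of one orthonormal factor, produces $\sum_{\kllevelrun}(\lambda^{(h;\nsamples)}_{\kllevelrun})^2\norm{\phi^{(h)}_{\kllevelrun}-\phi^{(h;\nsamples)}_{\kllevelrun}}^2_{L^2(\dom)}$, into which I insert the Davis--Kahan bound $\norm{\phi^{(h)}_{\kllevelrun}-\phi^{(h;\nsamples)}_{\kllevelrun}}_{L^2(\dom)}\le 4C\delta_{\kllevelrun}^{-1}\mathcal{E}_{(h;M)}$ from \cref{theoremeigenfucntions}, valid on the event of probability $p_0$ on which the spectral-gap \cref{ass:spectralgap} holds by \cref{prop:spectralgap}. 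This leaves $\mathcal{E}_{(h;M)}^2\sum_{\kllevelrun}(\lambda^{(h;\nsamples)}_{\kllevelrun}\delta_{\kllevelrun}^{-1})^2$, which I aim to convert into $G^2(\kllevel)\mathcal{E}_{(h;M)}^2$ with $G(\kllevel)$ from \eqref{deltagl}.

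The step I expect to be the main obstacle is precisely this last conversion, since $G(\kllevel)$ is defined purely through the unknown continuous eigenvalues $\lambda_{\kllevelrun}$, whereas the natural estimate carries the data-dependent $\lambda^{(h;\nsamples)}_{\kllevelrun}$. To close the gap I would show $\lambda^{(h;\nsamples)}_{\kllevelrun}\lesssim\lambda_{\kllevelrun}$ uniformly in $1\le\kllevelrun\le\kllevel$: combining \eqref{weyleigenvalue2}, i.e. $|\lambda_{\kllevelrun}-\lambda^{(h;\nsamples)}_{\kllevelrun}|\le C_1\lambda_{\kllevelrun}^{-1}h^{2s}+\mathcal{E}_{(h;M)}$, with the two inequalities in \eqref{condspectralgapfinal1} and the bound $\mathcal{E}_{(h;M)}\le\tfrac14\delta_{\kllevelrun}\le\tfrac14\lambda_{\kllevelrun}$ (valid on the good event via \eqref{condspectralgap1}, using $\delta_{\kllevelrun}\le\lambda_{\kllevelrun}-\lambda_{\kllevelrun+1}\le\lambda_{\kllevelrun}$) shows that both perturbations are bounded by a fixed fraction of $\delta_{\kllevelrun}\le\lambda_{\kllevelrun}$, keeping the ratio $\mathcal{O}(1)$; hence $\sum_{\kllevelrun}(\lambda^{(h;\nsamples)}_{\kllevelrun}\delta_{\kllevelrun}^{-1})^2\lesssim G^2(\kllevel)$. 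Assembling the three contributions yields \eqref{eq:final-1}. Finally, the hypothesis $C_2 h^s\lambda_{\kllevel}^{-1}\le1$ with $\lambda_{\kllevel}\lesssim\kllevel^{-2s/d-1}$ gives $\kllevel^{1/2}h^{s}\lesssim\kllevel^{1/2}\lambda_{\kllevel}\lesssim\kllevel^{-2s/d-1/2}$, which is absorbed into the truncation term and produces the simplified bound \eqref{eq:final-2}.
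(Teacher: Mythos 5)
Your proposal is correct and takes essentially the same route as the paper's proof: the identical telescoping into truncation, spatial, and sampling errors, the same rank-one splittings exploiting one orthonormal family per term, and the same key tools (the eigenvalue decay of \cref{thm:truncationError}, the Babu\v{s}ka--Osborn bounds of \cref{prop:babuska}, Weyl's inequality \eqref{weyleigenvalue}, and the Davis--Kahan bound of \cref{theoremeigenfucntions} on the probability-$p_0$ event). The only cosmetic deviation is the final conversion to $G(\kllevel)$, where you prove $\lambda^{(h;\nsamples)}_{\kllevelrun}\lesssim\lambda_{\kllevelrun}$ directly from the spectral-gap condition, whereas the paper writes $\lambda^{(h;\nsamples)}_{\kllevelrun}\le\mathcal{E}_{(h;M)}+\lambda_{\kllevelrun}$ and absorbs the extra term via $\delta^{-1}_{\kllevelrun}\mathcal{E}_{(h;M)}\le\tfrac14$ into the $\kllevel^{\frac{1}{2}}$ contribution --- the same ingredients arranged slightly differently.
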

\begin{proof}
Since \eqref{condspectralgapfinal1} holds, it is ensured that the condition \eqref{condspectralgap1} in our spectral gap \cref{ass:spectralgap} is satisfied with the probability $p_0$ from \eqref{eq:ppp}.
Now recall the definition of $R$ in \eqref{carlemannkern}, of $R^{(\kllevel)}$ in \eqref{carlemannkern}, of $R^{(\kllevel;h)}$ in \eqref{mercerh} and of $R^{(\kllevel;h;\nsamples)}$ in \eqref{mercerh3}.
We will break the proof into three parts. Note that we can decompose the approximation error as
\begin{align}\label{errordecomposition}
&R(\vec{x},\vec{x}^{\prime})-R^{(\kllevel;h;\nsamples)}(\vec{x},\vec{x}^{\prime}) := \underbrace{R(\vec{x},\vec{x}^{\prime})- R^{(\kllevel)}(\vec{x},\vec{x}^{\prime})}_{E_1}\\&
+\underbrace{R^{(\kllevel)}(\vec{x},\vec{x}^{\prime})-R^{(\kllevel;h)}(\vec{x},\vec{x}^{\prime})}_{E_2}
+\underbrace{R^{(\kllevel;h)}(\vec{x},\vec{x}^{\prime})
-R^{(\kllevel;h;\nsamples)}(\vec{x},\vec{x}^{\prime})}_{E_3} \nonumber.
\end{align}
The first error term
\begin{align}\label{error1}
E_1(\vec{x},\vec{x}^{\prime}):=R(\vec{x},\vec{x}^{\prime})- R^{(\kllevel)}(\vec{x},\vec{x}^{\prime})=\sum_{\kllevelrun=\kllevel+1}^{\infty}\lambda_{\kllevelrun}\phi_{\kllevelrun}(\vec{x})\phi_{\kllevelrun}(\vec{x}^{\prime})
\end{align}
depends only on the truncation parameter $\kllevel$.
The second error term
\begin{align}\label{error2}
E_2(\vec{x},\vec{x}^{\prime}):=R^{(\kllevel)}(\vec{x},\vec{x}^{\prime})-R^{(\kllevel;h)}(\vec{x},\vec{x}^{\prime})
\end{align}
depends on the spatial discretization $h$ and the truncation parameter $L$.
The third error term
\begin{align}\label{error3}
E_3(\vec{x},\vec{x}^{\prime})&:=R^{(\kllevel;h)}(\vec{x},\vec{x}^{\prime})-R^{(\kllevel;h;\nsamples)}(\vec{x},\vec{x}^{\prime})\nonumber \\&=\sum_{\kllevelrun=1}^{\kllevel} \lambda^{(h)}_{\kllevelrun} \phi^{(h)}_{\kllevelrun}(\vec{x}) \phi^{(h)}_{\kllevelrun}(\vec{x}^{\prime})-\sum_{\kllevelrun=1}^{\kllevel} \lambda^{(h;\nsamples)}_{\kllevelrun} \phi^{(h;\nsamples)}_{\kllevelrun}(\vec{x})\phi^{(h;\nsamples)}_{\kllevelrun}(\vec{x}^{\prime})
\end{align}
depends on the sample size $\nsamples$, the spatial discretization $h$, and the truncation parameter $L$.

To derive a bound for \eqref{error1} we use the orthonormality of the eigenfunctions $\{\phi_{\ell}\}_{\ell=1}^{\infty}$ and obtain directly for the integral operator
$
\mathcal{E}_{1}:L^{2}(\dom) \to L^{2}(\dom), \quad v \mapsto \int_{\dom}E_{1}(\vec{x},\cdot) v(\vec{x}) \, \mathrm{d}\lebesgue(\vec{x}) 
$
the identity
\begin{align}\label{E1error}
\norm{\mathcal{E}_{1}}_{L^{2}(\dom)\to L^{2}(\dom) } = \norm{E_1}_{L^{2}(\dom \times \dom)}  = \left(\sum_{\kllevelrun=\kllevel+1}^{\infty}  \lambda_{\kllevelrun}^2 \right)^{\frac{1}{2}}.
\end{align}
Now we can use the inequality \eqref{thm:truncationErrorLambda} of 
\cref{thm:truncationError} and derive the bound
\begin{align*}
\left(\sum_{\kllevelrun=\kllevel+1}^{\infty}  \lambda_{\kllevelrun}^2 \right)^{\frac{1}{2}} &\le C_{\ref{thm:truncationError}} 
\left(\sum_{\kllevelrun=\kllevel+1}^{\infty}  \kllevelrun^{-\frac{4s}{d}-2} \right)^{\frac{1}{2}} 
\le C_{\ref{thm:truncationError}} \left(\frac{4s}{d}+1 \right)^{-\frac{1}{2}} \kllevel^{-\frac{2s}{d}-\frac{1}{2}} .
\end{align*}
For the second term \eqref{error2} we have
\begin{align*}
&E_2(\vec{x},\vec{x}^{\prime}):=\sum_{\kllevelrun=1}^{\kllevel} \lambda_{\kllevelrun} \phi_{\kllevelrun}(\vec{x})\phi_{\kllevelrun}(\vec{x}^{\prime})-
\sum_{\kllevelrun=1}^{\kllevel} \lambda^{(h)}_{\kllevelrun} \phi^{(h)}_{\kllevelrun}(\vec{x})\phi^{(h)}_{\kllevelrun}(\vec{x}^{\prime})\\
&= \sum_{\kllevelrun=1}^{\kllevel} \left( \lambda_{\kllevelrun} -\lambda^{(h)}_{\kllevelrun} \right)\phi_{\kllevelrun}(\vec{x})\phi_{\kllevelrun}(\vec{x}^{\prime})
+\sum_{\kllevelrun=1}^{\kllevel} \lambda^{(h)}_{\kllevelrun} \left(\phi_{\kllevelrun}(\vec{x})-\phi^{(h)}_{\kllevelrun}(\vec{x})\right)
\phi_{\kllevelrun}(\vec{x}^{\prime})\\
&+\sum_{\kllevelrun=1}^{\kllevel} \lambda^{(h)}_{\kllevelrun} \phi^{(h)}_{\kllevelrun}(\vec{x}) \left(\phi_{\kllevelrun}(\vec{x}^{\prime})-\phi^{(h)}_{\kllevelrun}(\vec{x}^{\prime})\right)
=:E_{2;1}(\vec{x},\vec{x}^{\prime})+E_{2;2}(\vec{x},\vec{x}^{\prime})+E_{2;3}(\vec{x},\vec{x}^{\prime}).
\end{align*}
The orthonormality of $\{\phi_{\ell}\}_{\ell=1}^{\infty}$ and an application of 
\cref{prop:babuska} leads to  
\begin{align}\label{eq:yyy1}
\normL{{E}_{2;1}}{\dom\times\dom}&\le \left(\sum_{\kllevelrun=1}^{\kllevel} |\lambda_{\kllevelrun} -\lambda^{(h)}_{\kllevelrun}|^2\right)^{1/2}
\le C_1 h^{s}  \left(\sum_{\kllevelrun=1}^{\kllevel} h^{2s}\lambda_{\kllevelrun}^{-2}\right)^{1/2} \le \frac{C_1}{C_2} \kllevel^{\frac{1}{2}}h^s,
\end{align}
where the last inequality holds because $C_2 h^{s}  \lambda_{\kllevel}^{-1} \le 1$. 
This bound implies a condition on $h$, namely
\begin{align}\label{condhL}
C_2 h^{s} \le  \lambda_{\kllevel} \le \kllevel^{-\frac{2s}{d}-1},
\end{align}
where the first inequality is true in general, see \cref{lem:regeigenfunctions} and the upper bound follows only under additional assumptions.
Now we turn to $E_{2;2}$. The orthonormality of $\{\phi_{\ell}\}_{\ell=1}^{\infty}$ and the bound $\lambda^{(h)}_{\kllevelrun} \le \lambda_{\kllevelrun}$ for $1\le \kllevelrun \le \kllevel$  implies,
due to Courant's min-max principle for decreasing eigenvalues, the bound
\begin{align*}
\normL{{E}_{2;2}}{\dom\times\dom}^2
\le\sum_{\kllevelrun=1}^{\kllevel}\left(\lambda^{(h)}_{\kllevelrun}\right)^{2}
\normL{\phi_{\kllevelrun}-\phi_{\kllevelrun}^{(h)}}{\dom}^2 \le \sum_{\kllevelrun=1}^{\kllevel}\lambda^{2}_{\kllevelrun}
\normL{\phi_{\kllevelrun}-\phi_{\kllevelrun}^{(h)}}{\dom}^2 \le C_2^{2} h^{2s} \kllevel,
\end{align*}
where we used  $\normL{\phi_{\kllevelrun}^{(h)}-\phi_{\kllevelrun}}{\dom}\leq C_{2}\lambda_{\kllevelrun}^{-1} h^{s}$  in the last step, see \cref{prop:babuska}.
So we get
\begin{align}\label{eq:yyy2}
\normL{{E}_{2;2}}{\dom\times\dom} &\le C_{2} h^{s} \kllevel^{\frac{1}{2}}.
\end{align}
For the term $E_{2,3}$, the orthonormality of the series $\{\phi_{\kllevelrun}^{(h)}\}_{\kllevelrun=1}^{\dimVh}$, which follows with the eigenvalue problem 
\eqref{genalizedeigenvalue}, leads to
$\normL{{E}_{2;3}}{\dom\times\dom}^2
=\sum_{\kllevelrun=1}^{\kllevel}\left(\lambda^{(h)}_{\kllevelrun}\right)^2 \normL{\phi_{\kllevelrun}-\phi_{\kllevelrun}^{(h)}}{\dom}^2$.
So with literally the same computations as for \eqref{eq:yyy2}, we get the bound
\begin{align}\label{eq3}
\normL{{E}_{2;3}}{\dom\times\dom} &\le C_{2} h^{s} \kllevel^{\frac{1}{2}}.
\end{align}
Together with the estimates \eqref{eq:yyy1}, \eqref{eq:yyy2}, and \eqref{eq3}, we get
\begin{align}\label{E2error}
\normL{{E}_{2}}{\dom\times\dom}=\normL{{E}_{2;1}+{E}_{2;2}+{E}_{2;3}}{\dom\times\dom}
\le \left( \frac{C_1}{C_2} + 2 C_2 \right) C^{-1}_2\kllevel^{-\frac{2s}{d}-\frac{1}{2}}.
\end{align}
For the third term $E_3$ in \eqref{error3}, we proceed analogously to the splitting of \eqref{error2}. Then we use the decomposition
\begin{align*}
&E_3(\vec{x},\vec{x}^{\prime})=\sum_{\kllevelrun=1}^{\kllevel} \lambda^{(h)}_{\kllevelrun} \phi_{\kllevelrun}^{(h)}(\vec{x})\phi_{\kllevelrun}^{(h)}(\vec{x}^{\prime})
-\sum_{\kllevelrun=1}^{\kllevel} \lambda^{(h;\nsamples)}_{\kllevelrun} \phi^{(h;\nsamples)}_{\kllevelrun}(\vec{x})\phi^{(h;\nsamples)}_{\kllevelrun}(\vec{x}^{\prime})
\\&= \sum_{\kllevelrun=1}^{\kllevel} \left( \lambda^{(h)}_{\kllevelrun} -\lambda^{(h;\nsamples)}_{\kllevelrun} \right)
\phi_{\kllevelrun}^{(h)}(\vec{x})\phi_{\kllevelrun}^{(h)}(\vec{x}^{\prime})
+\sum_{\kllevelrun=1}^{\kllevel} \lambda^{(h;\nsamples)}_{\kllevelrun} \left(\phi_{\kllevelrun}^{(h)}(\vec{x})-\phi^{(h;\nsamples)}_{\kllevelrun}(\vec{x})\right)\phi^{(h)}_{\kllevelrun}(\vec{x}^{\prime}) \\&+\sum_{\kllevelrun=1}^{\kllevel} \lambda^{(h;\nsamples)}_{\kllevelrun} \left(\phi_{\kllevelrun}^{(h)}(\vec{x}^{\prime}) -\phi_{\kllevelrun}^{(h;\nsamples)}(\vec{x}^{\prime})\right)
\phi^{(h;\nsamples)}_{\kllevelrun}(\vec{x}) =:E_{3;1}+E_{3;2}+E_{3;3}.
\end{align*}
Now for the term $E_{3,1}$, the orthonormality of the basis $\{\phi_{\kllevelrun}^{(h)}\}_{\kllevelrun=1}^{\dimVh}$ and an application of \eqref{weyleigenvalue} implies
\begin{align}\label{eq:zzz1a}
\normL{{E}_{3;1}}{\dom}
&=\left( \sum_{\kllevelrun=1}^{\kllevel} \left( \lambda^{(h)}_{\kllevelrun} -\lambda^{(h;\nsamples)}_{\kllevelrun} \right)^2\right)^{\frac{1}{2}} \le \kllevel^{1/2} \mathcal{E}_{(h;M)}.
\end{align}
\noindent
To derive an upper estimate for the term $E_{3,2}$, we will use our bound from 
\cref{theoremeigenfucntions} for the eigenfunction approximation in \eqref{phivecerror}. 
So we have with probability $p_0$ that
\begin{align}\label{phierror}
\normL{\phi^{(h)}_{\kllevelrun}-\phi^{(h;\nsamples)}_{\kllevelrun}}{\dom}^2\le 4C^2  \delta^{-2}_{\kllevelrun}
\mathcal{E}_{(h;M)},
\end{align}
where we note at this point that this bound uses \eqref{condspectralgap1} from the spectral gap 
\cref{ass:spectralgap}.
 Due to the orthonormality of the basis $\{ \phi^{(h)}_{\kllevelrun}\}_{\kllevelrun=1}^{\dimVh}$, this results in 
\begin{align*}
\normL{{E}_{3;2}}{\dom\times\dom}
\le 2C\left\|\tildstiffnessmatrix-\tildestiffnessmatrixM \right\|_{2\to 2} \left(\sum_{\kllevelrun=1}^{\kllevel}\left(  \delta^{-1}_{\kllevelrun}\lambda^{(h;\nsamples)}_{\kllevelrun}
 \right)^{2}\right)^{\frac{1}{2}}.
\end{align*}
We can also use \eqref{weyleigenvalue}, i.e.
$
\left| \lambda^{(h)}_{\kllevelrun} -\lambda^{(h;\nsamples)}_{\kllevelrun} \right| \le 
\mathcal{E}_{(h;M)}
$, 
for $1\le  \kllevelrun \le \min\{\dimVh,\kllevel\}$
to get
$
\left|\lambda^{(h;\nsamples)}_{\kllevelrun} \right|\le \left| \lambda^{(h)}_{\kllevelrun} -\lambda^{(h;\nsamples)}_{\kllevelrun} \right| +\left| \lambda^{(h)}_{\kllevelrun} \right| \le 
 \mathcal{E}_{(h;M)}+\left| \lambda_{\kllevelrun} \right|,
$
where we again used $\lambda^{(h)}_{\kllevelrun} \le \lambda_{\kllevelrun}$ for $1\le \kllevelrun \le \kllevel$ in the last step.
This gives with probability $p_0$
\begin{align}\label{eq:zzz2}
\normL{{E}_{3;2}}{\dom\times\dom}
\le 2C  \mathcal{E}_{(h;M)} \left(\sum_{\kllevelrun=1}^{\kllevel}\left(  \delta^{-1}_{\kllevelrun} 
\mathcal{E}_{(h;M)} +  \delta^{-1}_{\kllevelrun} \left| \lambda_{\kllevelrun} \right| \right)^{2}\right)^{\frac{1}{2}}.
\end{align}
For the term $E_{3,3}$ we use the orthonormality of the $\{\phi_{\kllevelrun}^{(h;\nsamples)}\}$ to get
\begin{align}\label{e32e33}
\normL{E_{3;3}}{\dom\times \dom}^2&=\sum_{\kllevelrun=1}^{\kllevel} (\lambda^{(h;\nsamples)}_{\kllevelrun})^2 \normL{\phi^{(h)}_{\kllevelrun}-\phi^{(h;\nsamples)}_{\kllevelrun}}{\dom}^2 =\normL{E_{3;2}}{\dom\times \dom}^2.
\end{align}
So again we get with probability $p_0$
\begin{align}\label{E33error}
\normL{{E}_{3;3}}{\dom\times\dom}
\le 2C  \mathcal{E}_{(h;M)} \left(\sum_{\kllevelrun=1}^{\kllevel}\left(  \delta^{-1}_{\kllevelrun}\mathcal{E}_{(h;M)}  +  \delta^{-1}_{\kllevelrun} \left| \lambda_{\kllevelrun} \right| \right)^{2}\right)^{\frac{1}{2}}.
\end{align}

Combining the inequalities \eqref{eq:zzz1a}, \eqref{eq:zzz2}, \eqref{E33error} and using the triangle inequality $$\normL{{E}_{3}}{\dom\times\dom} \le \normL{{E}_{3;1}}{\dom\times\dom}+\normL{{E}_{3;2}}{\dom\times\dom}+\normL{{E}_{3;3}}{\dom\times\dom}$$  
we get again with probability $p_0$
\begin{align}\label{E3error1}
&\normL{{E}_{3}}{\dom\times\dom}
\le
 \mathcal{E}_{(h;M)} \left( \kllevel^{\frac{1}{2}}+4C \left(\sum_{\kllevelrun=1}^{\kllevel}\left( \delta^{-1}_{\kllevelrun}
 \mathcal{E}_{(h;M)}  +  \delta^{-1}_{\kllevelrun}\left| \lambda_{\kllevelrun} \right| \right)^{2}\right)^{\frac{1}{2}} \right).
\end{align}
So we get with  \eqref{condspectralgap1} from the spectral gap 
\cref{ass:spectralgap}
$\delta_{\kllevelrun}
\ge 4\mathcal{E}_{(h;M)},
$
which is satisfied  with probability $p_0$, that 
\begin{align*}
&\sum_{\kllevelrun=1}^{\kllevel}\left(  \delta^{-1}_{\kllevelrun}\mathcal{E}_{(h;M)}  +  
 \delta^{-1}_{\kllevelrun}\left| \lambda_{\kllevelrun} \right| \right)^{2} \le 
2 \sum_{\kllevelrun=1}^{\kllevel}\left(  \delta^{-1}_{\kllevelrun} \mathcal{E}_{(h;M)}\right)^{2} +2
 \sum_{\kllevelrun=1}^{\kllevel} \left( \delta^{-1}_{\kllevelrun} \left| \lambda_{\kllevelrun} \right| \right)^{2}
\nonumber \\&\le\frac{1}{8}  \kllevel +  2 \sum_{\kllevelrun=1}^{\kllevel} \left( \delta^{-1}_{\kllevelrun}\lambda_{\kllevelrun}  \right)^{2}
= \frac{1}{8}  \kllevel + 2 G^2(\kllevel).
\end{align*}
Thus,  using $\sqrt{a+b} \le \sqrt{a}+\sqrt{b}$ for $a,b\ge 0$, the bound \eqref{E3error1} reduces to
\begin{align}\label{E3error}
\normL{{E}_{3}}{\dom\times\dom} \le
 \mathcal{E}_{(h;M)} \left( \kllevel^{\frac{1}{2}}+\sqrt{2}C \kllevel^{\frac{1}{2}} + 4\sqrt{2} C G(\kllevel)\right),
\end{align}
which holds with  probability $p_0$.
Finally, combining the inequalities \eqref{E1error}, \eqref{E2error} and \eqref{E3error} we get 
\begin{align*}
\norm{R-R^{(\kllevel;h;\nsamples)}}_{\dom \times \dom} \le \norm{E_1 + E_2 + E_3}_{\dom \times \dom} \le \norm{E_1 }_{\dom \times \dom} +\norm{E_2 }_{\dom \times \dom} +\norm{E_3}_{\dom \times \dom}, 
\end{align*}
which shows the assertion.
\end{proof}
The bound in \cref{finalthm} still depends on $\mathcal{E}_{(h;M)}$. 
However, this term includes the $\nsamples$-dependent sampling approximation \eqref{tildestiffnessample} and is thus a random variable. In the following, we are interested in the expected value of the error. 
Note that we cannot use \cref{corboundtildeS} directly, since we need the spectral gap \cref{ass:spectralgap} to be valid. But this is only 
the case with probability $p_0$, which needs to be taken care of properly.
To do this, we introduce the quantity
\begin{align}\label{HL}
H(\kllevel):= {\left(\frac{1}{48}\min_{1\le \kllevelrun \le \kllevel }\delta_{\kllevelrun}\right)^{2}},
\end{align}
which is a function of the truncation parameter $\kllevel$ that depends on the spectral properties as given in \eqref{defn:gap} of the true kernel $R$. For general operators, this quantity is not easy to obtain.

However, in the simple case of $d$-dimensional Brownian motion, we can obtain a precise value as
\begin{align}\label{eq:Hlasymptotic}
H_{R^{(d)}_{B}}(\kllevel)&:=\frac{1}{{48^{2}}}\min_{\kllevelrun=1}^{\kllevel} \delta^{2}_{R^{(d)}_{B}}(\kllevelrun)=\frac{1}{{2304}} \min_{\kllevelrun=1}^{\kllevel}\left(\lambda_{R^{(1)}_{B}}(1) \right)^{2(d-1)}\left(\delta_{R^{(1)}_{B}}(\kllevelrun)\right)^{2}\nonumber \\
&=\frac{1}{{2304}} \left(\lambda_{R^{(1)}_{B}}(1) \right)^{2(d-1)} \frac{1}{4}\left(\frac{2}{\pi}\right)^4\min_{\kllevelrun=1}^{\kllevel}\left(\frac{\kllevelrun}{\left(\kllevelrun^2 -\frac{1}{4}\right)^{2}}\right)^{2} 
\approx \kllevel^{-6} \quad \text{for } \kllevel \text{ large.}
\end{align}
Note that unlike the value $G_{R^{(d)}_{B}}(\kllevel)$ in \eqref{eq:gl:asymptotic}, the dimension now enters exponentially into the total value of $H_{R^{(d)}_{B}}(\kllevel)$ in \eqref{eq:Hlasymptotic} .

All in all, in the general situation we have the following result:
\begin{theorem}\label{thm:final2}
Under the conditions of \cref{prop:spectralgap}, i.e. \eqref{condspectralgapfinal1}, with the condition \eqref{condhL} and with the definition \eqref{tilderhoM}, we have
\begin{align}\label{eq:final-3}
\expect{\norm{{R}-R^{(\kllevel;h;\nsamples)}}_{L^{2}(\dom\times\dom)}} &\lesssim  \kllevel^{-\frac{2s}{d}-\frac{1}{2}} + \left( \kllevel^{\frac{1}{2}}+ G(\kllevel) \right) \tilde{\rho}^{\frac{1}{2}}_{h}(\nsamples) \lambda_{\max}\left(\massmatrix\right) \nonumber\\
	&+  L^{\frac{1}{2}} h^{-d} \exp\left(- \nsamples  \rho_1 H(\kllevel)\lambda_{\max}^{-2} \left(\massmatrix\right)  \right),
\end{align}
where $H(\kllevel)$ is defined in \eqref{HL} and $\rho_1$ is a constant given in the proof of 
 \cref{prop:spectralgap}.
\end{theorem}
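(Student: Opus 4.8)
The plan is to condition on the spectral-gap event and then combine the pathwise bound of \cref{finalthm} with the moment bounds of \cref{corboundtildeS}. Let $B$ denote the event on which the spectral gap condition \eqref{condspectralgap1} fails. By \cref{prop:spectralgap} and \eqref{eq:ppp}, $\prob(B)\le 1-p_0 = 2\dimVh 5^{\taperingint}\exp\!\left(-\nsamples\rho_1 H(\kllevel)\lambda_{\max}^{-2}(\massmatrix)\right)$, where I have rewritten the exponent using $H(\kllevel)$ from \eqref{HL} and $\taperingint=\nsamples^{1/(2\alpha+1)}$. Writing $\norm{\cdot}$ for $\norm{\cdot}_{L^{2}(\dom\times\dom)}$, I split
\[
\expect{\norm{R-R^{(\kllevel;h;\nsamples)}}} = \expect{\norm{R-R^{(\kllevel;h;\nsamples)}}\mathbf{1}_{B^c}} + \expect{\norm{R-R^{(\kllevel;h;\nsamples)}}\mathbf{1}_{B}}
\]
and estimate the two pieces separately.

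On $B^c$ the hypotheses of \cref{finalthm} are met, so the pathwise bound \eqref{eq:final-2} holds, i.e. $\norm{R-R^{(\kllevel;h;\nsamples)}} \lesssim \kllevel^{-\frac{2s}{d}-\frac12} + (\kllevel^{\frac12}+G(\kllevel))\mathcal{E}_{(h;M)}$. Since the right-hand side is nonnegative I may bound $\mathbf{1}_{B^c}\le 1$ and pass to the full expectation; then \cref{corboundtildeS} with $k=1$ gives $\expect{\mathcal{E}_{(h;M)}}\lesssim \rho^{\frac12}_{h}(\nsamples)\lambda_{\max}(\massmatrix)$, and the comparison $\rho_{h}(\nsamples)\lesssim\tilde{\rho}_{h}(\nsamples)$ from \eqref{tilderhoM} turns this into the first two terms of \eqref{eq:final-3}, namely $\kllevel^{-\frac{2s}{d}-\frac12}+(\kllevel^{\frac12}+G(\kllevel))\tilde{\rho}^{\frac12}_{h}(\nsamples)\lambda_{\max}(\massmatrix)$.

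The delicate piece is the contribution of the bad event $B$, and this is where I expect the main obstacle, since the estimate of \cref{finalthm} is unavailable there. I would use the crude, \emph{unconditional} bound $\norm{R-R^{(\kllevel;h;\nsamples)}}\le \norm{R}+\norm{R^{(\kllevel;h;\nsamples)}}$, and then exploit the $L^{2}$-orthonormality \eqref{philhmortho} of the $\phi_{\kllevelrun}^{(h;\nsamples)}$ to write $\norm{R^{(\kllevel;h;\nsamples)}}^2=\sum_{\kllevelrun=1}^{\kllevel}(\lambda_{\kllevelrun}^{(h;\nsamples)})^2$. The eigenvalues are controlled without reference to the gap assumption by Weyl's inequality \eqref{weyleigenvalue} together with $\lambda_{\kllevelrun}^{(h)}\le\lambda_{\kllevelrun}$ (Courant min--max), giving $\lambda_{\kllevelrun}^{(h;\nsamples)}\le\lambda_{\kllevelrun}+\mathcal{E}_{(h;M)}$ and hence $\norm{R^{(\kllevel;h;\nsamples)}}\lesssim \norm{R}+\kllevel^{\frac12}\mathcal{E}_{(h;M)}$. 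Substituting this, the $B$-contribution is $\lesssim \expect{(1+\kllevel^{\frac12}\mathcal{E}_{(h;M)})\mathbf{1}_B}$: the constant part is $\lesssim\prob(B)$, and the random part is handled by Cauchy--Schwarz together with the second-moment bound $\expect{\mathcal{E}_{(h;M)}^2}\lesssim\rho_{h}(\nsamples)\lambda_{\max}^2(\massmatrix)$ from \cref{corboundtildeS}. Inserting $\prob(B)\le 1-p_0$ and using $\dimVh\sim s^d h^{-d}$ to read off the prefactor $\dimVh$ as $h^{-d}$ and the surviving $\kllevel^{\frac12}$ as $L^{\frac12}$ then yields the last term $L^{\frac12} h^{-d}\exp\!\left(-\nsamples\rho_1 H(\kllevel)\lambda_{\max}^{-2}(\massmatrix)\right)$.

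The hard part is precisely this last matching. One must absorb the combinatorial net factor $5^{\taperingint}$ that appears in $1-p_0$ into the stated exponential rate, and reconcile the full exponent in \eqref{eq:final-3} with the square root of $\prob(B)$ produced by the Cauchy--Schwarz step on the random part. The cleanest route is to observe that, for $\nsamples$ large, the growth $\taperingint\ln 5=\nsamples^{1/(2\alpha+1)}\ln 5$ is dominated by the decay $\nsamples\rho_1 H(\kllevel)\lambda_{\max}^{-2}(\massmatrix)$, so that both the net factor and the halving of the exponent are swallowed by the exponential decay up to an adjustment of the constants hidden in $\lesssim$, leaving only the polynomial prefactor $L^{\frac12}h^{-d}$ in the final estimate.
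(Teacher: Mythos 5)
Your proposal follows essentially the same route as the paper's proof: both split over the event where the spectral-gap condition holds and its complement, apply the pathwise bound of \cref{finalthm} together with the moment bounds of \cref{corboundtildeS} on the good set, and on the bad set combine crude orthonormality/Weyl-type bounds on the $\lambda^{(h;\nsamples)}_{\kllevelrun}$ with the probability estimate of \cref{prop:spectralgap} (the paper sidesteps your Cauchy--Schwarz step by extending the integral of $\mathcal{E}_{(h;M)}$ over the bad set to all of $\Omega$ and using the first moment, and it keeps the deterministic pieces $E_1$, $E_2$ and the unconditional piece $E_{3;1}$ outside the good/bad splitting, but these are cosmetic differences). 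The obstacle you flag at the end --- the factor $5^{\taperingint}$ in $1-p_0$, which cannot be absorbed into the constant hidden in $\lesssim$ without weakening the exponent, since $\taperingint=\nsamples^{\frac{1}{2\alpha+1}}$ grows with $\nsamples$ --- is genuine, but the paper's own proof has exactly the same issue and silently drops this factor in the final step, so your treatment (which at least acknowledges it) is no less rigorous than the original; note also that the $\prob(B)^{\frac{1}{2}}$ from your Cauchy--Schwarz step needs no reconciliation with the exponential term at all, since that contribution is bounded by $\kllevel^{\frac{1}{2}}\tilde{\rho}^{\frac{1}{2}}_{h}(\nsamples)\lambda_{\max}\left(\massmatrix\right)$ and is absorbed into the second term of \eqref{eq:final-3}.
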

\begin{proof}
From 
\cref{finalthm}, we have with the condition \eqref{condhL} that
\begin{align*}
&\expect{\norm{{R}-R^{(\kllevel;h;\nsamples)}}_{L^{2}(\dom\times\dom)}} \lesssim  \kllevel^{-\frac{2s}{d}-\frac{1}{2}}   + h^{s} \kllevel^{\frac{1}{2}}+  \expect{ \normL{E_{3}}{\dom\times \dom}} \\
&\lesssim  \kllevel^{-\frac{2s}{d}-\frac{1}{2}}  +  \expect{ \normL{E_{3;1}}{\dom\times \dom}+ \normL{E_{3;2}}{\dom\times \dom}+ \normL{E_{3;3}}{\dom\times \dom}}. 
\end{align*}
The term $\normL{{E}_{3;1}}{\dom\times \dom}$ from \eqref{eq:zzz1a} can be handled directly and we get with \cref{corboundtildeS} the bound
\begin{align}\label{expectE31}
\expect{\normL{{E}_{3;1}}{\dom\times \dom}}&\le  \kllevel^{1/2} \expect{\mathcal{E}_{(h;M)}} \lesssim  
\kllevel^{1/2}  \rho^{\frac{1}{2}}_{h}(\nsamples) \lambda_{\max}\left(\massmatrix\right) .
\end{align}
Next, we define the set in which the \cref{ass:spectralgap} is violated as bad, and the good set simply as the complement of the bad set, i.e.
\begin{align*}
M_{\text{bad}}:=\left\{\!\vec{\om} \in \Omega :  \min_{1\le \kllevelrun \le \kllevel }\delta_{\kllevelrun}< 4
\mathcal{E}_{(h;M)}\!\right\} \text{ and }
M_{\text{good}}:=\left\{\! \vec{\om} \in \Omega :  \min_{1\le \kllevelrun \le \kllevel }\delta_{\kllevelrun}\ge 4
 \mathcal{E}_{(h;M)}\! \right\}.
\end{align*}
We notice that \cref{prop:spectralgap} yields 
$\left| M_{\text{bad}}\right| \le 2 \dimVh 5^{\taperingint}\exp\left(- \nsamples  \rho_1   H(\kllevel) \lambda_{\max}^{-2} \left(\massmatrix\right) \right),
$
where $\tau =\nsamples^{\frac{1}{2\alpha+1}}$.
Now we split the term $\expect{\normL{{E}_{3;j}}{\dom\times \dom}}$ as 
\begin{align*}
\expect{\normL{{E}_{3;j}}{\dom\times \dom}}=\int_{ M_{\text{bad}} } \normL{{E}_{3;j}}{\dom\times \dom} \, \mathrm{d}\prob(\vec{\om})+\int_{ M_{\text{good}} } \normL{{E}_{3;j}}{\dom\times \dom} \, \mathrm{d}\prob(\vec{\om}),
\end{align*}
and we also remember that $\normL{{E}_{3;2}}{\dom\times \dom}=\normL{{E}_{3;3}}{\dom\times \dom}$ as we saw in \eqref{e32e33}.
To treat the first integral concerning the bad set, we will show that the integrand is pointwise bounded by a deterministic quantity on $ M_{\text{bad}}$, i.e. that $\normL{{E}_{3;j}}{\dom\times \dom} \le C$ holds.
First we use \eqref{philhmortho} to get at least
$
\normL{\phi^{(h)}_{\kllevelrun}-\phi^{(h;\nsamples)}_{\kllevelrun}}{\dom}^2\le 2\normL{\phi^{(h)}_{\kllevelrun}}{\dom}^2+2\normL{\phi^{(h;\nsamples)}_{\kllevelrun}}{\dom}^2\le 4.
$
We also use the bound
$$
\left|\lambda^{(h;\nsamples)}_{\kllevelrun} \right|\le \left| \lambda^{(h)}_{\kllevelrun} -\lambda^{(h;\nsamples)}_{\kllevelrun} \right| +\left| \lambda^{(h)}_{\kllevelrun} \right| \le \mathcal{E}_{(h;M)}+\left| \lambda_{\kllevelrun} \right|.
$$
So we get for $j=2,3$ the bound
\begin{align*}
\normL{{E}_{3;j}}{\dom\times\dom}^2
&=\sum_{\kllevelrun=1}^{\kllevel} (\lambda^{(h;\nsamples)}_{\kllevelrun})^2 \normL{\phi^{(h)}_{\kllevelrun}-\phi^{(h;\nsamples)}_{\kllevelrun}}{\dom}^2 \le 4 \sum_{\kllevelrun=1}^{\kllevel} (\lambda^{(h;\nsamples)}_{\kllevelrun})^2 
\\&\le 4 \sum_{\kllevelrun=1}^{\kllevel} \left(  \mathcal{E}_{(h;M)} +\left| \lambda_{\kllevelrun} \right|\right)^2  \le 8  \sum_{\kllevelrun=1}^{\kllevel}  \mathcal{E}^2_{(h;M)}  + 8 \sum_{\kllevelrun=1}^{\kllevel}\left| \lambda_{\kllevelrun} \right|^2
\\&\le 8  \kllevel  \mathcal{E}^2_{(h;M)}  + 8\kllevel \left| \lambda_{1} \right|^2
\end{align*}
and
we get
$
\normL{{E}_{3;j}}{\dom\times\dom}\le 2\sqrt{2} \ \kllevel^{\frac{1}{2}}\left( \mathcal{E}_{(h;M)} +\lambda_{1}  \right).
$
Hence, on the bad set we get
\begin{align*}
\int_{\vec{\om} \in M_{\text{bad}} } \normL{{E}_{3;j}}{\dom\times \dom} \, \mathrm{d}\prob(\vec{\om})\le  2\sqrt{2}  \ \kllevel^{\frac{1}{2}} \int_{\vec{\om} \in M_{\text{bad}} }  \mathcal{E}_{(h;M)} \, \mathrm{d}\prob(\vec{\om}) +  2\sqrt{2}  \ \kllevel^{\frac{1}{2}} \left|  M_{\text{bad}} \right|  \lambda_{1}.
\end{align*}
For the first term, we use \eqref{errorexpec} to get
\begin{align*}
\int_{\vec{\om} \in M_{\text{bad}} }   \mathcal{E}_{(h;M)} \, \mathrm{d}\prob(\vec{\om})& \le \int_{\vec{\om} \in \Omega }    \mathcal{E}_{(h;M)} \, \mathrm{d}\prob(\vec{\om})
\lesssim  \rho^{\frac{1}{2}}_{h}(\nsamples)\lambda_{\max}\left(\massmatrix\right) 
\end{align*}
and obtain 
\begin{align*}
\int_{\vec{\om} \in M_{\text{bad}} } \normL{{E}_{3;j}}{\dom\times \dom} \, \mathrm{d}\prob(\vec{\om})\lesssim 
2 \sqrt{2}L^{\frac{1}{2}} \lambda_{\max}\left(\massmatrix\right) 
\rho^{\frac{1}{2}}_{h}(\nsamples)+2\sqrt{2} \left|  M_{\text{bad}} \right| L^{\frac{1}{2}} \lambda_1.
\end{align*}
We are left with estimating the integral of $\normL{E_{3}}{\dom\times\dom}$ over the set $M_{\text{good}}$. 
For this we use the bound \eqref{E3error}, which is only valid on the good set $ M_{\text{good}}$. Now recall
\begin{align*}
\normL{{E}_{3}}{\dom\times\dom} \le
 \mathcal{E}_{(h;M)} \left( \kllevel^{\frac{1}{2}}+\sqrt{2}C \kllevel^{\frac{1}{2}} + 4\sqrt{2} C G(\kllevel)\right) \quad \text{on }M_{\text{good}}.
\end{align*}
Then we get 
\begin{align*}
\int_{M_{\text{good}} } \normL{E_{3}}{\dom\times\dom} \, \mathrm{d}\prob (\vec{\om}) & 
\le \left( \kllevel^{\frac{1}{2}}+\sqrt{2}C \kllevel^{\frac{1}{2}} + 4\sqrt{2} C G(\kllevel)\right) \int_{M_{\text{good}}}  
\mathcal{E}_{(h;M)}\, \mathrm{d}\prob (\vec{\om}) 
\\&\le \left( \kllevel^{\frac{1}{2}}+\sqrt{2}C \kllevel^{\frac{1}{2}} + 4\sqrt{2} C G(\kllevel)\right)\expect{ 
\mathcal{E}_{(h;M)} }
\\& \lesssim \left( \kllevel^{\frac{1}{2}}+\sqrt{2}C \kllevel^{\frac{1}{2}} + 4\sqrt{2} C G(\kllevel)\right) \rho^{\frac{1}{2}}_{h}(\nsamples)\lambda_{\max}\left(\massmatrix\right) .
\end{align*}
With \cref{prop:spectralgap} and the notation from \eqref{tilderhoM} this completes the proof.
\end{proof}

\section{Discussion}
It remains to put the result of \cref{thm:final2} into context. To do this, let us consider an error bound 
$$\expect{\| R-R^{(L;h;M)}\|_{L^{2}(\dom \times \dom )}} \leq c \varepsilon$$ 
with a prescribed, fixed accuracy $\varepsilon >0$ and a small constant $c$. The question then is: What are the conditions on the discretization parameters $L,h,M$ to achieve this goal? 
For simplicity, we restrict ourselves to the situation $\lambda_{\max}\left(\massmatrix\right) \approx \lambda_{\min}\left(\massmatrix\right) \approx 1$, which is satisfied, for example, for any orthonormal basis.\footnote{For the nodal basis we would obtain, after proper scaling,  $\lambda_{\max}\left(\massmatrix\right) \approx \lambda_{\min}\left(\massmatrix\right) \approx h^d$. The resulting discussion for this case is left to the reader.}
Furthermore, we assume that the smoothness assumption \eqref{A:11} holds with $s > d/2$.

To demonstrate the coupling, we consider a situation where $h$ can be reasonably small and $M$ is sufficiently large. This choice naturally affects the tapering estimator.
We set an accuracy $\varepsilon>0$ and consider the case $\dimVh > \nsamples^{\frac{1}{2\alpha+1}}$, $\nsamples^{-\frac{2\alpha}{2\alpha+1}} \le d \log(h^{-1})\nsamples^{-1}$. 
Then the following choices give the sufficient conditions
\begin{align*}
L_{\varepsilon} &= \lceil \varepsilon^{-\frac{2d}{4s+d}} \rceil,\\
\nsamples_{\varepsilon} &\gtrsim \max\left\{\tilde{\nsamples}_{\varepsilon},\lceil \varepsilon^{-\frac{2d}{4s+d}} \rceil^{\frac{2(2s+d)\beta+2sd\gamma}{sd} } \varepsilon^{-2} \right\},\\
 \left(\kllevel_{\varepsilon}^{\frac{1}{2}}\varepsilon^{-1}\exp\left(- \nsamples_{\varepsilon}  \rho_1 H(\kllevel_{\varepsilon}) \right)\right)^{\frac{1}{d}}&\lesssim  h_{\varepsilon} \lesssim  \min\left\{H^{\frac{1}{4s}}(\kllevel_{\varepsilon}) \lambda^{\frac{1}{2s}}_{\kllevel_{\varepsilon}+1},\lambda^{\frac{1}{s}}_{\kllevel_{\varepsilon}}, \nsamples_{\varepsilon}^{-\frac{1}{d(2\alpha+1)}} , h_0
\right\},
\end{align*}
to achieve a total error of size $\varepsilon$. This is formalized in \cref{cor:appendixcor} in the appendix.

In general, our estimates involve the values of $G(\kllevel)$ and $H(\kllevel)$, i.e. the spectral properties of the unknown kernel $R$. These values are not  easy to determine. However, in the case of Brownian motion, we were able to derive precise values in \eqref{eq:gl:asymptotic} and \eqref{eq:Hlasymptotic}. 
Finally, let us consider the simplest case of univariate Brownian motion and discuss the tapering estimator in this situation.
Precisely,  in the univariate Brownian motion situation, we get the following parameter choices:
\begin{align*}
    L_{\varepsilon} & \sim \lceil \varepsilon^{-\frac{2}{3}} \rceil \\
    \nsamples_{\varepsilon} &\gtrsim\max\left\{\lceil \frac{1}{\rho_1 H(\kllevel_{\varepsilon})} W \left( -\rho_1 H(\kllevel_{\varepsilon}) \left( \kllevel_{\varepsilon}^{-\frac{1}{2}}\varepsilon \right) \right) \rceil,\lceil \varepsilon^{-\frac{2}{3}} \rceil^{8\beta+3} \varepsilon^{-2} \right\}\\
    h_{\varepsilon}&\sim\min\left\{ \lceil \varepsilon^{\frac{10}{3}} \rceil , \nsamples_{\varepsilon}^{-1},h_{0} \right\},
\end{align*}
where $W$ is the product logarithm function. For a derivation, see \cref{cor:appendixcor}.

 Finally, we should note the cost of our resulting approximation algorithm with respect to $\kllevel, h,\nsamples$. The numerical cost does not depend directly on $\kllevel$, since $\kllevel$ is implicitly included in $h_{\varepsilon}$  as $\kllevel \le \dimVh \sim h^{-d}$. Furthermore, it involves $\mathcal{O}(\nsamples h^{-2d})$ operations to assemble the $\nsamples$  associated discrete eigensystem \eqref{genalizedeigenvalue3-estimator},  and it involves $\mathcal{O}(h^{-3d})$ operations to solve the eigensystem in a naive direct way. Note at this point that there are faster approximate solution techniques, such as multipole or algebraic multigrid methods, with substantially reduced cost to tackle the task of eigensystem solution.

A final analysis of the tradeoff between cost and accuracy, and thus the corresponding $\epsilon$-complexity of our approach, can be easily done based on our results above and is left to the reader.

\section{Concluding remarks}\label{sec:conclusion}
We have discussed the problem of approximating the covariance of a Gaussian random field from a finite number of discretized observations. To this end, we coupled recent sharp estimates of the eigenvalue decay of continuous covariance operators with optimal statistical (tapering) estimators for covariance matrices. The combination of these techniques included a finite element discretization, which made all operators finite-rank  and made our approach feasible.

We provided new and sharp error estimates in  expectation for the reconstruction of the full covariance operator, taking  into account the number of samples, the finite element discretization, and the truncation of the Karhunen-Lo\`{e}ve expansion of the covariance operator and thus its regularity.

\section*{Acknowledgments}
The author MG was supported in part by the Hausdorff Center for Mathematics in Bonn and the Sonderforschungsbereich 1060 {\em The Mathematics of Emergent Effects} funded by the Deutsche Forschungsgemeinschaft.
GL acknowledges support from the Royal Society through the Newton International Fellowship. Parts of a preliminary version of this work were obtained during her visit to IPAM in the Long Term Program: Computational Issues in Oil Field Applications.
CR thanks the Institute for Numerical Simulation for its hospitality.

\bibliographystyle{siamplain}


\section*{Appendix}
\begin{corollary}\label{cor:appendixcor}
We set an accuracy $\varepsilon>0$ and choose $L_{\varepsilon} = \lceil \varepsilon^{-\frac{2d}{4s+d}} \rceil $.
Then we select $\nsamples_{\varepsilon}$ according to \eqref{finalMeps},  \eqref{finalMeps2} and \eqref{finalheps3}. Next, we choose $h_{\varepsilon}$ as in \eqref{finalheps} or as the maximal value of the potential intervals in \eqref{finalheps2} and \eqref{finalheps3}.
Then, in all possible cases, the error bound 
$\expect{\norm{{R}-R^{(\kllevel;h;\nsamples)}}_{L^{2}(\dom\times\dom)}} \le 3 \varepsilon$
holds.
\end{corollary}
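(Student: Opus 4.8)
The plan is to start from the three–term estimate of \cref{thm:final2} and to select the discretization parameters so that each summand is individually bounded by $\varepsilon$, which then yields the total bound $3\varepsilon$. Following the discussion I work in the normalized regime $\lambda_{\max}\left(\massmatrix\right)\approx\lambda_{\min}\left(\massmatrix\right)\approx 1$, so the mass–matrix factors are harmless. The first summand $\kllevel^{-\frac{2s}{d}-\frac12}$ is purely deterministic; since $\frac{2s}{d}+\frac12=\frac{4s+d}{2d}$, the inequality $\kllevel^{-\frac{2s}{d}-\frac12}\le\varepsilon$ is equivalent to $\kllevel\ge\varepsilon^{-\frac{2d}{4s+d}}$, so the choice $\kllevel_\varepsilon=\lceil\varepsilon^{-\frac{2d}{4s+d}}\rceil$ controls it and fixes $L$ once and for all.

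With $\kllevel_\varepsilon$ fixed I would next control the statistical term $\left(\kllevel^{\frac12}+G(\kllevel)\right)\tilde{\rho}^{\frac12}_{h}(\nsamples)$. Inserting the asymptotics of $\tilde{\rho}_{h}(\nsamples)$ from \eqref{tilderhoM} — in the stated subregime $\nsamples^{-\frac{2\alpha}{2\alpha+1}}\le d\log(h^{-1})\nsamples^{-1}$ the logarithmic sampling term dominates — together with the polynomial growth of $G$ (encoded by the exponents $\beta,\gamma$), the requirement $\left(\kllevel_\varepsilon^{\frac12}+G(\kllevel_\varepsilon)\right)\tilde{\rho}^{\frac12}_{h}(\nsamples)\le\varepsilon$ becomes a lower bound on $\nsamples$ that scales like $\varepsilon^{-2}$ times the appropriate power of $\kllevel_\varepsilon$; this is precisely the polynomial argument of the maximum defining $\nsamples_\varepsilon$.

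The coupling step is the exponential bad–set term $\kllevel^{\frac12}h^{-d}\exp\!\left(-\nsamples\rho_1 H(\kllevel)\right)$. Solving $\kllevel_\varepsilon^{\frac12}h^{-d}\exp\!\left(-\nsamples\rho_1 H(\kllevel_\varepsilon)\right)\le\varepsilon$ for $h$ produces exactly the stated lower bound $h_\varepsilon\gtrsim\left(\kllevel_\varepsilon^{\frac12}\varepsilon^{-1}\exp\!\left(-\nsamples_\varepsilon\rho_1 H(\kllevel_\varepsilon)\right)\right)^{\frac1d}$. The competing upper bounds on $h_\varepsilon$ come from the admissibility requirements: the first part of \eqref{condspectralgapfinal1} forces $h^{2s}\lesssim H^{\frac12}(\kllevel_\varepsilon)\lambda_{\kllevel_\varepsilon+1}$ (using $\min_{\kllevelrun}\delta_{\kllevelrun}\sim H^{\frac12}(\kllevel)$ from \eqref{HL}), condition \eqref{condhL} forces $C_2h^{s}\le\lambda_{\kllevel_\varepsilon}$, the tapering regime $\dimVh\ge\nsamples^{\frac{1}{2\alpha+1}}$ with $\dimVh\sim h^{-d}$ forces $h\lesssim\nsamples_\varepsilon^{-\frac{1}{d(2\alpha+1)}}$, and finally $h\le h_0$. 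Together these give the four terms in the upper cap of the displayed interval, while the second part of \eqref{condspectralgapfinal1} contributes a further lower bound on $\nsamples$ through $H(\kllevel_\varepsilon)$.

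The main obstacle is then to guarantee that this admissible interval for $h_\varepsilon$ is nonempty, i.e. that the exponential lower bound drops below the polynomial upper cap. Since the lower bound decreases only through the factor $\exp\!\left(-\nsamples\rho_1 H(\kllevel_\varepsilon)\right)$, this is achievable only by taking $\nsamples$ large enough, and solving the resulting transcendental inequality in $\nsamples$ is what defines the additional requirement $\tilde{\nsamples}_\varepsilon$; in the univariate Brownian case it can be written in closed form via the Lambert $W$ function as in the discussion. Taking $\nsamples_\varepsilon$ to be the maximum of $\tilde{\nsamples}_\varepsilon$ and the variance bound from the second step, and then choosing $h_\varepsilon$ as the maximal admissible value in the now–nonempty interval, simultaneously makes all three terms $\le\varepsilon$ and verifies the hypotheses of \cref{prop:spectralgap}, so that \cref{thm:final2} applies and gives $\expect{\norm{R-R^{(\kllevel;h;\nsamples)}}_{L^{2}(\dom\times\dom)}}\le 3\varepsilon$.
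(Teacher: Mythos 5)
Your argument reproduces only one of the three cases that the corollary claims to cover, and that is a genuine gap. The quantity $\tilde{\rho}_{h}(\nsamples)$ in \eqref{tilderhoM} is defined by cases, and the paper's proof is an exhaustive case analysis: (i) the regime $\dimVh < \nsamples^{\frac{1}{2\alpha+1}}$, where the raw sample covariance \eqref{astest2} is used, $\tilde{\rho}_{h}(\nsamples)\lesssim h^{-d}\nsamples^{-1}$, the regime itself imposes a \emph{lower} bound $h \gtrsim \nsamples^{-\frac{1}{d(2\alpha+1)}}$ rather than an upper one, and the sufficient sample size scales like $\nsamples_{\varepsilon}\gtrsim \varepsilon^{-\frac{2\alpha+1}{\alpha}}\kllevel_{\varepsilon}^{\gamma\frac{2\alpha+1}{\alpha}}$, leading to \eqref{finalMeps} and \eqref{finalheps}; (ii) the regime $\dimVh \ge \nsamples^{\frac{1}{2\alpha+1}}$ with $\nsamples^{-\frac{2\alpha}{2\alpha+1}} \le d\log(h^{-1})\nsamples^{-1}$, which is the only case you treat, leading to \eqref{finalMeps2} and \eqref{finalheps2}; and (iii) the regime $\dimVh \ge \nsamples^{\frac{1}{2\alpha+1}}$ with $\nsamples^{-\frac{2\alpha}{2\alpha+1}} \ge d\log(h^{-1})\nsamples^{-1}$, where $\tilde{\rho}_{h}(\nsamples)\lesssim \nsamples^{-\frac{2\alpha}{2\alpha+1}}$ but the defining inequality of the regime forces the additional lower bound $\exp\left(-d^{-1}\nsamples^{\frac{1}{2\alpha+1}}\right)\lesssim h$, which in turn requires the extra quantities $\hat{\nsamples}_{\varepsilon}$ in \eqref{hatn} and $\nsamples^{\prime}_{\varepsilon}$ in \eqref{primen} to keep the interval for $h_{\varepsilon}$ nonempty; this yields \eqref{finalMeps3} and \eqref{finalheps3}. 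The corollary's statement explicitly invokes all three sets of conditions and asserts the bound ``in all possible cases,'' so omitting (i) and (iii) leaves the claim unproved: these cases are not cosmetic variants of (ii), since the direction of the $h$--$\nsamples$ coupling flips and the required scaling of $\nsamples_{\varepsilon}$ in $\varepsilon$ is different ($\varepsilon^{-\frac{2\alpha+1}{\alpha}}$ versus $\varepsilon^{-2}$), and case (iii) needs a nonemptiness argument beyond the $\tilde{\nsamples}_{\varepsilon}$ you construct.

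A secondary inaccuracy: you attribute both exponents $\beta$ and $\gamma$ to ``the polynomial growth of $G$.'' In the paper only $\gamma$ plays that role, via $\kllevel^{\frac{1}{2}}+G(\kllevel)\lesssim \kllevel^{\gamma}$ in \eqref{defgamma}; the exponent $\beta$ arises from the separate device of dominating the logarithm, $\log(h_{\varepsilon}^{-1})^{\frac{1}{2}}\le h_{\varepsilon}^{-\beta}$ for $h_{\varepsilon}\le h_{\beta}$, which is then converted into a power of $\kllevel_{\varepsilon}$ through the admissibility condition $C_2 h^{s}\le \lambda_{\kllevel}$ of \eqref{condhL} and the eigenvalue decay. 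Without this step the logarithmic factor in case (ii) cannot be absorbed into the polynomial bound on $\nsamples_{\varepsilon}$, so you should make it explicit rather than fold it into the growth of $G$.
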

\begin{proof}
For the first term in our error bound \eqref{eq:final-3} we get from $L ^{-2s/d -1/2} \stackrel{!}{\le}  \varepsilon$ directly the condition $L \ge \varepsilon^{-2d/(4s+d)}$.
We also note that choosing a much larger $\kllevel$ will destroy the balance of the error contributions. Therefore, we will assume
\begin{align}\label{aaa}
L_{\varepsilon} := \lceil \varepsilon^{-\frac{2d}{4s+d}} \rceil.
\end{align}
The coupling \eqref{aaa} influences the choices for $h$ in terms of $\kllevel$ via the 
\cref{prop:spectralgap}, i.e. \eqref{condspectralgapfinal1} and the relation \eqref{condhL}. As a sufficient condition on $h$ we get
\begin{align}\label{finalcondhL}
h^{2s} \lesssim \min\left\{H^{\frac{1}{2}}(\kllevel) \lambda_{\kllevel+1},\lambda^2_{\kllevel} \right\} \lesssim  \min \left\{ \delta_{\kllevelrun} \lambda_{\kllevel+1},\lambda^{2}_{\kllevel}  \ : \ 1\le \kllevelrun \le \kllevel \right\}
\end{align}
with the notation \eqref{HL}. 
For the second error term we now assume that $G(\kllevel)=\kllevel^{\tilde{\gamma }}$
for some $\tilde{\gamma} \in \R$. So  
\begin{align}\label{defgamma}
\kllevel^{\frac{1}{2}}+G(\kllevel)\lesssim \kllevel^{\gamma}, \quad \text{with} \quad \gamma:=\max\{\frac{1}{2},\tilde{\gamma}\}.
\end{align}
Consequently, we have $(\kllevel^{\frac{1}{2}}+G(\kllevel)) \tilde{\rho}^{\frac{1}{2}}_{h}(\nsamples) \lesssim \kllevel^{\gamma} \tilde{\rho}^{\frac{1}{2}}_{h}(\nsamples) $. At this point we note that the clauses in the definition of $\tilde{\rho}_h(M)$ in \eqref{tilderhoM} require a case distinction.

In the case $\dimVh < \nsamples^{\frac{1}{2\alpha+1}}$ with  $\dimVh =s^{d} h^{-d}$, we get $h > s \nsamples^{-\frac{1}{d(2\alpha+1)}}$. So with \eqref{finalcondhL} we have the following inequality for $h$:
\begin{align}\label{firstchain}
\nsamples_{\varepsilon}^{-\frac{1}{d(2\alpha+1)}} \lesssim h_{\varepsilon} \lesssim  \min\left\{H^{\frac{1}{4s}}(\kllevel_{\varepsilon}) \lambda^{\frac{1}{2s}}_{\kllevel_{\varepsilon}+1},\lambda^{\frac{1}{s}}_{\kllevel_{\varepsilon}} \right\}.
\end{align}
We also get from \eqref{tilderhoM} the bound $\tilde{\rho}_h(M)\lesssim h^{-d} \nsamples^{-1}$. We therefore infer the inequality
	$(\kllevel_{\varepsilon}^{\frac{1}{2}}+G(\kllevel_{\varepsilon})) \tilde{\rho}^{\frac{1}{2}}_{h_{\varepsilon}}(\nsamples_{\varepsilon})\lesssim  \kllevel_{\varepsilon}^{\gamma} \nsamples_{\varepsilon}^{\frac {1}{2(2\alpha+1)}} \nsamples_{\varepsilon}^{-\frac{1}{2}} = 
	\nsamples_{\varepsilon}^{-\frac{\alpha}{2\alpha+1}}\kllevel_{\varepsilon}^{\gamma}$.
  This gives us the sufficient condition 
 \begin{align}\label{firstsuffcond}
\nsamples_{\varepsilon}^{-\frac{\alpha}{2\alpha+1}}\kllevel_{\varepsilon}^{\gamma}
	 \stackrel{!}{\le} \varepsilon 
\end{align}  
which implies an error bound of size $\varepsilon$. 
From this condition we want to derive a condition on $\nsamples_{\varepsilon}$.
The sufficient condition \eqref{firstsuffcond} implies 
$\nsamples_{\varepsilon}^{-\frac{\alpha}{2\alpha+1}}\kllevel_{\varepsilon}^{\gamma}
	 \le \varepsilon \Leftrightarrow  \nsamples_{\varepsilon} \ge \varepsilon^{-\frac{2\alpha+1}{\alpha}} \kllevel_{\varepsilon}^{\gamma\frac {2\alpha+1}{\alpha} }$,
i.e. we have a lower bound on $\nsamples_{\varepsilon}$. This means that we can satisfy \eqref{firstchain} by making  $\nsamples_{\varepsilon}$ sufficiently large.
Specifically, we have 
\begin{align}\label{condM1}
	\nsamples_{\varepsilon} \gtrsim \max\left\{ \varepsilon^{-\frac{2\alpha+1}{\alpha}} \kllevel_{\varepsilon}^{\gamma\frac {2\alpha+1}{\alpha} },\left( \min\left\{H^{\frac{1}{4s}}(\kllevel_{\varepsilon}) \lambda^{\frac{1}{2s}}_{\kllevel_{\varepsilon}+1},\lambda^{\frac{1}{s}}_{\kllevel_{\varepsilon}} \right\} \right)^{-d(2\alpha+1)}  \right\}.
\end{align}
Furthermore,  for the third error contribution we have the condition
\begin{align*}
 \kllevel_{\varepsilon}^{\frac{1}{2}} h_{\varepsilon}^{-d} \exp\left(- \nsamples_{\varepsilon}  \rho_1 H(\kllevel_{\varepsilon}) \right)\stackrel{!}{\le} \varepsilon \Leftrightarrow  \exp\left(- \nsamples_{\varepsilon}  \rho_1 H(\kllevel_{\varepsilon}) \right) \le  \varepsilon \kllevel_{\varepsilon}^{-\frac{1}{2}} h_{\varepsilon}^{d}.
 \end{align*}
We use \eqref{firstchain} and \eqref{aaa} to observe 
$\varepsilon \kllevel_{\varepsilon}^{-\frac{1}{2}} \nsamples_{\varepsilon}^{-\frac{1}{2\alpha+1}}=\varepsilon \lceil \varepsilon^{-\frac{2d}{4s+d}} \rceil^{-\frac{1}{2}} \nsamples_{\varepsilon}^{-\frac{1}{2\alpha+1}}	 \le \varepsilon \kllevel_{\varepsilon}^{-\frac{1}{2}} h_{\varepsilon}^{d}$.
Now we define $\bar{\nsamples}_{\varepsilon} \in \N$ by
\begin{align}\label{barn}
	\bar{\nsamples}_{\varepsilon}:=\arg\min\left\{M \in \N :  \exp\left(- M  \rho_1 H(\kllevel_{\varepsilon})\right) \le \varepsilon \lceil \varepsilon^{-\frac{2d}{4s+d}} \rceil^{-\frac{1}{2}} M^{-\frac{1}{2\alpha+1}} \right\}.
\end{align}
In summary, to ensure an overall error bound of size $3\varepsilon$ in the case $\dimVh < \nsamples^{\frac{1}{2\alpha+1}}$, this gives the sufficient conditions 
\begin{align}
\label{finalLeps}
L_{\varepsilon} &= \lceil \varepsilon^{-\frac{2d}{4s+d}} \rceil,\\
\label{finalMeps}
 \nsamples_{\varepsilon} &\gtrsim \max\left\{\bar{\nsamples}_{\varepsilon},  \varepsilon^{-\frac{2\alpha+1}{\alpha}} \kllevel_{\varepsilon}^{\gamma\frac {2\alpha+1}{\alpha} },\left( \min\left\{H^{\frac{1}{4s}}(\kllevel_{\varepsilon}) \lambda^{\frac{1}{2s}}_{\kllevel_{\varepsilon}+1},\lambda^{\frac{1}{s}}_{\kllevel_{\varepsilon}} \right\} \right)^{-d(2\alpha+1)}\right\},\\
\label{finalheps}
h_{\varepsilon} &\sim \min\left\{ \nsamples_{\varepsilon}^{-\frac{1}{d(2\alpha+1)}},h_0\right\},
\end{align}
where $h_0$ comes from \cref{prop:babuska}.

Now we turn to the second clause in the definition of $\tilde{\rho}_h(M)$ in \eqref{tilderhoM}, i.e. we consider the case $\dimVh \ge \nsamples^{\frac{1}{2\alpha+1}}$ with $\dimVh= s^{d}h^{-d}$. Then we get $h \le s  \nsamples^{-\frac{1}{d(2\alpha+1)}}$ and the inequalities \eqref{firstchain} change to 
\begin{align}\label{secondchain}
 h_{\varepsilon} \lesssim  \min\left\{H^{\frac{1}{4s}}(\kllevel_{\varepsilon}) \lambda^{\frac{1}{2s}}_{\kllevel_{\varepsilon}+1},\lambda^{\frac{1}{s}}_{\kllevel_{\varepsilon}}, \nsamples_{\varepsilon}^{-\frac{1}{d(2\alpha+1)}}  \right\},
\end{align} 
i.e. we have no lower bound on $h_{\varepsilon}$ at this point.
Recall \eqref{tilderhoM}, i.e. 
$\tilde{\rho}_{h}(\nsamples)=\nsamples^{-\frac{2\alpha}{2\alpha+1}} +d \log(h^{-1})\nsamples^{-1}$.
Now we distinguish whether $\nsamples^{-\frac{2\alpha}{2\alpha+1}} \le d \log(h^{-1})\nsamples^{-1}$ or  whether $\nsamples^{-\frac{2\alpha}{2\alpha+1}} \ge d \log(h^{-1})\nsamples^{-1}$ holds.
Furthermore, we observe that for all $ \beta\ge 0$ there exists a $h_{\beta}>0$ such that
 $\log(h^{-1})^{\frac{1}{2}} \le h^{-\beta}$ 
 for all  $h\le h_{\beta}$.
So for a fixed $\beta >0$ we get the chain of inequalities with $\tilde{\rho}_{h}(\nsamples_{\varepsilon})=\nsamples_{\varepsilon}^{-\frac{2\alpha}{2\alpha+1}} +d \log(h_{\varepsilon}^{-1})\nsamples_{\varepsilon}^{-1}\le 2 d \log(h_{\varepsilon}^{-1})\nsamples_{\varepsilon}^{-1}$.  Then we have 
\begin{align*}
(\kllevel_{\varepsilon}^{\frac{1}{2}}+G(\kllevel_{\varepsilon}))  \tilde{\rho}^{\frac{1}{2}}_{h_{\varepsilon}}(\nsamples_{\varepsilon})&\lesssim  \kllevel_{\varepsilon}^{\gamma}  d^{\frac{1}{2}} \log(h_{\varepsilon}^{-1})^{\frac{1}{2}}\nsamples^{-\frac{1}{2}} \lesssim \kllevel_{\varepsilon}^{\gamma} h_{\varepsilon}^{-\beta} \nsamples^{-\frac{1}{2}}
 \le  \kllevel_{\varepsilon}^{\gamma}  \nsamples_{\varepsilon}^{-\frac{1}{2}}\lambda_{\kllevel_{\varepsilon}}^{-\frac{\beta}{s}} \\&\le  \kllevel_{\varepsilon}^{\gamma}  \nsamples_{\varepsilon}^{-\frac{1}{2}} \kllevel_{\varepsilon}^{\frac{(2s+d)\beta}{ds}}= \nsamples_{\varepsilon}^{-\frac{1}{2}}\kllevel_{\varepsilon}^{\frac{(2s+d)\beta+sd\gamma}{ds} }.
  \end{align*}
Now, using \eqref{aaa}, we find the sufficient condition 
\begin{align*}
\nsamples_{\varepsilon}^{-\frac{1}{2}}\kllevel_{\varepsilon}^{\frac{(2s+d)\beta+sd\gamma}{sd} } \stackrel{!}{\le } \varepsilon \Leftrightarrow \nsamples_{\varepsilon} \ge \kllevel_{\varepsilon}^{\frac{2(2s+d)\beta+2sd\gamma}{sd} } \varepsilon^{-2}=\lceil \varepsilon^{-\frac{2d}{4s+d}} \rceil^{\frac{2(2s+d)\beta+2sd\gamma}{sd} } \varepsilon^{-2}
\end{align*}
to get an error bound of size $\varepsilon$ for some  $\beta >0$ and $h_{\varepsilon}\le h_{\beta}$.
For the third error contribution we have with \eqref{condspectralgapfinal1} the sufficient condition
\begin{align*}
\kllevel_{\varepsilon}^{\frac{1}{2}} h_{\varepsilon}^{-d} \exp\left(- \nsamples_{\varepsilon}  \rho_1 H(\kllevel_{\varepsilon}) \right) \stackrel{!}{\le} \varepsilon \Leftrightarrow \left(\kllevel_{\varepsilon}^{\frac{1}{2}}\varepsilon^{-1}\exp\left(- \nsamples_{\varepsilon}  \rho_1 H(\kllevel_{\varepsilon}) \right)\right)^{\frac{1}{d}} \le h_{\varepsilon}
\end{align*}
to ensure an error bound of size $\varepsilon$. 
We now define $\tilde{\nsamples}_{\varepsilon} \in \N$ by 
\begin{align}\label{tilden}
	\tilde{\nsamples}_{\varepsilon}:=\arg\min\left\{M \in \N :  \kllevel_{\varepsilon}^{\frac{1}{2}}\varepsilon^{-1}\exp\left(- M  \rho_1 H(\kllevel_{\varepsilon}) \right) \le M^{-\frac{1}{2\alpha+1}}\right\}.
\end{align}
In total, in the case $\dimVh > \nsamples^{\frac{1}{2\alpha+1}}$, $\nsamples^{-\frac{2\alpha}{2\alpha+1}} \le d \log(h^{-1})\nsamples^{-1}$, this gives the sufficient conditions
\begin{align}
\label{finalLeps2}
L_{\varepsilon} &= \lceil \varepsilon^{-\frac{2d}{4s+d}} \rceil,\\
\label{finalMeps2}
\nsamples_{\varepsilon} &\gtrsim \max\left\{\tilde{\nsamples}_{\varepsilon},\lceil \varepsilon^{-\frac{2d}{4s+d}} \rceil^{\frac{2(2s+d)\beta+2sd\gamma}{sd} } \varepsilon^{-2} \right\},\\
\label{finalheps2}
 \left(\kllevel_{\varepsilon}^{\frac{1}{2}}\varepsilon^{-1}\exp\left(- \nsamples_{\varepsilon}  \rho_1 H(\kllevel_{\varepsilon}) \right)\right)^{\frac{1}{d}}&\lesssim  h_{\varepsilon} \lesssim  \min\left\{H^{\frac{1}{4s}}(\kllevel_{\varepsilon}) \lambda^{\frac{1}{2s}}_{\kllevel_{\varepsilon}+1},\lambda^{\frac{1}{s}}_{\kllevel_{\varepsilon}}, \nsamples_{\varepsilon}^{-\frac{1}{d(2\alpha+1)}} , h_0
\right\},
\end{align}
to ensure a total error bound of size $3\varepsilon$, where $h_0$ comes from 
\cref{prop:babuska} and \eqref{tilden} ensures that the interval for $h_{\varepsilon}$ is non-trivial.
 
Finally, we consider the case $\dimVh > \nsamples_{\varepsilon}^{\frac{1}{2\alpha+1}}$ and $\nsamples_{\varepsilon}^{-\frac{2\alpha}{2\alpha+1}} \ge d \log(h_{\varepsilon}^{-1})\nsamples_{\varepsilon}^{-1}$. Since $1> \frac{2\alpha}{2\alpha+1}$, we observe that for fixed $h_{\varepsilon}$ this inequality can be satisfied if $\nsamples_{\varepsilon}$ is large enough. 
Furthermore, we observe 
\begin{align*}
\nsamples^{-\frac{2\alpha}{2\alpha+1}} \ge d \log(h^{-1})\nsamples^{-1} &\Leftrightarrow \nsamples^{\frac{1}{2\alpha+1}}  \ge d \log(h^{-1})\\& \Leftrightarrow -d^{-1}\nsamples^{\frac{1}{2\alpha+1}}\le \log(h) \Leftrightarrow \exp\left(-d^{-1}\nsamples^{\frac{1}{2\alpha+1}} \right) \le h.
\end{align*}
This gives a lower bound on $h_{\varepsilon}$ since
\begin{align}\label{thirdchainfirstcase}
 \exp\left(-d^{-1}\nsamples_{\varepsilon}^{\frac{1}{2\alpha+1}} \right) \lesssim  h_{\varepsilon} \lesssim  \min\left\{H^{\frac{1}{4s}}(\kllevel_{\varepsilon}) \lambda^{\frac{1}{2s}}_{\kllevel_{\varepsilon}+1},\lambda^{\frac{1}{s}}_{\kllevel_{\varepsilon}}, \nsamples_{\varepsilon}^{-\frac{1}{d(2\alpha+1)}}  \right\}.
\end{align}
Now we define $\hat{\nsamples}_{\varepsilon} \in \N$ by
\begin{align}\label{hatn}
	\hat{\nsamples}_{\varepsilon}:=\arg\min\left\{M \in \N : \exp\left(-d^{-1}M^{\frac{1}{2\alpha+1}} \right)  \le M^{-\frac{1}{d(2\alpha+1)}}  \right\},
\end{align}
and thus the condition \eqref{thirdchainfirstcase} can be satisfied for $\nsamples_{\varepsilon}\ge \hat{\nsamples}_{\varepsilon}$.
Since we now have $\tilde{\rho}_{h}(\nsamples)=\nsamples^{-\frac{2\alpha}{2\alpha+1}} +d \log(h^{-1})\nsamples^{-1} \le 2 \nsamples^{-\frac{2\alpha}{2\alpha+1}} $, 
we get the chain of inequalities
$(\kllevel_{\varepsilon}^{\frac{1}{2}}+G(\kllevel_{\varepsilon}))\tilde{\rho}^{\frac{1}{2}}_{h_{\varepsilon}}(\nsamples_{\varepsilon})\lesssim  \kllevel_{\varepsilon}^{\gamma} \nsamples_{\varepsilon}^{-\frac{\alpha}{2\alpha+1}}   \lesssim  \kllevel_{\varepsilon}^{\gamma}  \nsamples_{\varepsilon}^{-\frac{\alpha}{2\alpha+1}}$.
This gives us the sufficient condition
 $\kllevel_{\varepsilon}^{\gamma}  \nsamples_{\varepsilon}^{-\frac{\alpha}{2\alpha+1}}\stackrel{!}{\le } \varepsilon \Leftrightarrow \nsamples_{\varepsilon} \ge \kllevel_{\varepsilon}^{\frac{\gamma(2\alpha+1)}{\alpha}}\varepsilon^{-\frac{2\alpha+1}{\alpha}}$
to ensure an error bound of size $\varepsilon$.
For the third error contribution, we derive the inequality
\begin{align}\label{suff3}
 \kllevel_{\varepsilon}^{\frac{1}{2}} h_{\varepsilon}^{-d} \exp\left(- \nsamples_{\varepsilon}  \rho_1 H(\kllevel_{\varepsilon}) \right) \stackrel{!}{\le}\varepsilon \Leftrightarrow  \left(\kllevel_{\varepsilon}^{\frac{1}{2}}\varepsilon^{-1}\exp\left(- \nsamples_{\varepsilon}  \rho_1 H(\kllevel_{\varepsilon}) \right)\right)^{\frac{1}{d}} \le h_{\varepsilon}.
\end{align}
In addition, we can now define 
 $\nsamples^{\prime}_{\varepsilon} \in \N$ by
\begin{align}\label{primen}
	\nsamples^{\prime}_{\varepsilon}:=\arg\min\left\{M \in \N :   \left(\kllevel_{\varepsilon}^{\frac{1}{2}}\varepsilon^{-1}\exp\left(- \nsamples_{\varepsilon}  \rho_1 H(\kllevel_{\varepsilon}) \right)\right)^{\frac{1}{d}}\le  \exp\left(-d^{-1}\nsamples_{\varepsilon}^{\frac{1}{2\alpha+1}} \right) \right\}.
\end{align}
This finally gives the sufficient conditions
\begin{align}
\label{finalLeps3}
&L_{\varepsilon} = \lceil \varepsilon^{-\frac{2d}{4s+d}} \rceil,\\
\label{finalMeps3}
& \nsamples_{\varepsilon} \gtrsim \max\left\{\hat{\nsamples}_{\varepsilon},\nsamples^{\prime}_{\varepsilon},  \kllevel_{\varepsilon}^{\frac{\gamma(2\alpha+1)}{\alpha}}\varepsilon^{-\frac{2\alpha+1}{\alpha}},\left( \min\left\{H^{\frac{1}{4s}}(\kllevel_{\varepsilon}) \lambda^{\frac{1}{2s}}_{\kllevel_{\varepsilon}+1},\lambda^{\frac{1}{s}}_{\kllevel_{\varepsilon}} \right\} \right)^{-d(2\alpha+1)}\right\},\\
\label{finalheps3}
 &\left(\kllevel_{\varepsilon}^{\frac{1}{2}}\varepsilon^{-1}\exp\left(- \nsamples_{\varepsilon}  \rho_1 H(\kllevel_{\varepsilon}) \right)\right)^{\frac{1}{d}}\lesssim  h_{\varepsilon}  \\&\lesssim  \min\left\{H^{\frac{1}{4s}}(\kllevel_{\varepsilon}) \lambda^{\frac{1}{2s}}_{\kllevel_{\varepsilon}+1},\lambda^{\frac{1}{s}}_{\kllevel_{\varepsilon}}, \nsamples_{\varepsilon}^{-\frac{1}{d(2\alpha+1)}} , \exp\left(-d^{-1}\nsamples^{\frac{1}{2\alpha+1}} \right),h_0 \right\}\nonumber
\end{align}
for the case $\dimVh > \nsamples^{\frac{1}{2\alpha+1}}$, $\nsamples^{-\frac{2\alpha}{2\alpha+1}} \ge d \log(h^{-1})\nsamples^{-1}$, which ensure a total error bound of size $3\varepsilon$. Note that we always fix $L_{\varepsilon} $ first, and that we can always satisfy the conditions \eqref{finalMeps},  \eqref{finalMeps2} and \eqref{finalheps3} by choosing $\nsamples_{\varepsilon}$ large enough. 
This then fixes $h_{\varepsilon}$ in \eqref{finalheps} and fixes the potential intervals for $h_{\varepsilon}$ in \eqref{finalheps2} and \eqref{finalheps3}.
Of course, in practice we would always choose  $\nsamples_{\varepsilon}$ as small as possible and $h_{\varepsilon}$ as large as possible.
\end{proof}
Note at this point that a practical goal is to make $h_{\varepsilon}$ as large as possible and $\nsamples_{\varepsilon}$ as small as possible and still ensure an error level of size $3\varepsilon$.
Therefore, we advocate sticking to the second case, where a reasonable choice is possible as in  \eqref{finalheps2} and furthermore for $\nsamples_{\varepsilon}$ as in  \eqref{finalMeps2}. This naturally leads to the tapering estimator.

\subsection*{Parameter choices for Brownian motion}
The univariate Brownian motion (i.e. $d=1$) was given in \eqref{univariateBrownianMotion}, i.e. $R^{(1)}_{B}(x,x^{\prime}):=\min\{x,x^{\prime}\}$
with eigenvalues $\lambda_{R^{(1)}_{B}}(\kllevelrun) \sim \kllevelrun^{-2}$ and the constant from \eqref{defgamma} as $\gamma_{R^{(1)}_{B}}=\frac{3}{2}$. The spatial smoothness is $s=\frac{1}{2}-\delta$ for arbitrarily small $\delta>0$. First we observe for \eqref{finalLeps2} that
$
L_{\varepsilon} = \lceil \varepsilon^{-\frac{2d}{4s+d}} \rceil \sim \lceil \varepsilon^{-\frac{2}{3}} \rceil 
$
From the choice for $h_{\varepsilon}$ we have
$\nsamples_{\varepsilon} \gtrsim\max\left\{\tilde{\nsamples}_{\varepsilon},\lceil \varepsilon^{-\frac{2}{3}} \rceil^{8\beta+3} \varepsilon^{-2} \right\}$.
For the term $\tilde{\nsamples}_{\varepsilon}$, defined in \eqref{tilden},
we first note that solutions $x^{\star}$ of the equation
 $\kllevel_{\varepsilon}^{\frac{1}{2}}\varepsilon^{-1}\exp\left(- M  \rho_1 H(\kllevel_{\varepsilon}) \right) = M^{-\frac{1}{2\alpha+1}}$
can be written in terms of the \emph{product logarithm function} $W$. We have 
\begin{align*}
	x^{\star}= \frac{1}{(2\alpha+1)\rho_1 H(\kllevel_{\varepsilon})} W \left( -(2\alpha+1)\rho_1 H(\kllevel_{\varepsilon}) \left( \kllevel_{\varepsilon}^{-\frac{1}{2}}\varepsilon \right)^{(2\alpha+1)} \right)
\end{align*}
and due to monotonicity we have that $\tilde{\nsamples}_{\varepsilon}=\lceil x^{\star}\rceil$. Moreover, we observe that we can make the bound on $\nsamples_{\varepsilon}$ a bit larger by considering $\alpha=0$, i.e. we can make the bound independent of $\alpha$.

To get a bound on $h_{\varepsilon}$ from \eqref{finalheps2}, we consider the upper bound (since we want to make $h_{\varepsilon}$ as large as possible).
So we see that the term $ H^{\frac{1}{4s}}(\kllevel_{\varepsilon}) \lambda^{\frac{1}{2s}}_{\kllevel_{\varepsilon}+1} $ dominates the eigenvalue term $\lambda^{\frac{1}{s}}_{\kllevel_{\varepsilon}}$ and we get
$h_{\varepsilon}:=\min\left\{ \lceil \varepsilon^{\frac{10}{3}} \rceil , \nsamples_{\varepsilon}^{-1},h_{0} \right\}$.

The case of  higher dimensional Brownian motion could be done in a similar way. Note that $G$ is completely independent of the dimension, while $H$ decays exponentially with the dimension. The latter exponentially affects the choice for $h_{\epsilon}$  in \eqref{finalheps2} and for $M_{\epsilon}$ in \eqref{finalMeps2} for growing dimensions.

\end{document}